\titleformat{\section}[block]
{\normalfont \large\bfseries}
{\thesection}{1.2em}{\bfseries}
\titleformat{\subsection}[block]
{\normalfont \normalsize \bfseries}
{\thesubsection}{0.9em}{\bfseries}
\titlespacing{\paragraph}{%
  0pt}{
  0.5\baselineskip}{
  1em}
\def\th@plain{%
  \thm@notefont{}
  \itshape 
}
\def\th@definition{%
  \thm@notefont{}
  \normalfont 
}
\newtheorem{theorem}{Theorem}[section]
\newtheorem{corollary}[theorem]{Corollary}
\newtheorem{lemma}[theorem]{Lemma}
\newtheorem{proposition}[theorem]{Proposition}
\newtheorem{definition}[theorem]{Definition}
\numberwithin{equation}{section}
\newcommand{\RR}{\mathbb{R}}
\newcommand{\UU}{\mathbb{U}} 
\newcommand{\ind}[1]{\mathbbm{1}_{\left\{ #1 \right\} } } 
\newcommand{\dd}{\mathrm{d}} 
\newcommand{\EE}{\mathbb{E}} 
\newcommand{\PP}{\mathbb{P}} 
\newcommand{\cI}{\mathcal{I}} 
\newcommand{\cF}{\mathcal{F}}
\newcommand{\cG}{\mathcal{G}} 
\newcommand{\cN}{\mathcal{N}}
\newcommand{\bP}{\mathbf{P}} 
\newcommand{\bE}{\mathbf{E}}
\newcommand{\bq}{\mathbf{q}}
\newcommand{\ta}{\mathtt{a}} 
\newcommand{\diag}{\mathrm{diag}} 
\newcommand{\JJ}{J} 
\newcommand{\TOne}{\mathcal{T}_1} 
\title{\textsc{From multitype branching Brownian motions\\ 
 to branching Markov additive processes}}
\author{
Yutao Liang\footnote{University of Chinese Academy of Sciences \& State Key Laboratory of Mathematical Sciences, Academy of Mathematics and Systems Science, Chinese Academy of Sciences; {liangyutao@amss.ac.cn}}, 
Yan-Xia Ren\footnote{LMAM School of Mathematical Sciences \& Center for
	Statistical Science, Peking
	University; {yxren@math.pku.edu.cn}},
Quan Shi\footnote{State Key Laboratory of Mathematical Sciences, Academy of Mathematics and Systems Science, Chinese Academy of Sciences; {quan.shi@amss.ac.cn}}, 
Fan Yang\footnote{School of Mathematical Sciences, Beijing University of Posts and Telecommunications; {fan-yang@bupt.edu.cn} } }
\date{\today}
\begin{document}
\maketitle
\begin{abstract}
We study a class of multitype branching L\'evy processes, where particles move according to type-dependent Lévy processes, switch types via an irreducible Markov chain, and branch according to type-dependent laws. This framework generalizes multitype branching Brownian motions.

Using techniques of Markov additive processes, we develop a spine decomposition. This approach further enables us to prove convergence results for the additive martingales and derivative martingales, and establish the existence and uniqueness of travelling wave solutions to the corresponding multitype FKPP equations. 
In particular, applying our results to the on-off branching Brownian motion model resolves several open problems posed by Blath et al.\ (2025).
\end{abstract}
{\bf 2020 Mathematics Subject Classification:} 60J80, 60G51, 35K57. 
\\
{\bf Keywords:} branching Brownian motion, Markov additive process, spine decomposition, FKPP equation, travelling wave. 
\section{Introduction}\label{section: introduction}

\subsection{A motivating model: on-off branching Brownian motions}
The starting point of this work is an elegant two-type branching Brownian motion model introduced by Blath et al.\ \cite{BlaEtAl25}. Their model, so-called \emph{on-off branching Brownian motion} (on-off BBM), incorporates the biological concepts of \emph{dormancy} and \emph{seed banks}. In this  branching system, particles exist in either an \emph{active} or \emph{dormant} state, with each type following distinct branching rates, reproduction laws, and motion dynamics. The model also allows stochastic switching between the two states.
This framework provides a natural extension of the well-known (single-type) branching Brownian motion. As in the classical case, an on-off BBM  is intimately related to a two-type extension of Fisher–Kolmogorov–Petrovskii–Piskunov (FKPP) equation with the  dormancy feature. 
Through a careful and insightful analysis, \cite{BlaEtAl25} established two key results in the supercritical regime: the convergence of additive martingales and the existence of travelling waves. Moreover, the authors highlighted several open questions in \cite[Section 4]{BlaEtAl25}, 
including the following: 
\begin{itemize}
    \item The uniqueness of the travelling waves;
    \item A probabilistic representation of the travelling waves akin to the Lalley-Sellke \cite{LalleySellke87} construction;
    \item Convergence of martingales and the existence of travelling waves in the critical regime. 
\end{itemize}

A classical technique for addressing such questions is the spine decomposition, which is a seminal method  in branching process theory (see e.g.\ \cite{ShiBRW}). However, its application to the on-off BBMs is non-trivial.  Although the literature on single-type and multitype BBMs is vast, it usually assumes the underlying particle motion is identical for all types; this is crucially different from the on/off BBMs, where  the motions of particles are type-dependent. This seemingly minor extension introduces non-trivial technical difficulties. As Blath et al.\ note \cite{BlaEtAl25}, \emph{``the quadratic variation is truly probabilistic, making an application of the Girsanov Theorem difficult."} This inherent difficulty has thus posed a major obstacle to adapting the powerful spine decomposition technique.

One initial motivation for this work is to overcome this obstacle. We find that the essence of the problem is easier captured when we shift to a more general framework: branching particle systems with type-dependent Lévy processes. This perspective is naturally related to Markov additive processes (MAPs), which provide a clearer view of the underlying structure and the necessary mathematical tools to handle the heterogeneity. In this framework, we introduce a new spine decomposition. 
This allows us to provide a systematic approach to studying such branching systems and, in particular, resolve several open problems identified by Blath et al. \cite{BlaEtAl25} mentioned above.

\subsection{Multitype branching Lévy processes viewed as branching MAPs}
Our models are multitype branching L\'evy processes with a finite type space $\mathcal{I} = \{1, \ldots, \mathtt{d}\}$,  for some $\mathtt{d} \in \mathbb{N}$. Each particle  is assigned a type $i \in \mathcal{I}$ and moves in $\mathbb{R}$ according to a type-dependent L\'evy process. They randomly either branches, generating offspring that all inherit its current type, or  switches to a new type. All particles evolve independently of one another.
Specifically, the model is described as follows. 

\begin{itemize}
	\item(Movement) For each particle of type $i\in \cI$, its movement is governed by an $\mathbb{R}$-valued L\'evy process $(\chi_i(t), t\ge 0)$. For $\theta \ge 0$, when the exponential moment $\EE[ e^{-\theta \chi_i(1)}]$ is finite, we define the  \emph{Laplace exponent} $\phi_i$ such that 
    \begin{equation}
        \EE[ e^{-\theta \chi_i(t)}]  = \exp (\phi_i(\theta) t) <\infty, \qquad \forall t \ge 0. 
    \end{equation}
    
    The Laplace exponent $\phi_i$ is given by 
    \begin{equation}\label{eq:H-phi}
        \phi_i(\theta) = \frac{\sigma_i^2}{2} \theta^2 - \ta_i \theta +\int_{\RR} \left( e^{-\theta x} - 1 +x\theta \ind{|x|\le 1} \right) \Lambda_i (\dd x)<\infty, 
    \end{equation}
    with $\sigma_i^2\ge 0$, $\ta_i\in \RR$ and $\Lambda_i$ is a sigma-finite measure on $\RR\setminus \{0\}$. The  generator $\mathcal{A}_i f(x)$ is given by
    \begin{equation}\label{eqn:generator Ai}
        \mathcal{A}_i f(x) = \frac{\sigma_i^2}{2}  \partial_{xx} f(x) + \ta_i \partial_{x} f(x) + \int_{\mathbb{R}} \Big[  f(x+y) - f(x) - y  \ind{|y|<1}  \partial_{x} f(x) \Big] \,\Lambda_i (\dd y).
    \end{equation}
    We assume at least one of $\chi_i$ is non-trivial (non-constant).
    
	\item(Branching) At rate $\beta_i\ge 0$, a particle of type $i$ gives birth to a number of offspring according to $\mu_i:=(\mu_i (k), k\ge 0)$, and the children are all of type $i$ and located at the same place as the death point of the parent. 
    We suppose that each offspring has finite expectation
    \begin{equation}\label{eq:mu}
        m_i:= \sum_{k\ge 1} k \mu_i(k) \in [0,\infty), \qquad i\in \cI.  
    \end{equation}
    Note that, if $\beta_j=0$ for some $j\in \cI$, then a particle cannot split at type $j$ and the offspring law is meaningless; so in this case we just fix by convention $\mu_j(1)=1$. 

    \item(Switching types) 
    Each particle changes its type according to a continuous-time Markov chain $\Theta$ on $\cI$, with intensity matrix $Q = (q_{ij})_{i,j\in\cI}$, and possibly makes a jump at the time when it changes type. More precisely, at rate $q_{ij}\ge 0$,  a particle of type $i$  switches to the type $j\in \cI$, and at the same time makes a jump in space according to the law of a real-valued random variable $U_{ij}$, with convention that  $U_{ij}=0$ if $q_{ij} =0$, and that $U_{ii}=0$. 
    For $i\in \cI$, let $q_i = - q_{ii} = \sum_{j\ne i} q_{ij}$. When $\mathbb{E}[e^{-\theta U_{ij}}]$ is finite, we define 
    \begin{equation}\label{eq:H-G}
       G_{ij}(\theta) := \mathbb{E}[e^{-\theta U_{ij}}] <\infty, \mbox{ and } G(\theta):=(G_{ij}(\theta))_{i,j\in \cI}. 
    \end{equation} 
    We always suppose that
    \begin{equation}\label{eq:H-J}
        Q \text{ is irreducible}. 
    \end{equation}
    As the state space is finite, we deduce that $\Theta$ is positively recurrent and admits a unique invariant distribution denoted by $\pi = (\pi_i, i\in \cI)$.   
\end{itemize}

This model naturally  includes the multitype BBMs as prototypes. Processes combining Lévy behaviour with Markovian type-switching are known as the  Markov additive processes (MAPs), which provides a suitable framework for our analysis. Specifically, consider a c\`{a}dl\`{a}g process $(\chi(t))_{t\ge 0}$ on $\mathbb{R}$, and a right-continuous jump process $(\Theta(t))_{t\ge 0}$ on $\mathcal{I}$.  Assume that the joint process $(\chi,\Theta)$ is adapted to a filtration $(\mathcal{H}_t)_{t\ge 0}$ satisfying the usual conditions.

\begin{definition}[Markov additive process (MAP)]\label{def:MAP}
    We say that $(\chi(t), \Theta(t))_{t\ge 0}$ is a Markov additive process (MAP) if for all $s,t\ge 0$ and $i\in \mathcal{I}$, given $\{\Theta(t) = i\}$, the pair $(\chi(t+s)-\chi(t), \Theta(t+s))$ is independent of $\mathcal{H}_t$ and has the same law as $(\chi(s)-\chi(0), \Theta(s))$ given $\{\Theta(0) = i\}$.
\end{definition}

The theory of MAPs is well-established and has prominent applications in e.g.\ classical applied probabilistic
models for queues. 
We refer to \cite[Chapter XI]{Asmussen},  \cite[Chapter~2]{Iva2011} and \cite[Appendix]{DLK17} for detailed discussions.   
The following proposition is well-known, giving a standard representation for MAPs. 
\begin{proposition}\label{prop:rep of MAP}
    Let $(\chi(t), \Theta(t))_{t\ge0}$ be a MAP. Let $0=T_0<T_1<\dots$ be successive jump times of $\Theta$. For each $i,j\in \mathcal{I}$, there exist an i.i.d.\ sequence of random variables $(U_{ij}^{n}, n\ge 1)$, and an i.i.d.\ sequence of L\'{e}vy processes $(\chi_i^{n},n\ge 1)$, such that  these $\Theta,\chi_i^n, U_{ij}^n$ are independent, and that for $n\ge 0$, $t\in [T_n,T_{n+1}),$
    \[\chi(t) = \ind{n>0}\left(\chi(T_n-)+U_{ij}^{n}+\chi_j^{n}(t-T_n)\right),
    \]
    where $i=\Theta(T_n-)$ and $j = \Theta(T_n)$.
\end{proposition}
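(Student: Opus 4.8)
The plan is to read off the claimed structure directly from the defining property of a MAP (Definition \ref{def:MAP}) combined with the fact that between jumps of $\Theta$ the additive component must evolve as a homogeneous process that cannot depend on anything but its current type, hence as a Lévy process.

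First I would set up the jump times of $\Theta$. Since $\mathcal{I}$ is finite and $\Theta$ is a right-continuous pure-jump Markov process (by the MAP property restricted to the $\Theta$-coordinate, $\Theta$ is itself a Markov chain on $\mathcal{I}$), its successive jump times $0 = T_0 < T_1 < \cdots$ are well-defined and $T_n \to \infty$ a.s. On each stochastic interval $[T_n, T_{n+1})$ I would argue that, conditionally on $\Theta(T_n) = j$ and on $\mathcal{H}_{T_n}$, the increment process $(\chi(T_n + s) - \chi(T_n))_{0 \le s < T_{n+1}-T_n}$ has stationary independent increments: this follows by applying the MAP property at a sequence of times and using that $\Theta$ stays constant and equal to $j$ on this interval (so the conditioning event $\{\Theta(\cdot) = j\}$ is automatically satisfied), together with the strong Markov property at $T_n$ (here one invokes that $(\mathcal{H}_t)$ satisfies the usual conditions and that $T_n$ is a stopping time). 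Kolmogorov extension / the Lévy–Itô structure theorem then lets me realize this increment process as $\chi_j^n(\cdot)$ for a Lévy process $\chi_j^n$ whose law depends only on $j$; taking one such process for each type and each index $n$ and using the conditional independence across disjoint time intervals (again the MAP property, applied successively at $T_1, T_2, \dots$) gives the required i.i.d.-in-$n$, independent-across-types family, independent of $\Theta$.

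Next I would handle the jump at $T_n$ itself. Given $\Theta(T_n-) = i$ and $\Theta(T_n) = j$ with $i \ne j$, the quantity $\chi(T_n) - \chi(T_n-)$ is, by the MAP property applied across the jump and the strong Markov property, a random variable whose conditional law depends only on the pair $(i,j)$; I would call a random variable with this law $U_{ij}^n$, set $U_{ij}^n = 0$ when $q_{ij} = 0$ and $U_{ii}^n = 0$ by convention, and note that these are i.i.d.\ in $n$ and independent of everything previously constructed. Telescoping $\chi(t)$ over the jump times then yields, for $t \in [T_n, T_{n+1})$ with $i = \Theta(T_n-)$, $j = \Theta(T_n)$,
\[
\chi(t) = \chi(T_n-) + U_{ij}^n + \chi_j^n(t - T_n)
\]
for $n \ge 1$, and $\chi(t) = \chi_j^0(t)$ for $n = 0$ if we start at $\chi(0) = 0$ (absorbing any nonzero starting value into $\chi(T_0-)$ is vacuous, hence the indicator $\ind{n>0}$ in the statement), which is exactly the asserted representation.

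The main obstacle I anticipate is the careful justification that the increments on $[T_n, T_{n+1})$ are genuinely independent of $\mathcal{H}_{T_n}$ and have a law depending only on the type $j$ — in other words, promoting the fixed-time MAP property of Definition \ref{def:MAP} to a statement at the random times $T_n$. This requires a standard but slightly delicate strong-Markov argument: approximate $T_n$ by discrete-valued stopping times from above, apply the ordinary MAP property on each approximant, and pass to the limit using right-continuity of $(\chi, \Theta)$ and of the filtration. A secondary technical point is to ensure measurable selection of the conditional laws (of the Lévy increments and of the jumps $U_{ij}$) so that the i.i.d.\ sequences can be constructed on a common probability space jointly independent of $\Theta$; this is routine given the finiteness of $\mathcal{I}$ and can be done by an explicit enlargement of the probability space.
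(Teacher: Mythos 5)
Note first that the paper does not supply a proof of this proposition: it is stated as a standard representation result for MAPs and attributed to the cited references ([Asmussen, Ch.~XI], [Iva2011, Ch.~2], [DLK17, Appendix]), so there is no in-text argument against which to compare your proposal.

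Your outline follows the expected route (strong Markov at $T_n$, Lévy-type behaviour between switches, type-pair-dependent transitional jumps, enlargement of the probability space for the i.i.d.\ families), but there is one substantive gap worth flagging. You assert that, given $\Theta(T_n)=j$ and $\mathcal{H}_{T_n}$, the increment process $(\chi(T_n+s)-\chi(T_n))_{0\le s<T_{n+1}-T_n}$ has stationary independent increments and that the pieces assemble into a family independent of $\Theta$, but independence from $\Theta$ — in particular from the stay length $T_{n+1}-T_n$ — is not automatic from Definition~\ref{def:MAP}. That definition fixes the \emph{joint} law of the pair $(\chi(t+s)-\chi(t),\Theta(t+s))$ given $\{\Theta(t)=i\}$ and does not on its face factor it into an independent product; a priori the size of the $\chi$-increment could be correlated with whether and when $\Theta$ switches. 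The step that licenses the decoupling is the memoryless property of the holding time at $j$: writing $\sigma$ for the first switch time after $T_n$, for $s_1<\dots<s_m<t$ one has $\PP(\sigma>t\mid\mathcal{H}_{s_m})=e^{-q_j(t-s_m)}$ identically on $\{\Theta(s_m)=j,\,\sigma>s_m\}$, whence the conditional law of $(\chi(s_1),\dots,\chi(s_m))$ given $\{\sigma>t\}$ is independent of $t$ and the finite-dimensional laws are Kolmogorov-consistent, so they extend to a genuine Lévy process $\chi_j^n$ that is independent of $\sigma$. As written, your argument leans on the fixed-time MAP property plus the strong-Markov step but does not bring in this memorylessness/consistency argument, which is precisely what justifies the independence of the $\chi_i^n,U_{ij}^n$ from $\Theta$ that the proposition asserts; I would spell it out rather than folding it into "applying the MAP property at a sequence of times."
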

This representation has an intuitive interpretation:  the process $\Theta$ governs a time-dependent random environment.
When $\Theta$ is at state $i\in \cI$, the position $\chi$  evolves according to a copy of the L\'evy process $\chi_i$. Once $\Theta$ changes from $i$ to $j$, $\chi$ has an additional transitional jump $U_{ij}$. Then $\chi$ evolves according to a copy of $\chi_j$ until the next jump of $\Theta$. 
In this context, we say that $(\chi,\Theta)$ is a MAP \emph{associated with $((\chi_i)_{i\in \cI}, \Theta, (U_{ij})_{i,j\in \cI})$}, or equivalently, \emph{with characteristic triplet $((\phi_i)_{i\in \mathcal{I}}, Q, G)$}. For $\alpha\ge 0$, suppose that $\phi_i(\alpha)<\infty$, $G_{ij}(\alpha)<\infty$, $i,j\in \cI$.
Then it is well-known that, for $i,j\in \cI$, 
 \[
 \EE_{0,i}[e^{-\alpha \chi(t)}\ind{\Theta(t)=j}] = (e^{F(\alpha)t})_{ij},\qquad t\ge 0,
 \]
where the matrix exponent $F(\alpha)$ is given by 
\begin{equation}\label{eqn:MAP matrix exponent}
    F(\alpha) = \diag(\phi_i(\alpha))_{i\in \mathcal{I}}+Q\circ G(\alpha),
\end{equation}
where $Q\circ G(\alpha)$ denotes the entrywise matrix multiplication of $Q$ and $G(\alpha)$.

Therefore, we can  define our multitype branching L\'evy process model,   equivalently as a \emph{branching Markov additive process} on the space $\mathbb{R}\times \cI$ as follows. 
\begin{itemize}
    \item The spatial motion and type-switching are governed by a MAP $(\chi,\Theta)$ associated with $((\chi_i)_{i\in \cI}, \Theta, (U_{ij})_{i,j\in \cI})$; 
    \item When a particle is located at $(x,i)\in \mathbb{R}\times \cI$, it splits at rate $\beta_i$, giving birth to a random number of offsprings distributed  according to $\mu_i:=(\mu_i (k), k\ge 0)$, and the children are all initially located at the parent's position $(x,i)$. Each offspring particle evolves independently of the others. 
\end{itemize}

The construction of such a particle system may be carried out recursively in a genealogical manner, with the set of individuals indexed by the Ulam–Harris notation, thereby encoding the genealogy of the particles. More details are given in Section \ref{subsection_spine_decom}. 
Each particle indexed by $u$ is assigned with birth time $b_u$ (the global time) and lifetime $\eta_u$. 
For $t\ge 0$, let $\mathcal{N}_t$ be the collection of particles alive at time $t$, with $u\in \mathcal{N}_t$ if and only if $t\in [b_u, b_u +\eta_u)$), and $(X_u(t), \JJ_u(t))$ be the position and state of the particle $u\in \cN_t$. 
Denote by $\mathbf{P}_{x,i}$ the probability law of the particle systems with one initial particle of type $i\in \cI$ and position $x\in \RR$, and by $\mathbf{E}_{x,i}$ the expectation under $\mathbf{P}_{x,i}$.

\subsection{Main results}
We can now state our main results. We define
\[
dom := \{q\in \mathbb{R}\colon \phi_i(q)<\infty, G_{ij}(q)<\infty, \forall i,j\in \cI \}.
\]
Note that $0\in dom$. 
Let $\bar{\theta} := \sup dom\in [0,+\infty]$.
Then for each $\theta\in [0,\overline{\theta})$, we have $\phi_i(\theta)<\infty$ and $G_{ij}(\theta)<\infty$ for every $i,j\in\mathcal{I}$. 
The subsequent results primarily assume $\bar{\theta}>0$ and utilize positive values of $\theta$. If the domain of finiteness were confined to negative $\theta$, one could apply a sign change by considering $-X_u(t)$, thereby shifting the analysis back to the positive regime.

\begin{theorem}[Matrix exponent]\label{thm:matrix-exponent}
For $\theta \in [0, \bar{\theta})$ and $i,j\in \cI$, define 
	\begin{equation}
	M_{i,j}^{(\theta)}(t) := \bE_{0,i}\bigg[\sum_{u\in \cN_t} e^{-\theta X_u(t)} \ind{\JJ_u(t) = j} \bigg], \qquad t \ge 0.
\end{equation}
	Then for the matrix $M^{(\theta)}(t) := (M_{i,j}^{(\theta)}(t))_{i,j\in \mathcal{I}}$, we have 
	\begin{equation}
		M^{(\theta)}(t) = e^{t \mathcal{M}(\theta)}, \qquad t\ge 0,  
	\end{equation}
where the matrix exponent $\mathcal{M}(\theta)$ is given by
\begin{equation}\label{matric exponent}
    \mathcal{M}(\theta) = \diag (\phi_1(\theta), \ldots, \phi_{\mathtt{d}}(\theta)) + Q \circ G(\theta) 
	+ \diag (\beta_1 m_1-\beta_1, \ldots, \beta_{\mathtt{d}} m_{\mathtt{d}}-\beta_{\mathtt{d}}), 
\end{equation} 
where $Q \circ G(\theta)$ denotes the entry-wise multiplication of $Q$ and $G(\theta)$.
\end{theorem}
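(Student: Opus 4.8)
The plan is to derive a first-order matrix ODE (Kolmogorov-type equation) for $t \mapsto M^{(\theta)}(t)$ and then recognize its unique solution as $e^{t\mathcal{M}(\theta)}$. First I would set up a one-step / first-event decomposition: starting from a single particle of type $i$ at the origin, condition on what happens in the time interval $[0,h]$ for small $h>0$. With probability $1 - (\beta_i + q_i)h + o(h)$ no branching or type-switch occurs, and the single particle simply runs the Lévy process $\chi_i$; with probability $\beta_i h + o(h)$ it branches into $k$ offspring (with law $\mu_i$) of type $i$ at the current location; with probability $q_{ij}h + o(h)$ it switches to type $j$ while performing the transitional jump $U_{ij}$; and the probability of two or more events is $o(h)$. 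Applying $\bE_{0,i}[\sum_{u\in\cN_t}e^{-\theta X_u(t)}\ind{\JJ_u(t)=j}]$ to each scenario, using the branching property (the expectation over the subtree rooted at each offspring is again an entry of $M^{(\theta)}(t-h)$), the independence of the Lévy increments, and the moment assumptions $\phi_i(\theta)<\infty$, $G_{ij}(\theta)<\infty$, $m_i<\infty$, one obtains
\begin{equation}
M^{(\theta)}(t) = \big(I + h\,\mathcal{M}(\theta)\big) M^{(\theta)}(t-h) + o(h),
\end{equation}
where the $\phi_i(\theta)$ term comes from $\bE_{0,i}[e^{-\theta\chi_i(h)}] = 1 + h\phi_i(\theta) + o(h)$ in the ``no event'' case, the $q_{ij}G_{ij}(\theta)$ off-diagonal terms come from the type-switch case weighted by $\bE[e^{-\theta U_{ij}}]$, and the diagonal $\beta_i m_i - \beta_i$ term comes from the branching case (expected number of offspring $m_i$) minus the mass lost from the ``no event'' probability $-\beta_i h$. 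Rearranging and letting $h\downarrow 0$ gives the forward equation $\frac{\dd}{\dd t}M^{(\theta)}(t) = \mathcal{M}(\theta)\,M^{(\theta)}(t)$ with $M^{(\theta)}(0) = I$, whose unique solution is $e^{t\mathcal{M}(\theta)}$.

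An alternative, cleaner route that I would mention as a cross-check: decompose a particle's trajectory along its ancestral line. By the branching-MAP structure, $M^{(\theta)}_{i,j}(t)$ equals the expected number of particles of type $j$ at a $\theta$-tilted weight, and by the many-to-one lemma this is $\bE_{0,i}[e^{-\theta\chi(t)}\ind{\Theta(t)=j}\cdot e^{\int_0^t (\beta_{\Theta(s)}m_{\Theta(s)} - \beta_{\Theta(s)})\dd s}]$ along the underlying MAP $(\chi,\Theta)$ with an exponential population-growth reweighting. Since the MAP itself satisfies $\bE_{0,i}[e^{-\theta\chi(t)}\ind{\Theta(t)=j}] = (e^{tF(\theta)})_{ij}$ with $F(\theta) = \diag(\phi_i(\theta)) + Q\circ G(\theta)$ by \eqref{eqn:MAP matrix exponent}, and the extra multiplicative functional of $\Theta$ is handled by a Feynman–Kac / Kolmogorov argument that simply adds $\diag(\beta_i m_i - \beta_i)$ to the generator, one again reads off $\mathcal{M}(\theta)$. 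Establishing the many-to-one formula rigorously in this type-dependent Lévy setting is itself slightly delicate, so I would likely present the direct first-event ODE derivation as the main argument.

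The main obstacle is justifying that the $o(h)$ error terms are genuinely negligible and that $M^{(\theta)}(t)$ is finite and differentiable to begin with — i.e., interchanging the expectation with the small-$h$ asymptotics, and controlling the contribution of multiple events (in particular a branching event producing many offspring, or infinitely many small jumps of $\chi_i$) within $[0,h]$. Finiteness of $M^{(\theta)}(t)$ should follow from a Gronwall-type bound: the same first-event decomposition gives $\|M^{(\theta)}(t)\| \le \|M^{(\theta)}(t-h)\|(1 + Ch) + o(h)$ for a constant $C$ depending on $\theta$ through $\phi_i(\theta)$, $G_{ij}(\theta)$ and through $\beta_i m_i$, yielding $\|M^{(\theta)}(t)\| \le e^{Ct}<\infty$; once finiteness is in hand, dominated convergence legitimizes passing to the limit $h\downarrow 0$ and the ODE follows. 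The assumption $m_i<\infty$ is exactly what makes the branching contribution $\beta_i m_i h + o(h)$ rather than divergent, and $\theta\in[0,\bar\theta)$ ensures all the Laplace transforms appearing are finite; these are the hypotheses I would point to when closing the estimate.
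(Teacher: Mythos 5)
Your proposal is correct in substance and rests on the same key idea as the paper -- condition on the first branching/type-switching event of the initial particle and use the branching property plus $\EE[e^{-\theta\chi_i(s)}]=e^{s\phi_i(\theta)}$ and $G_{ij}(\theta)$ -- but the technical execution differs. The paper does not pass through an infinitesimal $[0,h]$ decomposition and a Kolmogorov ODE; instead it integrates over the exact exponential$(\beta_i+q_i)$ law of the first event time to obtain an exact renewal identity,
\begin{equation}
M_{i,j}^{(\theta)}(t)=e^{-(\beta_i+q_i)t}e^{t\phi_i(\theta)}\ind{i=j}
+\int_0^t e^{-(\beta_i+q_i)s}e^{s\phi_i(\theta)}\Big(\beta_i m_i M_{i,j}^{(\theta)}(t-s)+\sum_{l\ne i}q_{il}G_{il}(\theta)M_{l,j}^{(\theta)}(t-s)\Big)\dd s,
\end{equation}
and then, by a purely algebraic manipulation (multiplying by the diagonal generator of $e^{t(\phi_i(\theta)-\beta_i-q_i)}$ and using Fubini), converts this into the exact integral equation $M^{(\theta)}(t)=I+\int_0^t\mathcal{M}(\theta)M^{(\theta)}(s)\,\dd s$, whose solution is $e^{t\mathcal{M}(\theta)}$. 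What this buys, compared with your route, is that the issues you flag as the main obstacle -- justifying the $o(h)$ error terms, a priori differentiability of $t\mapsto M^{(\theta)}(t)$, and the interchange of limits -- never arise: the renewal identity is exact for every $t$, and only an integral (not a derivative) of $M^{(\theta)}$ is needed. Your Gronwall argument for finiteness is still a sensible complement (the paper is silent on this point and implicitly uses finiteness when rearranging), and your many-to-one/Feynman--Kac cross-check is consistent with how the paper later identifies $\bE_{x,i}[W_\theta(t)]$ in Section 5, so nothing in your plan is wrong; it simply requires more analytic bookkeeping than the paper's exact first-passage decomposition.
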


Let $\theta\in [0, \bar{\theta})$. By the Perron-Frobenius (PF) theorem (see \cite[Theorem 1.1]{seneta06} or  \cite[Theorem 8.3.4]{Horn_Johnson}), since $M^{(\theta)}(t)$ is a matrix with positive entries, the matrix $\mathcal{M}(\theta)$ has a PF eigenvalue $\lambda(\theta)$. 
This eigenvalue $\lambda(\theta)$ is real and larger than the real part of any other eigenvalues of $\mathcal{M}(\theta)$, and its associated right eigenvector $\vec{V}(\theta) = (V_i(\theta), i\in \cI)$ has strictly positive entries. Furthermore, because the entries of $\mathcal{M}(\theta)$ are infinitely differentiable on $(0, \bar{\theta})$, the PF eigenvalue $\lambda(\theta)$  and the corresponding eigenvector $\vec{V}(\theta)$ are also infinitely differentiable on this interval. The properties above of the PF eigenvalue are introduced in Lemma~\ref{lem:property of PF} and Lemma~\ref{lem:diff_of_V}.

We observe that the number of particles $\left(\sum_{u\in \cN_t} \ind{J_u(t)=1}, \ldots \sum_{u\in \cN_t} \ind{J_u(t)=\mathtt{d}}\right)_{t\ge 0}$,
forms a continuous-time multitype branching process (see e.g.\ \cite{Athreya1968}): for each particle of type $i\in \cI$, it is replaced by $k$ particles of type $i$ at rate $\beta_i\mu_i(k)$ for $k\ge 0$; whereas it is replaced by one type $j$ particle at rate $q_{ij}$ for $j\ne i$. When $\theta=0$, $\mathcal{M}(0)= Q+ \diag (\beta_1 m_1-\beta_1, \ldots, \beta_{\mathtt{d}} m_{\mathtt{d}}-\beta_{\mathtt{d}})$ serves as the generator of the first moment semigroup of $\left(\sum_{u\in \cN_t} \ind{J_u(t)=1}, \ldots \sum_{u\in \cN_t} \ind{J_u(t)=\mathtt{d}}\right)_{t\ge 0}$
and the PF eigenvalue $\lambda(0)$  determines the extinction behaviour with a phase transition. 
Denote the survival event by  \[
\mathscr{S} := \left\{ \#\mathcal{N}_t\ge 1,  \forall t\ge 0 \right\}.
\]
When $\lambda(0)>0$, the multitype branching process is called supercritical, and the survival probabilities $\mathbf{P}_{x,i}(\mathscr{S})>0$ for all $i\in \cI$. More precisely, define the extinction probabilities 
\[
\bq_i := \mathbf{P}_{x,i}(\exists t\ge 0, \#\mathcal{N}_t = 0) = 1 - \mathbf{P}_{x,i}(\mathscr{S}), \qquad i\in \cI. 
\]
By \cite[Theorem 2]{Athreya1968}, the vector $\vec{\bq} := (\bq_1, \dots, \bq_\mathtt{d})$ is the unique solution in $[0,1]^{\mathtt{d}} \setminus \{(1,\dots,1)\}$
to the equation:
\begin{equation}\label{eqn:surv_prob}
    \diag \bigg(\beta_1 \Big(\sum_{k\ge 0} \mu_i(k) s_1^k -s_1\Big), \ldots, \beta_{\mathtt{d}} \Big(\sum_{k\ge 0} \mu_i(k) s_{\mathtt{d}}^k -s_{\mathtt{d}}\Big)\bigg) + Q \, (s_1, \ldots , s_{\mathtt{d}})^{\top} = 0.
\end{equation}
The process also satisfies the  ``positive regularity'' and ``non-singularity'' in \cite[Theorem 2.7.1]{Harris63} and \cite[Corollary 1 of Theorem 2.7.2]{Harris63}. Therefore the unique solution $\vec{\bq}$ also lies in $[0,1)^{\mathtt{d}}$. Whereas when $\lambda(0)\le 0$, the extinction happens $\mathbf{P}_{x,i}$-a.s.\ $\forall x\in \mathbb{R}, i\in\cI$, i.e.\ $\mathbf{P}_{x,i}(\mathscr{S})=0$. 

In what follows, we will always work in the supercritical case 
$\lambda(0)>0$. 
Summarizing, we assume that 
\begin{enumerate}[label=(A1), ref=(A1),leftmargin=5.0em]
    \item $\bar{\theta} \in (0, +\infty]$ and  $\lambda(0)>0$. \label{A1}
\end{enumerate}

For each $\theta\in[0,\bar\theta)$, define
\begin{equation}
	W_{\theta} (t):= \sum_{u\in \cN_t} e^{-\left(\theta X_u(t)+\lambda(\theta) t\right)} V_{\JJ_u(t)}(\theta), \qquad t\ge 0.  
\end{equation}
By Theorem \ref{thm:matrix-exponent},  $(W_{\theta}(t))_{t\ge 0}$ is a $\mathbf{P}_{x,i}$-martingale. We call it the additive martingale.
Since $(W_\theta(t))_{t\ge 0}$ is a non-negative martingale, it converges a.s.\ to non-negative limit, denoted by $W_\theta(\infty)$. The following theorem gives the $\mathcal{L}^1$-convergence result for $(W_\theta(t))_{t\ge 0}$.

\begin{theorem}[Additive martingale]\label{Biggins_theorem}
    Let $\theta \in [0, \bar{\theta})$.
    On the event $\mathscr{S}$, the additive martingale $W_{\theta}(t)$ converges to $W_{\theta}(\infty)$ in $\mathcal{L}^{1}(\mathbf{P}_{x,i})$ for all $x \in \mathbb{R}$, $i \in \mathcal{I}$ if and only if $\theta \lambda'(\theta) < \lambda(\theta)$ and $\sum_{k\ge 1} (k \log k) \mu_j(k) < \infty$ for all $j \in \mathcal{I}$. 
    
    Moreover, when $W_{\theta}$ converges in $ \mathcal{L}^{1}(\mathbf{P}_{x,i})$, we have $\bP_{x,i}\left(\{W_{\theta}(\infty)>0\} \Delta \mathscr{S}\right) = 0$, where $\Delta$ represents the symmetric difference between two sets.
\end{theorem}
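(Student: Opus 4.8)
The plan is to run the spine (size-biasing) argument in the branching-MAP setting, reduce the $\cL^1$-convergence to an almost-sure finiteness statement under a tilted law via the classical Radon--Nikodym dichotomy, read off the two conditions from the long-run behaviour of the spine, and finally obtain the $0$--$1$ law from a multitype Kesten--Stigum-type fixed-point argument. Fix $\theta\in[0,\bar\theta)$ and an initial point $(x,i)$. By Theorem~\ref{thm:matrix-exponent} together with $\mathcal M(\theta)\vec{V}(\theta)=\lambda(\theta)\vec{V}(\theta)$ one has $\bE_{x,i}[W_\theta(t)]=e^{-\theta x}V_i(\theta)=W_\theta(0)$, so $W_\theta(t)/W_\theta(0)$ is a mean-one $\bP_{x,i}$-martingale; I would then define the tilted law $\widetilde\bP_{x,i}$ on the natural filtration $(\cF_t)$ by $\dd\widetilde\bP_{x,i}|_{\cF_t}=(W_\theta(t)/W_\theta(0))\,\dd\bP_{x,i}|_{\cF_t}$ and invoke the spine decomposition of Section~\ref{subsection_spine_decom}. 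Under $\widetilde\bP_{x,i}$ the population carries a distinguished ray (the spine) such that: the spine particle $(\xi,\Theta)$ is a MAP with matrix exponent $\widetilde{\mathcal M}(q)=D_V^{-1}\big(\mathcal M(\theta+q)-\lambda(\theta)I\big)D_V$, where $D_V=\diag(V_j(\theta))_{j\in\cI}$ (so that $\widetilde{\mathcal M}(0)$ generates an irreducible modulating chain on $\cI$); the spine branches at the size-biased, type-dependent rates and sheds a size-biased number of offspring at each such event; and each off-spine child launches, from its birth position and type, an independent copy of the original branching MAP.

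By the standard dichotomy for nonnegative martingales, $W_\theta$ converges to $W_\theta(\infty)$ in $\cL^1(\bP_{x,i})$ if and only if $\widetilde\bP_{x,i}\big(\limsup_t W_\theta(t)<\infty\big)=1$, and otherwise $W_\theta(\infty)=0$ $\bP_{x,i}$-a.s.; so it suffices to decide boundedness of $W_\theta$ under $\widetilde\bP_{x,i}$. Let $\cG_\infty$ be generated by the spine trajectory, its branching times $s_1<s_2<\cdots$ and the offspring numbers $\widehat N_k$ produced there. Using the martingale property of the off-spine copies, $\widetilde\bE_{x,i}[W_\theta(t)\mid\cG_\infty]$ equals the spine term $e^{-\theta\xi(t)-\lambda(\theta)t}V_{\Theta(t)}(\theta)$ plus a telescoping sum of the weights $(\widehat N_k-1)\,e^{-\theta\xi(s_k)-\lambda(\theta)s_k}V_{\Theta(s_k)}(\theta)$ over $s_k\le t$. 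A conditional Fatou argument, combined with the fact that a conditionally independent sum of nonnegative terms is a.s.\ finite iff its conditional expectation is finite, reduces the boundedness of $W_\theta$ under $\widetilde\bP_{x,i}$ to the two almost-sure statements (a)~$e^{-\theta\xi(t)-\lambda(\theta)t}\to0$ and (b)~$\sum_k\widehat N_k\,e^{-\theta\xi(s_k)-\lambda(\theta)s_k}<\infty$; here I use that $V_\cdot(\theta)$ is bounded above and below on the finite set $\cI$.

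For (a), since the modulating chain of the spine is irreducible on the finite set $\cI$, the strong law of large numbers for MAPs gives $\xi(t)/t\to\gamma$ $\widetilde\bP_{x,i}$-a.s.; and because $\widetilde{\mathcal M}(q)$ is similar to $\mathcal M(\theta+q)-\lambda(\theta)I$ it has Perron--Frobenius eigenvalue $\lambda(\theta+q)-\lambda(\theta)$, whence $\gamma=-\tfrac{\dd}{\dd q}\big|_{q=0}\big(\lambda(\theta+q)-\lambda(\theta)\big)=-\lambda'(\theta)$. Thus (a) holds exactly when $\lambda(\theta)-\theta\lambda'(\theta)>0$; if $\lambda(\theta)-\theta\lambda'(\theta)<0$ the spine term blows up, while in the critical case $\lambda(\theta)=\theta\lambda'(\theta)$ the process $\theta\xi(t)+\lambda(\theta)t$ has zero stationary drift and, by a central-limit/iterated-logarithm estimate for MAPs, is not bounded below, so the spine term is again unbounded. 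For (b), the $s_k$ grow linearly ($s_k/k$ converges to a positive constant) and, on the event in (a), the weights $e^{-\theta\xi(s_k)-\lambda(\theta)s_k}$ decay geometrically in $k$; a conditional Borel--Cantelli argument (the $\widehat N_k$ being, given the spine, independent with the size-biased laws $\widehat\mu_j(k)=k\mu_j(k)/m_j$ at the visited types, and all types visited infinitely often by irreducibility) then shows (b) holds iff $\sum_k\log k\cdot\widehat\mu_j(k)<\infty$ for all $j$, i.e.\ iff $\sum_{k\ge1}(k\log k)\mu_j(k)<\infty$ for all $j\in\cI$. Combining (a) and (b) gives the claimed equivalence; note that at $\theta=0$, (a) is just $\lambda(0)>0$, which holds by \ref{A1}, so we recover the multitype Kesten--Stigum theorem.

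For the $0$--$1$ law, suppose $W_\theta\in\cL^1$. The inclusion $\{W_\theta(\infty)>0\}\subseteq\mathscr S$ is immediate. Set $p_j:=\bP_{x,j}(W_\theta(\infty)=0)$, which does not depend on $x$ (replace $W_\theta$ by $e^{\theta x}W_\theta$); decomposing at the first branching/type-switching event of the root — the spatial jumps $U_{jl}$ being irrelevant to the event $\{W_\theta(\infty)=0\}$ — shows that $\vec{p}=(p_j)_{j\in\cI}$ solves the fixed-point system \eqref{eqn:surv_prob} with $p_j\ge\bq_j$, while $\bE_{x,j}[W_\theta(\infty)]=W_\theta(0)>0$ forces some $p_j<1$, so $\vec{p}\ne(1,\dots,1)$; by the uniqueness in $[0,1]^{\mathtt d}\setminus\{(1,\dots,1)\}$ (from \cite{Athreya1968} together with the positive regularity and non-singularity of \cite{Harris63}) we get $\vec{p}=\vec{\bq}\in[0,1)^{\mathtt d}$. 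Hence $\{W_\theta(\infty)=0\}$ and $\mathscr S^{\,c}$ have equal $\bP_{x,i}$-probability and, by the reverse inclusion, differ by a null set, which is the asserted identity. The hard part will be (a): establishing the strong law — and, at the boundary $\lambda(\theta)=\theta\lambda'(\theta)$, the matching fluctuation lower bound — for the Doob-transformed spine MAP and pinning its velocity at $-\lambda'(\theta)$; this is exactly where type-dependent, Markov-modulated motion (rather than a single Lévy process) demands genuine MAP machinery, whereas the Borel--Cantelli identification of the $k\log k$ condition and the fixed-point argument for the $0$--$1$ law are routine once the spine picture is in hand.
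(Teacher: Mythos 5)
Your proposal follows essentially the same route as the paper: change of measure by $W_\theta$, spine decomposition, identification of the spine's tilted matrix exponent (your similarity transform $D_V^{-1}(\mathcal{M}(\theta+q)-\lambda(\theta)I)D_V$ is correct and is equivalent to the eigenvector computation in Lemma~\ref{speed of spine}, giving speed $-\lambda'(\theta)$), oscillation of the zero-drift spine in the boundary case $\theta\lambda'(\theta)=\lambda(\theta)$ (the paper invokes Proposition~\ref{speed of MAP} rather than an LIL, same effect), Borel--Cantelli for the $k\log k$ condition, conditioning on the spine $\sigma$-field for sufficiency, and the fixed-point argument via \eqref{eqn:surv_prob} for the $0$--$1$ law.

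One step, as you state it, is not valid: ``a conditionally independent sum of nonnegative terms is a.s.\ finite iff its conditional expectation is finite'' is false in general (e.g.\ independent $X_k\ge 0$ with $\PP(X_k=4^k)=2^{-k}$ and $X_k=0$ otherwise have an a.s.\ finite sum but divergent means), so you cannot conclude that $W_\theta$ is unbounded under $\widetilde{\bP}^{\theta}_{x,i}$ merely from divergence of the conditional-mean series in your item (b). The repair is the one the paper uses: at each fission time the newborn children sit at the spine's position, so $W_\theta(T_n)\ge A_{\xi}(T_n)\,e^{-\theta X_{\xi}(T_n)-\lambda(\theta)T_n}\min_j V_j(\theta)$ (cf.\ \eqref{lower bound of Wn}); your Borel--Cantelli estimate (infinite $\log$-moment of the size-biased law gives $\log A_{\xi}(T_{N_j(n)})\ge Cn$ infinitely often for every $C$, while the exponential weight decays at a fixed linear rate) then makes these individual terms blow up, which is what yields $\limsup_t W_\theta(t)=\infty$ $\widetilde{\bP}^{\theta}_{x,i}$-a.s.\ and hence degeneracy under $\bP_{x,i}$. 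Note also that your remarks ``all types are visited infinitely often'' and ``$s_k/k$ is linear'' need the quantitative bookkeeping relating the type-$j$ fission count, the occupation time of type $j$, and the Cox-process fission times, as in \eqref{eqn:limsup Tn/n}--\eqref{eqn: limsup n/Nn}; with these substitutions your argument coincides with the paper's proof.
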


By the main Theorem of \cite{Kingman61}, the function $q \mapsto \lambda(q)$ is strictly convex on $(0, \bar{\theta})$. It follows that $q \mapsto \frac{\lambda(q)}{q}$ either attains its unique minimum in the interval $(0, \bar{\theta})$ or approaches the infimum at the boundaries $0$ or $\bar{\theta}$ (note that $\bar{\theta}$ could be $+\infty$). 
The next assumption specifies that the minimum is attained in the interior.
\begin{enumerate}[label=(A2), ref=(A2),leftmargin=5.0em]
    \item The function $q \mapsto \frac{\lambda(q)}{q}$ attains its unique minimum at some $\theta^*\in (0,\bar{\theta})$; or equivalently, there exists $\theta^*\in (0,\bar{\theta})$ such that $\lambda(\theta^*) = \theta^*\lambda'(\theta^*)$. \label{A2}
\end{enumerate}
We refer to $\theta = \theta^*$ as the critical regime;  $\frac{\lambda(\theta)}{\theta}< \lambda'(\theta)$, corresponding to $\theta \in (\theta^*, \bar{\theta})$, as the subcritical regime; and $\frac{\lambda(\theta)}{\theta} > \lambda'(\theta)$, corresponding to $\theta \in (0, \theta^*)$, as the supercritical regime.
Under the assumption \ref{A2}, the $\mathcal{L}^1$-convergence in Theorem~\ref{Biggins_theorem} only happens in the supercritical regime $\theta \in (0, \theta^*)$. 

As an application of Theorem \ref{Biggins_theorem}, we obtain the velocity of the leftmost particle.

\begin{corollary}
\label{thm:speed of Lt}
	Assume \ref{A1}, \ref{A2}, and $\sum_{k\ge 1} (k\log k)\mu_{j}(k) <+\infty$, for all $j \in \mathcal{I}$. 
    Then, for any $x\in \mathbb{R}$ and $i\in \mathcal{I}$, $\mathbf{P}_{x,i}$-a.s.\ on the non-extinction event $\mathscr{S}$,
	\[\lim_{t\to +\infty}\frac{\min_{u\in \mathcal{N}_t} X_u(t)}{t} =- \frac{\lambda(\theta^*)}{\theta^*}.
	\]
\end{corollary}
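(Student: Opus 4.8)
We sketch the argument. Write $L_t := \min_{u \in \cN_t} X_u(t)$ and $v^* := \lambda(\theta^*)/\theta^*$; note that $v^* = \lambda'(\theta^*)$ by \ref{A2}. The plan is to establish the two almost sure inequalities $\liminf_{t\to\infty} L_t/t \ge -v^*$ and $\limsup_{t\to\infty} L_t/t \le -v^*$ (the latter on $\mathscr S$) separately; in each case one works with a parameter $\theta \in (0,\theta^*)$, derives a bound involving $\lambda(\theta)/\theta$ resp.\ $\lambda'(\theta)$, and then lets $\theta \uparrow \theta^*$, using that $\lambda$ is smooth on the interval $(0,\bar\theta)$, which contains $\theta^*$, so that $\lambda(\theta)/\theta \to v^*$ and $\lambda'(\theta)\to v^*$.

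\emph{Lower bound (no particle escapes far to the left).} Fix $\theta\in(0,\theta^*)$. Since $W_\theta$ is a non-negative c\`adl\`ag martingale, Doob's maximal inequality gives $\bP_{x,i}(\sup_{t\ge0}W_\theta(t) > a) \le W_\theta(0)/a$, hence $C_\theta := \sup_{t\ge 0}W_\theta(t) < \infty$ $\bP_{x,i}$-a.s. Every summand of $W_\theta(t) = \sum_{u\in\cN_t} e^{-(\theta X_u(t)+\lambda(\theta)t)} V_{\JJ_u(t)}(\theta)$ is then at most $C_\theta$, and $V_{\cdot}(\theta) \ge v_\theta := \min_{i\in\cI}V_i(\theta) > 0$, whence $X_u(t) \ge -\lambda(\theta)t/\theta - \theta^{-1}\log(C_\theta/v_\theta)$ for every $u\in\cN_t$ and every $t$. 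Thus $\liminf_{t\to\infty}L_t/t \ge -\lambda(\theta)/\theta$ a.s.\ on $\mathscr S$ (on extinction the statement is vacuous for large $t$). Intersecting over a sequence $\theta_n\uparrow\theta^*$ yields $\liminf_{t\to\infty}L_t/t \ge -v^*$ a.s.

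\emph{Upper bound (some particle runs to the left at speed $\approx v^*$).} Fix $\theta\in(0,\theta^*)$. As $\theta$ lies in the supercritical regime ($\theta\lambda'(\theta)<\lambda(\theta)$) and $\sum_k(k\log k)\mu_j(k)<\infty$ for all $j$, Theorem~\ref{Biggins_theorem} shows $W_\theta$ is uniformly integrable with $W_\theta(t)\to W_\theta(\infty)$ in $\mathcal{L}^1(\bP_{x,i})$ and $\bP_{x,i}(\{W_\theta(\infty)>0\}\,\Delta\,\mathscr S)=0$. Let $\tilde\bP^\theta_{x,i}$ be the associated change of measure, $\dd\tilde\bP^\theta_{x,i}/\dd\bP_{x,i}\big|_{\cF_t} = W_\theta(t)/W_\theta(0)$, which by uniform integrability extends to $\cF_\infty$ with density $W_\theta(\infty)/W_\theta(0)$. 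Under $\tilde\bP^\theta_{x,i}$ the spine decomposition of Section~\ref{subsection_spine_decom} singles out a distinguished ray whose position--type process $(\xi_t,\eta_t)_{t\ge0}$ is a MAP, namely the exponential $\theta$-tilt of $((\chi_i)_{i\in\cI},\Theta,(U_{ij})_{i,j\in\cI})$ twisted by $\vec V(\theta)$ (the branching along the spine is size-biased but does not affect the spine's own displacement). A strong law of large numbers for this MAP gives $\xi_t/t\to-\lambda'(\theta)$, $\tilde\bP^\theta_{x,i}$-a.s.: the phase process is a finite irreducible chain, so $\xi_t/t$ tends to the stationary average of the tilted per-phase drifts, and combining the Perron--Frobenius perturbation identity $\lambda'(\theta)=\vec U(\theta)^{\top}\mathcal M'(\theta)\vec V(\theta)$ (with $\vec U(\theta)$ the left PF eigenvector, $\vec U(\theta)^\top\vec V(\theta)=1$) with the fact that the tilted stationary phase law is proportional to $(U_i(\theta)V_i(\theta))_{i\in\cI}$ shows this average equals $-\lambda'(\theta)$. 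Since the spine is one of the particles of $\cN_t$, we have $L_t\le\xi_t$; hence for each $\varepsilon>0$ the event $B_\varepsilon := \{L_t > -(\lambda'(\theta)-\varepsilon)t \text{ for infinitely many }t\}$, which is measurable with respect to the branching filtration alone, satisfies $\tilde\bP^\theta_{x,i}(B_\varepsilon)=0$. Therefore $0 = \tilde\bP^\theta_{x,i}(B_\varepsilon) = W_\theta(0)^{-1}\bE_{x,i}[W_\theta(\infty)\ind{B_\varepsilon}]$, and since $W_\theta(\infty)>0$ a.s.\ on $\mathscr S$ we obtain $\bP_{x,i}(B_\varepsilon\cap\mathscr S)=0$, i.e.\ $\limsup_{t\to\infty}L_t/t \le -(\lambda'(\theta)-\varepsilon)$ a.s.\ on $\mathscr S$. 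Letting $\varepsilon\downarrow0$ and then $\theta\uparrow\theta^*$ gives $\limsup_{t\to\infty}L_t/t\le-v^*$ a.s.\ on $\mathscr S$, and combining with the lower bound proves the corollary.

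\emph{Expected main obstacle.} The easy half is the lower bound, which falls out of the martingale property alone. The work is in the upper bound, and specifically in turning the abstract spine into a quantitative statement: one must verify that under the spine change of measure the spine's position--type pair is genuinely a MAP, pin down its characteristic triplet as the $\theta$-tilt of $((\phi_i)_{i\in\cI},Q,G)$ twisted by $\vec V(\theta)$, and then combine the ergodic theorem for finite-phase MAPs with the Perron--Frobenius perturbation identity to read off the limiting drift $-\lambda'(\theta)$. A secondary, routine point is the transfer of measure: the $0$--$1$ conclusion must be phrased for an event ($B_\varepsilon$, expressed through $L_t$) that is $\cF_\infty$-measurable under $\bP_{x,i}$, which is exactly why the inequality $L_t\le\xi_t$ is used to deduce it from the spine statement. (An alternative route for the upper bound avoiding the spine would be a truncated second-moment estimate on the number of particles near $-ct$ for $c<v^*$, using Theorem~\ref{thm:matrix-exponent} and the PF asymptotics of $M^{(\theta)}(t)$, but it is longer and less transparent in the multitype setting.)
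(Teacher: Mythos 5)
Your proof is correct and follows essentially the same route as the paper: the lower bound comes from controlling the additive martingale, and the upper bound from the spine's MAP law of large numbers under the supercritical change of measure, transferred back to $\mathbf{P}_{x,i}$ on $\mathscr{S}$ through the uniform integrability and the fact that $W_{\theta}(\infty)>0$ a.s.\ on $\mathscr{S}$, then letting $\theta\uparrow\theta^*$. The only (harmless) differences are cosmetic: the paper derives the lower bound from $W_{\theta}(t)\to 0$ at $\theta\ge\theta^*$ rather than from a.s.\ boundedness at $\theta<\theta^*$, and it identifies the spine's drift $-\lambda'(\theta)$ by computing the Perron--Frobenius eigenvalue of the tilted matrix exponent (Lemma~\ref{speed of spine}) instead of your equivalent stationary-distribution/perturbation-identity computation.
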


The main tool we use in the proof of Theorem \ref{Biggins_theorem} is the spine decomposition. The spine decomposition is a powerful technique in the study of branching processes. Typically, it contains two steps. The first step is a change of measure, and the second step is a deconstruction of the process in the new probability measure. For the details of the spine decomposition on classic branching systems, one can refer to \cite{Lyons97,ShiBRW} for applications on branching random walks (BRWs), \cite{KYPRIANOU04} for applications on branching Brownian motions (BBMs), and \cite{RS20} for applications on branching Markov processes (BMPs). In our model, we established a spine decomposition, which is introduced formally in  Section \ref{subsection_spine_decom}, and proved in Section~\ref{section:proof_spine}. 
This framework provides a powerful tool for  deeper analysis of the model, which we employ here to study the derivative martingale and the travelling waves.

For $\theta\in (0, \bar{\theta})$, define  \emph{the derivative  martingale} 
    \begin{equation}
        Z_{\theta}(t) := \sum_{u \in \mathcal{N}_t} e^{-\theta X_u(t) -\lambda(\theta) t}\left[V_{J_u(t)}(\theta)(X_u(t) +\lambda'(\theta)t)-V'_{J_u(t)}(\theta)\right], \qquad t\ge 0.
    \end{equation}
Note that we intuitively have $Z_{\theta}(t) := -\partial_{\theta}W_{\theta}(t)$. At criticality $\theta = \theta^*$,  the derivative martingale is a crucial object for understanding the fine structure of the frontier in branching systems; see, for example, \cite{Aidekon13,ShiBRW} 
for applications on branching random walks,  \cite{KYPRIANOU04,LalleySellke87} for applications on BBMs and \cite{Ren2014} for applications on multitype BBMs. 

Using the spine decomposition, we prove the convergence of the critical derivative martingale.

\begin{theorem}[Critical derivative martingale]\label{thm:asymptotTruncated}
    Assume \ref{A1} and \ref{A2}. Then 
    $Z_{\theta^{*}}(t)$ converges $\bP_{x,i}$-a.s.\@ to a non-negative 
	limit $Z_{\theta^*}(\infty)$, as $t\to \infty$. 
    If furthermore 
    \begin{equation}\label{eq:dev-ui}
 \sum_{k\ge1}k(\log k)^2 \mu_j(k)< \infty, \qquad \forall j\in\mathcal{I},
    \end{equation}
    holds, then we have $\bP_{x,i}\left(\{Z_{\theta^*} (\infty)>0\} \Delta  \mathscr{S}\right) = 0$.
\end{theorem}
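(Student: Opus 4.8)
The plan is to run the classical two-step argument for the critical derivative martingale — carried out for branching random walks in \cite[Chapter~5]{ShiBRW} and for multitype branching Brownian motions in \cite{Ren2014} — with every path estimate for the underlying walk or diffusion replaced by the corresponding estimate for the \emph{centered Markov additive process} produced by the spine decomposition of Section~\ref{subsection_spine_decom}. It is convenient to pass to the \emph{critical coordinates} $y_u(t):=X_u(t)+\lambda'(\theta^*)t$, for which \ref{A2} gives $\theta^*X_u(t)+\lambda(\theta^*)t=\theta^*y_u(t)$, so that
\[
W_{\theta^*}(t)=\sum_{u\in\cN_t}e^{-\theta^*y_u(t)}V_{J_u(t)}(\theta^*),\qquad
Z_{\theta^*}(t)=\sum_{u\in\cN_t}e^{-\theta^*y_u(t)}\bigl(V_{J_u(t)}(\theta^*)\,y_u(t)-V'_{J_u(t)}(\theta^*)\bigr).
\]
A first ingredient, obtained along the way in the proof of Theorem~\ref{Biggins_theorem}, is that at criticality $W_{\theta^*}(t)\to0$ $\bP_{x,i}$-a.s.: under the change of measure driven by $W_{\theta^*}$ the spine position $X_\xi$ is a MAP of asymptotic slope $-\lambda'(\theta^*)$ (by the Perron--Frobenius data of Theorem~\ref{thm:matrix-exponent} and Lemma~\ref{lem:diff_of_V} together with $\lambda(\theta^*)=\theta^*\lambda'(\theta^*)$), hence $y_\xi$ is a non-degenerate \emph{centered} MAP, $\liminf_t y_\xi(t)=-\infty$, the spine weight $e^{-\theta^*y_\xi(t)}V_{J_\xi(t)}(\theta^*)$ has $\limsup=+\infty$ under $\bQ^{\theta^*}$, and the standard change-of-measure dichotomy forces $W_{\theta^*}(\infty)=0$ a.s. It therefore suffices to prove a.s.\ convergence of $\rho W_{\theta^*}(t)+Z_{\theta^*}(t)$ for one fixed $\rho>0$.

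\emph{Step 1: almost sure convergence.} Let $R=(R_j)_{j\in\cI}$ be the renewal function attached to the descending ladder structure of the centered spine $y_\xi$, normalised so that $V_j(\theta^*)R_j(y)=V_j(\theta^*)y-V'_j(\theta^*)+O(1)$ as $y\to\infty$, and extended by $R_j\equiv0$ on $(-\infty,0)$; by Markov-additive fluctuation theory $R_j$ is strictly positive on $(0,\infty)$, satisfies $R_j(y)\asymp y$, and $y\mapsto R_j(y+\rho)$ is harmonic for $y_\xi$ killed upon entering $(-\infty,-\rho)$. Writing $\underline y_u(t)$ for the running infimum of $y$ along the ancestral line of $u$ up to time $t$, Theorem~\ref{thm:matrix-exponent} (via the many-to-one formula) gives that
\[
D^{(\rho)}(t):=\sum_{u\in\cN_t}e^{-\theta^*y_u(t)}V_{J_u(t)}(\theta^*)\,R_{J_u(t)}\bigl(y_u(t)+\rho\bigr)\,\ind{\underline y_u(t)\ge-\rho},\qquad t\ge0,
\]
is a non-negative $\bP_{x,i}$-martingale, hence converges a.s. Let $\rho_0:=-\inf\{y_u(t):t\ge0,\ u\in\cN_t\}$ and $\Lambda_\rho:=\{\rho_0<\rho\}$; a first-moment estimate over the stopping line of first passages below $-\rho$ — using Theorem~\ref{thm:matrix-exponent} and that a particle on that line carries weight $e^{-\theta^*y_u}V_{J_u}(\theta^*)\ge c\,e^{\theta^*\rho}$ — yields $\bP_{x,i}(\rho_0\ge\rho)\le Ce^{-\theta^*\rho}$, so $\bP_{x,i}(\Lambda_\rho)\uparrow1$. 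On $\Lambda_\rho$ every indicator equals $1$ and, by the normalisation of $R$ and $V_j(\theta^*)>0$, one has $D^{(\rho)}(t)=Z_{\theta^*}(t)+\rho W_{\theta^*}(t)+O(W_{\theta^*}(t))$ uniformly in $t$; letting $t\to\infty$ and using $W_{\theta^*}(t)\to0$ shows that on $\Lambda_\rho$ the process $Z_{\theta^*}(t)$ converges, with limit $D^{(\rho)}(\infty)\ge0$. Since $\bP_{x,i}\bigl(\bigcup_\rho\Lambda_\rho\bigr)=1$, $Z_{\theta^*}(t)$ converges $\bP_{x,i}$-a.s.\ to a non-negative limit $Z_{\theta^*}(\infty)$.

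\emph{Step 2: positivity on survival.} Since $Z_{\theta^*}(t)=0$ once $\cN_t=\varnothing$, we have $Z_{\theta^*}(\infty)=0$ on $\mathscr S^c$, and only $\bP_{x,i}\bigl(\mathscr S\setminus\{Z_{\theta^*}(\infty)>0\}\bigr)=0$ remains. Conditioning on the first branching-or-switching event of the initial particle and using $Z_{\theta^*}(\infty)=\sum_{v}e^{-\theta^*y_v}Z^{(v)}_{\theta^*}(\infty)$ — the sum over the first-generation particles $v$ at birth positions $y_v$, with $Z^{(v)}_{\theta^*}$ the derivative martingale of the subtree rooted at $v$, the cross-terms vanishing because $W_{\theta^*}(\infty)=0$ — one checks that $p_i:=\bP_{0,i}(Z_{\theta^*}(\infty)=0)$ solves the same system \eqref{eqn:surv_prob} as the extinction probabilities and dominates $\bq_i$; by the uniqueness recalled from \cite{Athreya1968} we get $\vec p\in\{(1,\dots,1),\vec{\bq}\}$, so it suffices to exhibit one $i$ with $\bP_{0,i}(Z_{\theta^*}(\infty)>0)>0$. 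Change measure by the non-negative martingale $D^{(\rho)}$: under $\bQ^{(\rho)}$ the spine decomposition yields a spine $\xi$ whose motion is the centered MAP $y_\xi$ conditioned (via $R$) to stay above $-\rho$ and which branches at a size-biased rate with i.i.d.\ off-spine subtrees, and $D^{(\rho)}(t)$ equals a spine term (tending to $0$, since under $\bQ^{(\rho)}$ one has $y_\xi(t)\to+\infty$ of order $\sqrt t$) plus a sum, over the branch points $w$ on the spine with $b_w\le t$, of independent subtree contributions, the one rooted at $w$ having conditional mean comparable (using $R_j(y)\asymp y$) to $e^{-\theta^*y_\xi(b_w)}\,y_\xi(b_w)$. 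A Borel--Cantelli / conditional-variance argument then gives $D^{(\rho)}(\infty)>0$ $\bQ^{(\rho)}$-a.s.\ provided this series of subtree contributions converges $\bQ^{(\rho)}$-a.s., and — because under $\bQ^{(\rho)}$ the conditioned spine returns to $O(1)$ heights infinitely often — the relevant Kesten--Stigum-type condition for that convergence is precisely $\sum_k k(\log k)^2\mu_j(k)<\infty$ for all $j$, i.e.\ \eqref{eq:dev-ui}, the extra logarithm over the $k\log k$ of Theorem~\ref{Biggins_theorem} being supplied by the linear weight $R_j(y)\asymp y$. Hence $\bP_{0,i}(Z_{\theta^*}(\infty)>0)\ge c\,\bP_{0,i}(D^{(\rho)}(\infty)>0)>0$ for $\rho$ large, and monotonicity of $D^{(\rho)}(\infty)$ in $\rho$ upgrades this to $\bP_{x,i}\bigl(\mathscr S\setminus\{Z_{\theta^*}(\infty)>0\}\bigr)=0$.

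The substance of the proof lies in the Markov-additive fluctuation theory behind both steps: constructing the descending ladder process and renewal function $R_j$ of the centered spine $y_\xi$ — the MAP analogue of a mean-zero random walk — proving $R_j(y)\asymp y$ and its harmonicity for the killed process, and controlling the conditioned-to-stay-positive version of $y_\xi$ (its $\sqrt t$ growth and the return/occupation estimates near low heights) sharply enough to yield the exact moment condition \eqref{eq:dev-ui}. In the Brownian case these are classical first-passage computations; here the type-dependence of the motions $\chi_i$ forces genuine use of MAP theory — precisely the obstruction flagged by Blath et al.\ \cite{BlaEtAl25} that this paper is designed to remove. A secondary, more routine point is the $\rho\to\infty$ passage in Step~1, i.e.\ verifying $\bP_{x,i}(\rho_0<\infty)=1$ and the negligibility of the truncation error.
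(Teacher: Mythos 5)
Your proposal follows essentially the same route as the paper's proof: your $D^{(\rho)}$ is exactly the paper's truncated derivative martingale $Z^{(b)}_{\theta^*}$ built from the MAP renewal function, the a.s.\ convergence is obtained by the same comparison on the event that the whole system stays above $-\rho$ (using $W_{\theta^*}(t)\to 0$ and $R_j(y)\sim c_{\text{ren}}\,y$), and the positivity under \eqref{eq:dev-ui} is proved by the same spine change of measure with the MAP conditioned to stay positive, followed by the same extinction-equation argument via \eqref{eqn:surv_prob}. The only cosmetic deviations (your stopping-line bound for $\bP(\rho_0\ge\rho)$ in place of the paper's use of \eqref{eq:min_-infty}, and the heuristic that the conditioned spine ``returns to $O(1)$ heights infinitely often'' --- in fact it drifts to $+\infty$, the moment condition entering instead through the perpetual-integral/occupation estimate of Lemma~\ref{lem:integral-cond}) do not change the substance.
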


The FKPP equation is a well-known reaction-diffusion equation that appears in population genetics and ecology. Its connection to branching processes was first studied via probabilistic tools in \cite{Mckean75}. Since then, it has been widely studied in probability, particularly in branching systems, see, for example, \cite{Champneys95}, \cite{KLMR12}, \cite{KYPRIANOU04}, \cite{LalleySellke87} and \cite{Ren2014}. In the study of FKPP equation, the existence, uniqueness and the asymptotic of the travelling wave solutions are particularly interested. One can find a ``common pattern'' in these works of FKPP equations. In the supercritical regime, the travelling wave solutions are often strongly connected to the limit of the additive martingale; while in the critical regime, it is related to the limit of the derivative martingale; while in the subcritical regime, there is no travelling wave solution.

For $i\in \cI$, let $g_i(s) := \sum_{k\ge 0} \mu_i(k) s^k$, $s\in [0,1]$, be the generating function of $\mu_i$, and $\mathcal{A}_i$ be the generator given in \eqref{eqn:generator Ai}. Our model is associated with the following FKPP type equation on $\RR_+\times\mathbb{R}\times \cI$: 
\begin{align}\label{FKPP}
\frac{\partial \mathbf{u}(t, x,i)}{\partial t}= & \, \mathcal{A}_i \mathbf{u}(t, x, i)+   \sum_{j\ne i} q_{ij} \left(\int_{\mathbb{R}} \mathbf{u}(t,x\!+ \!y, j)\mathbb{P}(U_{ij}\!\in \!\dd y) - \mathbf{u}(t,x,i)\right) \\
&\,+ \beta_i \big( g_i(\mathbf{u}(t,x,i)) - \mathbf{u}(t,x,i) \big).
\end{align}
Indeed, we prove in Proposition~\ref{prop:FKPP} that 
$\mathbf{u}(t,x,i) := \mathbf{E}_{x,i} \left[ \prod_{u\in \mathcal{N}_t} \mathbf{u}(0,X_u(t),J_u(t)) \right]$ gives 
a mild solution of \eqref{FKPP}, in the sense that it satisfies the following integral equation: 
\begin{align}\label{int-eq}
    \mathbf{u}(t, x,i)  = &\, \EE_{x,i} \left[\mathbf{u}(0,\xi_i(t),i) \right]  +\int_0^{t}  \mathbb{E}_{x,i}   \Bigg[\beta_i g_i \left(\mathbf{u}\left(s,\xi_i(t\!-\!s),i\right)\right)+  \\
    &  \sum_{j\ne i} q_{ij} \int_{\mathbb{R}} \mathbf{u}\left(s,y\!+\!\xi_i(t\!-\!s),j\right)\mathbb{P}(U_{ij}\in \dd y)  - (q_i +\beta_i) \mathbf{u}\left(s,\xi_i(t\!-\!s),i\right) \Bigg] \dd s.
\end{align}
If we look for constant solutions of the FKPP equation \eqref{FKPP} of the form $ \mathbf{u}(t, x,i)\equiv s_i\in [0,1]$ for $i\in \cI$, then the problem reduces to solving the equation \eqref{eqn:surv_prob}. As we have seen, it admits only two constant solutions in $[0,1]^{\mathtt{d}}$: $\vec{\bq}=(\bq_1, \ldots \bq_{\mathtt{d}})$ and $(1,\ldots, 1)$. 
We are interested in studying non-constant, travelling wave solutions that connect these two equilibria. To this end, let us introduce the following class of functions $\mathbb{R}\!\times\! \cI \to [0,1]$: 
\begin{align}\label{eq:TOne}
     &\TOne = \left\{f \;\middle|\;  \forall i\in \cI, x\mapsto f(x,i)  \text{ is non-decreasing in } x, \right.  \\
     &\hspace{6em}\left. \lim_{x\to -\infty} f(x,i)= \bq_i \text{ and }\lim_{x\to \infty} f(x,i)=1\right\}. 
\end{align}
A travelling wave solution is then defined as follows.
\begin{definition}[Travelling waves]
    Let $\rho\in \RR$ and $\Phi\in \TOne$. If $u(t,x,i):= \Phi(x - \rho t,i)$ is a solution of the FKPP equation \eqref{int-eq}, then we say that $\Phi$ is  \emph{a travelling wave solution with speed $\rho$}. 
\end{definition}

\begin{theorem}[Existence and uniqueness of travelling waves]\label{thm:TW} 
Assume \ref{A1}, \ref{A2}, and $\theta\in (0,\bar{\theta})$. 
\begin{enumerate}[label = (\roman*)]
    \item (Supercritical regime) If $\theta \lambda'(\theta) <\lambda(\theta)$ and $\sum_{k\geq 1}  (k\log k)\mu_j(k)<\infty$ for all $j\in\cI$, then the function $\Phi_{\theta}(x,i) = \bE_{x,i} \big[e^{-W_{\theta}(\infty)}\big] = \bE_{0,i}\left[ e^{-e^{-\theta x} W_{\theta}(\infty)} \right]$ is a travelling wave solution with speed $\rho_{\theta}= \frac{\lambda(\theta)}{\theta}>\frac{\lambda(\theta^*)}{\theta^*}$. 
    \item (Critical regime) If $\theta^* \lambda'(\theta^*) =\lambda(\theta^*)$ and $\sum_{k\geq 1}  k(\log k)^2 \mu_j(k) <\infty$ for all $j\in\cI$, then $\Phi_{\theta^*}(x,i) = \bE_{x,i} \big[e^{-Z_{\theta^*}(\infty)}\big] = \bE_{0,i}\left[ e^{-e^{-\theta x} Z_{\theta^*}(\infty)} \right]$ is a travelling wave solution with speed $\rho_{\theta^*} = \frac{\lambda(\theta^*)}{\theta^*}$. 
    \item (Subcritical regime) There are no travelling wave solutions with speed $\rho<\frac{\lambda(\theta^*)}{\theta^*}$. 
\end{enumerate} 
Suppose furthermore that the branching MAP is spectrally negative (that is the MAP $(\chi,\Theta)$ associated with $((\chi_i)_{i\in \cI}, \Theta, (U_{ij})_{i,j\in \cI})$  has no positive jumps), then the travelling wave solutions given in the supercritical and critical regimes are unique in $\mathcal{T}_1$.
\end{theorem}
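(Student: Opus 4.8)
The plan is to handle the four assertions by the standard probabilistic dictionary between travelling waves and martingale limits, exploiting the spine decomposition throughout. For part (i), I would first check that $\Phi_\theta(x,i) := \bE_{x,i}[e^{-W_\theta(\infty)}]$ satisfies the branching property $\mathbf{u}(t,x,i) = \bE_{x,i}[\prod_{u\in\cN_t}\mathbf{u}(0,X_u(t),J_u(t))]$, which via Proposition~\ref{prop:FKPP} makes $u(t,x,i)=\Phi_\theta(x-\rho_\theta t,i)$ a mild solution of \eqref{FKPP}; this uses only the branching Markov property and the fact that $W_\theta(t)\to W_\theta(\infty)$ in $\mathcal L^1$ under the stated $(k\log k)$ moment condition (Theorem~\ref{Biggins_theorem}). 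The travelling-wave structure $\Phi_\theta(x,i) = \bE_{0,i}[e^{-e^{-\theta x}W_\theta(\infty)}]$ comes from the scaling identity $W_\theta(\infty)$ under $\bP_{x,i}$ equals $e^{-\theta x}$ times $W_\theta(\infty)$ under $\bP_{0,i}$, an immediate consequence of the definition of $W_\theta$ and the additivity of the MAP. To see $\Phi_\theta\in\TOne$: monotonicity in $x$ is clear from the scaling representation; the limit as $x\to+\infty$ is $1$ because $e^{-\theta x}W_\theta(\infty)\to 0$; and the limit as $x\to-\infty$ is $\bE_{0,i}[\ind{W_\theta(\infty)=0}] = \bP_{0,i}(\mathscr S^c) = \bq_i$ using the last line of Theorem~\ref{Biggins_theorem}. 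Part (ii) is identical in structure, replacing $W_\theta$ by $Z_{\theta^*}$, using Theorem~\ref{thm:asymptotTruncated} for the a.s.\ convergence and the $\{Z_{\theta^*}(\infty)>0\}\bigtriangleup\mathscr S$ statement, together with the analogous scaling $Z_{\theta^*}(\infty)|_{\bP_{x,i}} = e^{-\theta^* x}Z_{\theta^*}(\infty)|_{\bP_{0,i}}$; one must also verify that $\Phi_{\theta^*}$ genuinely solves \eqref{int-eq}, which again follows from the branching property applied to the a.s.-convergent martingale $Z_{\theta^*}$ after checking the requisite uniform integrability along the relevant line (this is where \eqref{eq:dev-ui} enters).

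For part (iii), the non-existence of travelling waves with speed $\rho<\lambda(\theta^*)/\theta^*$, I would argue by contradiction: given $\Phi\in\TOne$ with speed $\rho$, the branching property makes $Y(t):=\prod_{u\in\cN_t}\Phi(X_u(t)-\rho(t+s),J_u(t))$ a bounded $\bP_{x,i}$-martingale for each fixed $s$, hence a.s.\ convergent; but on $\mathscr S$, Corollary~\ref{thm:speed of Lt} says $\min_{u\in\cN_t}X_u(t)/t\to-\lambda(\theta^*)/\theta^*<-\rho$, so $\min_u(X_u(t)-\rho t)\to-\infty$, forcing at least one factor $\Phi(\cdot,\cdot)\to\bq_{J}$ and, with a little more care using that the number of such extreme particles tends to infinity, the product $\to 0$; comparing with the martingale limit at time $0$, which is $\Phi(x-\rho s,i)>\bq_i$ for $s$ suitably chosen (recall $\Phi$ is non-constant and $\to 1$ at $+\infty$), yields a contradiction with $\bE_{x,i}[Y(\infty)]=\Phi(x-\rho s,i)$ once $\bP_{x,i}(\mathscr S)>0$ (which holds by \ref{A1}). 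I expect this step to need the quantitative statement that infinitely many particles drift to $-\infty$ linearly, rather than just the leftmost one, which can be extracted from the $\mathcal L^1$-convergence of $W_\theta$ in the supercritical regime or from a first-moment computation using Theorem~\ref{thm:matrix-exponent}.

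The uniqueness statement under spectral negativity is the main obstacle and the part requiring the most work. The strategy is the classical one (cf.\ \cite{KYPRIANOU04}, \cite{Ren2014}): given any $\Phi\in\TOne$ that is a travelling wave with speed $\rho_\theta$ (so necessarily $\rho_\theta\ge\lambda(\theta^*)/\theta^*$ by part (iii), pinning down $\theta\le\theta^*$), one shows $\Phi$ must coincide with the probabilistic solution constructed in (i)--(ii). The absence of positive jumps is used to control the running maximum of the spine MAP and to show, via a renewal/Tauberian argument on the associated descending ladder structure of the spectrally negative MAP, that $-\log\Phi(x,J)$ has exact exponential decay rate $\theta$ as $x\to+\infty$ with a genuinely positive (non-degenerate, non-oscillating) constant; this is precisely the place where the MAP machinery (fluctuation theory for spectrally negative MAPs, e.g.\ the analogue of \cite{Iva2011,DLK17}) replaces the one-dimensional arguments available for ordinary BBM. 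Having identified the tail constant, one represents $\Phi$ through the martingale: the bounded martingale $\prod_{u\in\cN_t}\Phi(X_u(t)-\rho_\theta t,J_u(t))$ converges, its limit is shown to be $e^{-cW_\theta(\infty)}$ (resp.\ $e^{-cZ_{\theta^*}(\infty)}$) by matching exponential tails and using the uniqueness of the martingale limit up to scalar multiples, and absorbing $c$ into a shift of $x$ gives $\Phi=\Phi_\theta$. The delicate points are: (a) ruling out oscillation in the tail of $1-\Phi$ — handled by spectral negativity, which makes the relevant ladder height process have no atoms obstructing the renewal theorem; and (b) justifying the identification of the martingale limit with the additive/derivative martingale limit, which requires knowing these limits are characterized, up to positive multiplicative constants, by their law, a fact one gets from the spine decomposition together with the $0$--$1$ law on $\mathscr S$.
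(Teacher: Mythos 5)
Parts (i) and (ii) of your proposal follow the paper's own route: scaling $W_\theta(\infty)|_{\bP_{x,i}} \overset{d}{=} e^{-\theta x}W_\theta(\infty)|_{\bP_{0,i}}$, the self-similarity identity $W_\theta(\infty) = e^{-\lambda(\theta)s}\sum_{u\in\cN_s}W_\theta^{(u)}(\infty)$, and Proposition~\ref{prop:FKPP}; the paper organizes this via the martingale-problem equivalence (Proposition~\ref{prop_martingale_problem}) but the content is the same.

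For part (iii) your argument has a genuine gap. You assert that on $\mathscr{S}$ the product $\prod_{u\in\cN_t}\Phi(X_u(t)+\rho t, J_u(t))\to 0$, which would require showing that \emph{unboundedly many} particles have $X_u(t)+\rho t$ drifting to $-\infty$, not just the leftmost; you flag this yourself (``with a little more care'') but it is not a triviality, and if any $\bq_j>0$ the product need not vanish at all. The paper avoids this entirely: since every factor is $\le 1$, the whole product is dominated by the single factor at the leftmost particle, so by Fatou one gets the \emph{uniform} bound $\Phi(x,i)\le \bq_i+(1-\bq_i)\max_j\bq_j$ for all $x$. Taking $i_0$ with $\bq_{i_0}=\max_j\bq_j$ gives $\Phi(x,i_0)\le 1-(1-\bq_{i_0})^2<1$ uniformly in $x$, contradicting $\lim_{x\to\infty}\Phi(x,i_0)=1$. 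No claim about the product tending to zero, and no statement about multiple particles, is needed. (Your ``comparing with the martingale limit at time $0$, which is $\Phi(x-\rho s,i)$ for $s$ suitably chosen'' is also off: the martingale starts at $\Phi(x,i)$, and no auxiliary $s$ is required.)

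For the uniqueness part you propose an asymptotic/Tauberian route: establish the exact exponential decay of $1-\Phi$ (or $-\log\Phi$) via fluctuation theory for the spectrally negative spine MAP and then match tails to identify the martingale limit. This is \emph{not} the route the paper takes. The paper instead follows the Kyprianou--Ren/Yang stopping-line strategy: it introduces the space-time barrier $\Gamma^{(x,\rho_\theta)}$ and shows that the counts $(\#C_j(x,\rho_\theta))_{j\in\cI}$, indexed by $x$, form a supercritical continuous-time multitype branching process; spectral negativity enters precisely to guarantee that particles hit the barrier continuously (so the stopping line is well-defined and each line of descent is cut exactly once). The Kesten--Stigum theorem for this embedded branching process, combined with the convergence of the stopping-line version $\widetilde W_x(\rho_\theta)$ (resp.\ $Z^{(b)}_x(\rho_{\theta^*})$) of the additive (resp.\ truncated derivative) martingale, then pins down $\Phi$ up to the constant $\alpha$ (resp.\ $\beta$). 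By contrast, the paper explicitly lists the tail-asymptotics approach (\`a la Alsmeyer--Mallein) as a \emph{future direction} for removing the spectral-negativity hypothesis, which suggests the authors view it as substantially more work in this multitype MAP setting. Your sketch for this part is also too coarse to constitute a proof — in particular, ``ruling out oscillation in the tail'' and ``identifying the martingale limit by matching exponential tails'' are exactly the hard points, and the paper resolves them concretely through the stopping-line/Galton--Watson structure rather than deferring them.
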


\subsection{Related works}
Branching Brownian motion (BBM) and branching random walks (BRW) are canonical probabilistic models, with significant applications in statistical physics and population biology. These processes provide insights for understanding phenomena ranging from the extremes of log-correlated random fields and the structure of mean-field spin glasses to the dynamics described by diffusion-reaction equations. The classical case of a single particle type has been extensively studied. For a comprehensive treatment, we refer the reader to \cite{ShiBRW} and \cite{Bovier}.  

For branching L\'{e}vy process, there have been a recent increasing interests; see \cite{Profeta24,HJRS25,HRS24,RSZ25}, to list just a few. 
An extended model with infinite branching rate has been introduced by \cite{BerMal19} and the martingale convergence  has been studied in  \cite{Bertoin2018,MalleinShi2023}.

The theory of branching processes has been  generalized to the multitype case. Kesten and Stigum \cite{Kesten1966} established a key limit theorem for the discrete-time multitype branching processes, using the famous $L\log L$ moment condition for the reproduction law.  Athreya \cite{Athreya1968} proved this result for continuous-time settings. In the context of branching Brownian motion, 
Ren and Yang \cite{Ren2014} studied irreducible multitype cases, offering a probabilistic proof for the existence, uniqueness and asymptotic behaviours of the corresponding travelling wave solutions. More recently, Hou et al.\ \cite{Hou2024} proved that the extremal process of an irreducible multitype branching Brownian motion converges weakly to a cluster point process. For reducible cases, two-type branching Brownian motions have been investigated in \cite{BelloumMallein2021} and \cite{MaRen2024}. Note that, a common assumption in these works on multitype BBM has been that all particle types share the same underlying motion process. This is distinct from our setting, where the underlying processes differ among types.

A further generalization considers types in a general measurable space, which introduces significant new challenges, even without the presence of the spatial motion. There are significant new challenges. For the analysis of survival properties, we refer to \cite{AndDuc25} for an $L\log L$ condition in the model with countably many types, and to  \cite{MaiTou25} for a comprehensive treatment of the case with uncountably many types.

\subsection{Examples: two-type BBMs}
To make connections with results in \cite{BlaEtAl25, Champneys95}, let us exam in detail a specific case of our model: a two-type branching Brownian motion. Particles have two types $\{1,2\}$. For $i=1,2$, a type $i$ particle undergoes a Brownian motion with drift $\ta_i\in \mathbb{R}$ and variance $\sigma^2_i>0$; it branches at rate $\beta_i\ge 0$  into offspring of the same type at its current location, with offspring numbers distributed according to $\mu_i$.  Furthermore, at rate $q_1 > 0$ (for type $1$) or $q_2 > 0$ (for type $2$), the particle switches to the other type without displacement. All particles evolve independently. Let $m_i = \sum_{k\ge 1} \mu_i(k) k$, $i=1,2$. Assume $\beta_1 (m_1-1)+\beta_2(m_2-1)>0$ and $\sigma_1^2+\sigma_2^2>0$ to avoid degenerate cases.

The matrix exponent in Theorem \ref{thm:matrix-exponent} is given by
\begin{align}
    \mathcal{M}(\theta) =\begin{pmatrix}
        f_1(\theta) &  q_1 \\
         q_2 & f_2(\theta)\\
    \end{pmatrix}, 
\end{align}
where $f_i(\theta):= \frac{1}{2}\sigma_i^2\theta^2 -\ta_i \theta -q_i+\beta_i (m_i-1)$, $i= 1,2$. 
Then explicit calculation shows that the PF eigenvalue is 
\begin{equation*}
    \lambda(\theta) = \frac{1}{2} \left( f_1(\theta)+f_2(\theta) + \sqrt{(f_1(\theta)-f_2(\theta))^2 + 4  c_1 c_2} \right), 
\end{equation*}
with its corresponding PF eigenvector 
\begin{equation*}
V_1(\theta) =1, \quad V_2(\theta) = \frac{1}{q_1} (\lambda(\theta)  - f_1(\theta)).
\end{equation*}
Then $\mathcal{M}(\theta)$ and $\lambda(\theta)$ are finite for every $\theta\in (0,+\infty)$. 
We check by the explicit formula of $\lambda(\theta)$ that $\lambda(0+)>0$, and when $\theta \to +\infty$, $\frac{\lambda(\theta)}{\theta} \to +\infty$. 
There is a unique minimum in $(0,+\infty)$ of the function $q \mapsto \frac{\lambda(q)}{q}$, achieved at the solution $\theta^*$ of $\lambda'(\theta^*)\theta^* = \lambda(\theta^*)$.
Therefore assumptions \ref{A1} and \ref{A2} are satisfied. 

For $i\in \{1,2\}$, recall in \eqref{FKPP}, $g_i(s)=\sum_{k=0}^{\infty}\mu_i(k)s^{k}$. The corresponding FKPP equation is given by
\begin{equation}
    \left\{
    \begin{aligned}
        \frac{\partial u_1(t,x)}{\partial t} &= \frac{1}{2}\sigma_1^2\frac{\partial^2 u_1(t,x)}{\partial x^2} + \beta_1\big(g_1(u_1(t,x))-u_1(t,x)\big)-q_1u_1(t,x)+q_1u_2(t,x),\\
        \frac{\partial u_2(t,x)}{\partial t} &= \frac{1}{2}\sigma_2^2\frac{\partial^2 u_2(t,x)}{\partial x^2} + \beta_2\big(g_2(u_2(t,x))-u_2(t,x)\big)+q_2u_1(t,x)-q_2u_2(t,x).
    \end{aligned}
    \right.
\end{equation}

Having checked all the assumptions, we conclude that the statements in Theorem \ref{Biggins_theorem}, Corollary \ref{thm:speed of Lt}, Theorem~\ref{thm:asymptotTruncated} and Theorem~\ref{thm:TW} (including the uniqueness part) all hold for the two-type branching Brownian motion.

\paragraph{A connection with in \cite{BlaEtAl25}}
This formulation of two-type branching Brownian motions encompasses the \emph{on-off branching Brownian motion} models proposed by \cite{BlaEtAl25},  where  the two types are referred to as  ``active'' and ``dormant''. When $\beta_2 = \sigma_2 = 0$,  we obtain the Variant I model defined by \cite[(1.16)]{BlaEtAl25}; when $\beta_2 = \sigma_1 = 0$, we obtain the variant II model in  \cite[(1.19)]{BlaEtAl25}.

In \cite{BlaEtAl25}, although not explicitly stated, the last formula on page~10 has assumed $\sum k^2\mu_1(k)<\infty$. 
This already implies $\sum_{k} (k\log^2 k) \mu_j(k) <\infty$, for $j\in\{1,2\}$.
Compared to \cite{BlaEtAl25}, our work provides several extensions:
\begin{itemize}
    \item For the linear speed of the leftmost particle, the result matches \cite[Theorem 1.9]{BlaEtAl25}.
    \item When $\theta>\theta^*$, we deduce the same results on the additive martingale~\cite[Proposition 2.7]{BlaEtAl25} and travelling waves~\cite[Theorem 1.13]{BlaEtAl25}.
    \item When $0<\theta<\theta^*$, we have the same results on the additive martingale~\cite[Theorem 2.3 and Proposition 2.4]{BlaEtAl25} and travelling waves~\cite[Theorem 1.10]{BlaEtAl25}.
    \item When $\theta = \theta^*$,  we include new results on martingale convergence, and also the existence and uniqueness of the travelling wave solutions, answering the open questions in \cite{BlaEtAl25}.
\end{itemize}

\paragraph{A connection with \cite{Champneys95}}
The model in  \cite{Champneys95} is recovered by setting the drifts to zero ($a_i=0$) and requiring strictly positive branching rates ($\beta_i > 0$ for $i=1,2$).
In \cite{Champneys95}, Since the production law is given by $\mu_1(2) = \mu_2(2) = 1$, it also satisfies $\ind{{\beta}_{i}m_i >0}\sum_{k} (k\log^2 k) \mu_i(k) <\infty$ for $i\in \{1,2\}$. Our results are consistent with  \cite{Champneys95}:
\begin{itemize}
    \item For the linear speed of the leftmost particle, the result matches the speed in \cite[Theorem 1.41]{Champneys95}.
    \item When $\theta>\theta^*$, we include the non-existence of the travelling wave solutions.
     \item When $0<\theta<\theta^*$, the $\mathcal{L}^1$-convergence is consistent with \cite[Theorem 1.39]{Champneys95}. We also deduce the same existence and uniqueness results in \cite[Theorem 1.41]{Champneys95}.
    \item When $\theta = \theta^*$, we include new results on martingale convergence, and also the existence and uniqueness of the travelling wave solutions.
\end{itemize}

\subsection{Perspectives and further questions}
\paragraph{1. Generalization to non-local branching with infinite branching rate}
It is straightforward to extend our models to include non-local branching governed by a point process. We can further accommodate infinite branching rates as in \cite{BerMal19}, using similar approximation methods given there. This generalization would establish a connection with multitype growth-fragmentations \cite{DaSilva23,DaSilvaPardo24}. 

\paragraph{2. Necessary conditions for the non-triviality of the derivative martingale limit}
For the single-type BBM case, \eqref{eq:dev-ui} has been proven to be necessary and sufficient for the critical derivative martingale to converge to a non-trivial limit in \cite{YangRen11}; similar results are also known as the A\"id\'ekon--Chen condition for single-type BRWs \cite{Aidekon13,Chen15}, and for branching Lévy processes with infinite branching rates \cite{MalleinShi2023}. 
We conjecture that an analogous statement still holds for our multitype model, namely the condition \eqref{eq:dev-ui} is both necessary and sufficient. 
Indeed, we believe that  the methods from \cite{MalleinShi2023} should be applicable to our model; however, a full proof  would require a further study of the perpetual integral of a conditioned MAP.

\paragraph{3. Uniqueness of travelling waves}
In Theorem~\ref{thm:TW}, the assumption of spectrally negative jumps is used to establish uniqueness. We believe this assumption is redundant.  Extending the result to processes with two-sided jumps requires a more refined analysis. We expect that the techniques from \cite{AlsMal22}, which first establishes asymptotics of the travelling waves and then use these to prove  uniqueness, are applicable. 

\paragraph{4. Finer study of the leftmost position and extremal process} It would be interesting to develop a refined analysis of the leftmost position, e.g.\ by studying the convergence of $\min_{u\in\cN_t} X_u(t) + \frac{\lambda(\theta^*)}{\theta^*}t$ as well as the extremal process. 
This is closely related to the convergence of FKPP solutions to travelling waves and the precise information on the front propagation.  
The spine decomposition in the current work allows the methods in \cite{ShiBRW} to be adapted to the multitype setting.

\paragraph{5. Infinitely many types} 
A natural generalization is to consider more general type spaces, with countable or uncountable infinitely many types. 
 Comparing with finitely many types, this can lead to significantly different behaviours; for example, local extinction of each type would no longer be equivalent to global extinction of the entire population \cite{AndDuc25}.   This framework naturally connects to heterogeneous models of spatially dependent branching and movement.  Developing such an extension would require studies on general Markov additive processes, where the underlying
modulating processes are general Markov processes,  as opposed to Markov chains on a finite space; see \cite{MaiTou25} for recent development.

\subsection{Organization of the paper}
The remaining of this work is organized as follows. 
In Section~\ref{section: spine decomposition}, we first introduce preliminary tools for MAPs and establish the spine decomposition theorem. Then we apply the spine decomposition to deduce the $\mathcal{L}^1$-convergence of the additive martingales (Theorem~\ref{Biggins_theorem}) and determine the velocity of the leftmost particle (Corollary \ref{thm:speed of Lt}).
In Section~\ref{section: the derivative martingales}, we prove Theorem~\ref{thm:asymptotTruncated} the convergence of the critical derivative martingale and give a sufficient condition for the limit to be non-trivial. In Section~\ref{section: FKPP equations and travelling waves}, we use the martingale limits to give a probabilistic representation of the travelling wave solutions, and therefore prove the existence and uniqueness (Theorem \ref{thm:TW}).  
The proofs treat the supercritical and the critical regimes separately.
In Section~\ref{section:proof_spine}, we give detailed proofs of the spine decomposition theorem, including a ``Girsanov transformation'' of MAPs.

\section{The Spine decomposition}\label{section: spine decomposition}
\subsection{Preliminaries on MAPs}\label{sec:MAPs}
This section provides the necessary preliminaries on Markov additive processes (MAPs) defined as in Definition~\ref{def:MAP}. We refer to \cite[Chapter~2]{Iva2011} and \cite[Appendix]{DLK17} for detailed discussions on this topic.

Recall from Proposition~\ref{prop:rep of MAP} that a MAP  $(\chi,\Theta)$ is associated with a family of L\'evy processes $(\chi_i)_{i\in \cI}$ with respective Laplace exponents $(\phi_i)_{i\in \cI}$, a Markov chain with intensity matrix $\Theta$ and a family of random variables $(U_{ij})_{i,j\in \cI}$ with Laplace transform $G = (G_{ij})_{i,j\in \cI}$. 
The matrix exponent of the MAP $F(\alpha)$ is given by \eqref{eqn:MAP matrix exponent}.
Since $\Theta$ is irreducible, the entries of the matrix $e^{F(\alpha)t}$ are all strictly positive. From the Perron-Frobenius (PF) theory, $F(\alpha)$ admits a PF eigenvalue which is real and larger than the real part of any other eigenvalues; see for example \cite[ Theorem 1.1]{seneta06}. Based on the PF eigenvalue, we have the following law of large numbers.
\begin{proposition}[{ \cite[Propositions 2.13, 2.15 and Lemma 2.14]{Iva2011}}]\label{speed of MAP} 
    Suppose   $(\chi(t), \Theta(t))_{t\ge 0}$   is a MAP
    with matrix exponent $F(\alpha) = \diag(\phi_i(\alpha))_{i\in\mathcal{I}} + Q\circ G(\alpha)$. 
    Let $\gamma(\alpha)$ be the Perron-Frobenius eigenvalue of $F(\alpha)$. Then $\gamma(\alpha)$ is infinitely differentiable for $\alpha>0$, and one can define $\gamma'(0)$ as the right derivative. 
    Moreover, it holds that $\PP_{x,i}$-a.s.
    \[
    \lim_{t\to\infty}\frac{\chi(t)}{t} = -\gamma'(0) = \mathbb{E}_{\pi}[\chi(1)],
    \]
    for all $i\in\mathcal{I}$ and $x\in \RR$, where $\pi = (\pi_i)_{i\in\mathcal{I}}$ is the stationary distribution of Markov chain $\Theta$. Moreover, when the MAP is not degenerate (not constant) and $\mathbb{E}_\pi[\chi(1)]=0$, it holds that $\PP_{x,i}$-a.s. 
    \[\limsup_{t\to \infty}\chi(t) = +\infty,\ \liminf_{t\to \infty}\chi(t) = -\infty.\]
\end{proposition}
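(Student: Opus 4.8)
The plan is to combine two ingredients: analytic perturbation theory for the Perron--Frobenius (PF) eigenvalue of $F(\alpha)$, and a renewal (cycle) decomposition of the MAP at a fixed state, which reduces both the law of large numbers and the oscillation statement to classical facts about i.i.d.\ sums and one-dimensional random walks. For differentiability, note that on the interior of its domain each $\phi_i$ is real-analytic (a Laplace exponent restricted to an interval of finiteness) and each $G_{ij}$ is real-analytic likewise, so $\alpha\mapsto F(\alpha)=\diag(\phi_i(\alpha))_{i\in\cI}+Q\circ G(\alpha)$ has real-analytic entries for $\alpha>0$. Since $\Theta$ is irreducible, $e^{F(\alpha)t}$ has strictly positive entries, so by the PF theorem $\gamma(\alpha)$ is an algebraically simple dominant eigenvalue, hence a simple root of the characteristic polynomial; the implicit function theorem applied to $\det(F(\alpha)-\gamma I)=0$ (equivalently, Kato's analytic perturbation theory) then shows $\gamma$ is real-analytic, in particular $C^\infty$, on $(0,\infty)$. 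At $\alpha=0$ one has $F(0)=Q$, $\gamma(0)=0$, with right PF eigenvector $\mathbf 1=(1,\dots,1)^\top$ and left PF eigenvector $\pi$; the right derivatives $\phi_i'(0+)\in[-\infty,\infty)$ and $G_{ij}'(0+)\in\RR$ exist by convexity/monotone convergence, so the same one-sided argument gives the right derivative $\gamma'(0+)$ (in the extended sense), and the standard eigenvalue-perturbation formula $\gamma'(\alpha)=u(\alpha)F'(\alpha)v(\alpha)$ (normalizing $u(\alpha)v(\alpha)=1$) at $\alpha=0$ yields $\gamma'(0)=\pi F'(0)\mathbf 1=\sum_i\pi_i\big(\phi_i'(0)+\sum_{j\ne i}q_{ij}G_{ij}'(0)\big)=-\sum_i\pi_i\big(\EE[\chi_i(1)]+\sum_{j\ne i}q_{ij}\EE[U_{ij}]\big)$. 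By the representation in Proposition~\ref{prop:rep of MAP} the right-hand sum is exactly $\EE_\pi[\chi(1)]$ (the stationary drift of $\chi$ is the $\pi$-average of the per-state Lévy drifts plus the switching-jump contributions), so $-\gamma'(0)=\EE_\pi[\chi(1)]$.

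For the strong law, fix the state $1\in\cI$ and set $\tau_0:=\inf\{t\ge0:\Theta(t)=1\}$ and $\tau_{n+1}:=\inf\{t>\tau_n:\Theta(t^-)\ne1,\ \Theta(t)=1\}$, the successive returns to $1$, all a.s.\ finite by irreducibility. By the strong Markov property together with Definition~\ref{def:MAP}, the cycles $\big(\tau_{n+1}-\tau_n,\ (\chi(\tau_n+s)-\chi(\tau_n))_{0\le s\le\tau_{n+1}-\tau_n}\big)$, $n\ge1$, are i.i.d., and the first cycle (before $\tau_1$) is independent of them. The cycle length has all exponential moments (finite irreducible chain), so $C:=\EE[\tau_2-\tau_1]\in(0,\infty)$; assuming first that $\EE[\chi_i(1)]$ is finite for all $i$ (so that $D:=\EE[\chi(\tau_2)-\chi(\tau_1)]$ is finite), the SLLN gives $\chi(\tau_n)/n\to D$ and $\tau_n/n\to C$ a.s., hence $\chi(\tau_n)/\tau_n\to D/C$. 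A renewal-reward computation—the expected time in state $i$ per cycle is $C\pi_i$ and the expected number of $i\to j$ switches per cycle is $C\pi_i q_{ij}$—shows $D=C\,\EE_\pi[\chi(1)]$, i.e.\ $D/C=-\gamma'(0)$. To pass from the embedded times to all $t$, for $t\in[\tau_n,\tau_{n+1})$ write $|\chi(t)-\chi(\tau_n)|\le M_n:=\sup_{0\le s\le\tau_{n+1}-\tau_n}|\chi(\tau_n+s)-\chi(\tau_n)|$; the $M_n$ ($n\ge1$) are i.i.d.\ with $\EE[M_1]<\infty$ (a cycle contains a geometric number of Lévy pieces run over independent exponential times, and $\EE[\sup_{0\le s\le E}|\chi_i(s)|]<\infty$ for $E$ exponential and $\chi_i$ of finite mean, via Doob's maximal inequality on the compensated martingale part plus the finite-variation drift and the exponential tail of $E$), so $M_n/n\to0$ a.s.; since $n\sim t/C$ this gives $M_n/t\to0$ and hence $\chi(t)/t\to-\gamma'(0)$ a.s., for every starting point $(x,i)$ (the transient up to $\tau_0$ is negligible). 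The remaining case $\EE_\pi[\chi(1)]=+\infty$ follows by monotone truncation of the positive jumps of $\chi$: the truncated MAP satisfies the law with a finite limit that tends to $+\infty$ as the truncation is removed, and $\chi$ dominates it.

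For the oscillation statement, suppose $\EE_\pi[\chi(1)]=0$ (so everything above is finite). The embedded walk $R_n:=\chi(\tau_n)$ has i.i.d.\ increments $Y_n=\chi(\tau_{n+1})-\chi(\tau_n)$ with $\EE[Y_1]=D=C\cdot(-\gamma'(0))=0$. If $Y_1$ is not a.s.\ constant, then by the Chung--Fuchs theorem a mean-zero one-dimensional random walk is recurrent, hence oscillating, so $\limsup_nR_n=+\infty$ and $\liminf_nR_n=-\infty$ a.s., and a fortiori $\limsup_t\chi(t)=+\infty$, $\liminf_t\chi(t)=-\infty$. If instead $Y_1\equiv0$, one works with the within-cycle running extremes $\overline M_n:=\sup_{0\le s\le\tau_{n+1}-\tau_n}(\chi(\tau_n+s)-\chi(\tau_n))$ and $\underline M_n$, which are i.i.d.: since the MAP is non-degenerate, over a cycle (which has unbounded length with positive probability) $\chi$ follows genuinely non-constant Lévy dynamics, so $\overline M_1$ is not essentially bounded, and the second Borel--Cantelli lemma gives $\limsup_n\overline M_n=+\infty$, hence $\limsup_t\chi(t)=+\infty$; symmetrically $\liminf_t\chi(t)=-\infty$.

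I expect the main obstacle to be twofold. First, the uniform integrability estimate $\EE[M_1]<\infty$ on the within-cycle oscillation—which is exactly what upgrades convergence along $(\tau_n)$ to convergence along all $t$—requires carefully combining maximal inequalities for the individual Lévy pieces with the exponential tail of the cycle length, and this is the one genuinely quantitative point. Second, the degenerate-embedded-walk case of the oscillation statement forces one to pin down the correct reading of ``non-degenerate'': a MAP with a.s.\ bounded paths (for instance piecewise-constant with switching jumps cancelling around every cycle) must be excluded; under the standing assumption of the paper that at least one $\chi_i$ is non-trivial, this pathology does not occur, and the within-cycle argument above indeed yields the unbounded oscillation.
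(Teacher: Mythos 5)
The paper does not prove this proposition: it is imported verbatim from Ivanovs \cite{Iva2011}. Your regenerative proof (cycles at successive returns to a fixed state, SLLN plus renewal--reward for the speed, simple-eigenvalue perturbation for $-\gamma'(0)=\mathbb{E}_\pi[\chi(1)]$, Chung--Fuchs for the oscillation) is therefore a self-contained alternative rather than a reconstruction, and the differentiability and LLN parts are essentially sound, modulo the estimate $\EE[M_1]<\infty$ on the within-cycle oscillation, which you correctly flag and sketch adequately (Doob on the small-jump martingale part, first moments of the big-jump part, geometric number of sojourns with exponential lengths), and modulo a slightly more careful treatment of the extended case $\mathbb{E}_\pi[\chi(1)]=+\infty$ (Jensen gives $\gamma(\alpha)\ge -\alpha\,\mathbb{E}_\pi[\chi^{(K)}(1)]$ for the truncated process, and $\gamma\le\gamma_K$ with $\gamma(0)=\gamma_K(0)=0$ gives $\gamma'(0+)\le\gamma_K'(0+)\to-\infty$, which pins down the derivative identity there).

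The genuine gap is in the degenerate branch of the oscillation argument. When the embedded increment $Y_1=\chi(\tau_2)-\chi(\tau_1)$ is a.s.\ zero, your claim that non-degeneracy of the MAP forces the within-cycle supremum $\overline{M}_1$ to be essentially unbounded does not follow, and it is false for general MAPs: take $\chi_i\equiv 0$ for all $i$, two states, deterministic switching jumps $U_{12}=+1$, $U_{21}=-1$. Then $\chi$ is non-constant, $\mathbb{E}_\pi[\chi(1)]=0$ and $Y_1\equiv 0$, yet $\chi$ is bounded, so both your claim about $\overline{M}_1$ and the stated conclusion fail; this confirms your suspicion that ``degenerate'' must be read so as to exclude such coboundary-type processes. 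Under the paper's standing assumption that some $\chi_{i_0}$ is non-constant, the correct repair is not to argue inside this branch but to show it is empty: condition on the skeleton of a cycle (or simply regenerate at $i_0$). Given the skeleton, the cycle displacement is a sum of independent terms --- the Lévy pieces evaluated at independent exponential sojourn times and the switching jumps --- and a sum of independent random variables is a.s.\ constant only if every term is; but $\chi_{i_0}$ run over an independent exponential time is non-degenerate whenever $\chi_{i_0}$ is non-constant, and irreducibility makes skeletons visiting $i_0$ have positive probability. Hence $Y_1\equiv 0$ cannot occur, the Chung--Fuchs branch always applies, and with that substitution your proof is complete. Note also that your in-branch heuristic would fail even in harmless-looking situations (e.g.\ if the only non-trivial Lévy component were a negative drift, the within-cycle supremum is essentially bounded), so the branch really must be eliminated rather than handled.
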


The following lemma for the perpetual integral of a MAP will be used to prove Theorem \ref{thm:L1_conv_Zb}.  For more studies on the perpetual integrals of L\'evy processes, see \cite{BDK,KolSav19}. 

\begin{lemma}\label{lem:integral-cond}
Let  $(\chi(t), \Theta(t))_{t\ge 0}$  be a MAP. Suppose  $\Theta$ is irreducible.
 Let $f\colon \RR_+ \times\cI \to \RR_+$ be a bounded non-negative measurable function. Suppose that $f$ is  eventually non-increasing and that
\begin{equation}\label{eq:f-int}
     \int_{\RR_+} f(x,k)  \dd x<\infty, \quad \forall k\in \cI. 
\end{equation}
Denote $\tau_0 := \inf\{t\ge 0: \chi(t) < 0\}$. Then we have, for any $x>0$ and $i\in \cI$,    
\begin{align*}
\EE_{x,i} \bigg[ \int_{\RR_+ }  f( \chi(s), \Theta(s))  \ind{\tau_0>s} \dd s \bigg] <\infty. 
\end{align*}
\end{lemma}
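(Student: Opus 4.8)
The plan is to reduce the perpetual-integral estimate to a statement about the occupation of a conditioned MAP, using the eventual monotonicity and integrability of $f$ to dominate the integrand by a function depending only on the position of $\chi$. Since $f$ is bounded, say $f\le C$, and eventually non-increasing, choose $x_0$ such that $f(\cdot,k)$ is non-increasing on $[x_0,\infty)$ for all $k\in\cI$ (there are finitely many types, so a common threshold exists). Define the envelope $\bar f(y):=C\wedge \max_{k\in\cI} f(\max(y,x_0),k)$ for $y\ge 0$ and $\bar f(y):=C$ for $y<0$; then $\bar f$ is non-increasing on $\RR$, bounded, and $\int_0^\infty \bar f(y)\,\dd y<\infty$ because on $[x_0,\infty)$ it is a finite maximum of the tails of the integrable functions $f(\cdot,k)$. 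It suffices to bound $\EE_{x,i}\big[\int_0^\infty \bar f(\chi(s))\,\ind{\tau_0>s}\,\dd s\big]$.

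Next I would invoke the structure of $\chi$ killed at $\tau_0$, i.e.\ the MAP restricted to the half-line $[0,\infty)$. The key input is Proposition~\ref{speed of MAP}: under irreducibility of $\Theta$, either $\EE_\pi[\chi(1)]\ne 0$, or $\EE_\pi[\chi(1)]=0$ and $\chi$ oscillates. If $\EE_\pi[\chi(1)]>0$, then $\chi(t)/t\to \EE_\pi[\chi(1)]>0$ a.s., so $\chi(t)\to+\infty$; combined with a large-deviation/renewal control on the amount of time $\chi$ spends in any bounded window $[y,y+1]$ before escaping, the expected occupation time of $[y,y+1]$ decays (uniformly geometrically in $y$ for $y$ large, and is finite for each fixed $y$ because the process is transient to $+\infty$), whence $\EE_{x,i}\big[\int_0^\infty \bar f(\chi(s))\,\dd s\big]\le \sum_{y\ge 0}\bar f(y)\,\sup_{x}\EE_{x,i}[\text{time in }[y,y+1]]<\infty$ using that $\bar f$ is non-increasing and summable. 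If $\EE_\pi[\chi(1)]=0$, the process oscillates, but we kill at $\tau_0$: then $\chi$ conditioned to stay non-negative behaves like a (transient) MAP drifting to $+\infty$ — this is the analogue of conditioning a centred Lévy process to stay positive — so the same occupation-time bound applies after an $h$-transform; alternatively, one bounds the killed occupation measure directly via a ladder-structure/excursion decomposition for MAPs, noting that the expected time in $[y,y+1]$ before reaching $(-\infty,0)$ is summable in $y$ precisely because the renewal (ascending ladder height) measure of a MAP has at most linear growth. The case $\EE_\pi[\chi(1)]<0$ is even easier: then $\tau_0<\infty$ a.s.\ and indeed $\EE_{x,i}[\tau_0]<\infty$, so $\EE_{x,i}\big[\int_0^\infty \bar f(\chi(s))\,\ind{\tau_0>s}\,\dd s\big]\le C\,\EE_{x,i}[\tau_0]<\infty$.

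The main obstacle I anticipate is making the occupation-time bound for the \emph{killed} MAP rigorous and uniform, particularly in the oscillating case $\EE_\pi[\chi(1)]=0$: one needs that $\EE_{x,i}\big[\int_0^{\tau_0}\ind{\chi(s)\in[y,y+1]}\,\dd s\big]$ is summable over $y\in\NN$, which is a statement about the potential (Green's) measure of the MAP killed on exiting $[0,\infty)$. I would handle this by appealing to the Wiener–Hopf/ladder-process theory for MAPs (as in \cite[Chapter~2]{Iva2011}, \cite[Appendix]{DLK17}): the killed potential density is controlled by the product of the ascending and descending renewal measures of the MAP, the descending one being a genuine probability-renewal object (since $\tau_0<\infty$ on each descending excursion) with bounded density, and the ascending renewal measure having at most linear growth on $[0,\infty)$; since $\bar f$ decays fast enough to have finite integral, $\sum_y \bar f(y)\cdot(1+y)$ need not be finite, so in fact I would want the sharper statement that the killed Green density is \emph{bounded} (not merely linearly growing) — which holds because killing at $\tau_0$ removes the linear-growth contribution — giving $\EE_{x,i}\big[\int_0^{\tau_0}\bar f(\chi(s))\,\dd s\big]\le \|\text{Green density}\|_\infty\int_0^\infty\bar f(y)\,\dd y+(\text{finite boundary term})<\infty$. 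If a clean reference for boundedness of the killed Green density of a MAP is not available in the cited sources, the fallback is to split $[0,\infty)=[0,x_0]\cup[x_0,\infty)$: on $[0,x_0]$ bound $f$ by $C$ and the occupation by $\EE_{x,i}[\text{time in }[0,x_0]\text{ before }\tau_0]<\infty$ (finite since the process is not absorbed in a bounded set), and on $[x_0,\infty)$ use monotonicity of $f$ together with a first-passage decomposition at successive up-crossings of the levels $x_0+1,x_0+2,\dots$, reducing to a geometric-type series once one shows the conditional probability of ever climbing one more unit, given the current excursion stays non-negative, is bounded away from $1$ — which is exactly where the killing (and hence transience of the conditioned process) is used.
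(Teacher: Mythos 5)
Your main line of attack is essentially the paper's own proof: the paper bounds the expected occupation of the killed MAP by the factorized expression of \cite[Theorem~27]{DLK17}, namely an integral of $f$ against the product of the \emph{descending} ladder potential measure restricted to $[0,x]$ (a finite measure for fixed starting point $x$) and the \emph{ascending} ladder potential measure, whose mass on any window of fixed length is uniformly bounded by the Markov renewal theorem (\cite{Lalley84}); combined with a bounded non-increasing integrable envelope $\widehat f\ge f$ (your $\bar f$), this is exactly the statement you were reaching for, that the killed Green measure puts uniformly bounded mass on unit windows so that $\int \bar f$ finite suffices. So the reference you were unsure about does exist, and no case distinction on the sign of $\EE_\pi[\chi(1)]$ is needed; the ladder factorization handles all cases at once. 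The paper additionally treats the lattice case separately, via strongly ascending and weakly descending ladder heights, which your sketch does not address.

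That said, several of your hedging remarks are incorrect and should not be leaned on. Under positive drift the expected occupation of $[y,y+1]$ does not decay geometrically in $y$; by the renewal theorem it converges to a positive constant, and only uniform boundedness is available (and needed). In the oscillating case, the claim that ``the same occupation-time bound applies after an $h$-transform'' is false as stated: the MAP conditioned to stay positive has expected window occupation growing linearly in $y$ (its Green measure picks up the factor $R_{\Theta}(\cdot)$), and one must carry the compensating weight $R_i(x)/R_{\Theta(s)}(\chi(s))$ when transferring the killed expectation to the conditioned law. Finally, the fallback via up-crossings of levels $x_0+1,x_0+2,\dots$ cannot produce a geometric series: the probability of climbing one more unit before entering $(-\infty,0)$ is not bounded away from $1$ — it tends to $1$, and the probability of ever reaching level $y$ before $\tau_0$ decays only polynomially (like a ratio of renewal functions), so that route does not close. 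Stick with the ladder-factorization argument; it is the one the paper makes rigorous.
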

\begin{proof}
In this proof we denote by $C_n>0$ suitable constants for $n\ge 1$. 
We first treat the case when $\chi$ is non-lattice. 
Using \cite[Theorem~27]{DLK17}, we have
\begin{align}
     & \EE_{x,i} \bigg[ \int_{\RR_+}   f( \chi(s), \Theta(s))  \ind{\tau_0 >s} \dd s \bigg] \notag
     \\
     \le &\, C_1 \sum_{k\in \cI}  \sum_{j\in \cI}  \int_{y\ge 0} \int_{z\in [0,x]}
     R_{i,j}^+(\dd y) R_{k,j}^-(\dd z)  f(x+y-z, k) \notag\\
     = &\,  C_1\sum_{k\in \cI}  \sum_{j\in \cI}   \int_{z\in [0,x]} \int_{u\ge x-z} 
     R_{i,j}^+(\dd u) R_{k,j}^-(\dd z) f(u, k). \label{eq:lem:perp}
\end{align}
Here $(R_{i,j}^+)_{i,j\in\cI}$ are the potential measures for the ascending ladder height process associated with $(\chi, \Theta)$,  and $(R_{i,j}^-)_{i,j\in\cI}$ are those for the descending ladder height process; see \cite[Equation (27)]{DLK17} for more details.
When $\chi$ is lattice, then each L\'evy process $\chi_i$ is a compound Poisson process on the same lattice $\{ r\mathbb{Z}\}$ for some $r>0$, and the analysis is similar to discrete random walks.
In this case, the renewal measures can be expressed via the (strong or weak) ascending ladders, analogous to the renewal measure for discrete random walks. Let $(R_{i,j}^{>})_{i,j\in\cI}$ denote the renewal measures for the strongly ascending ladder height process of $\chi$, and let $({R}^{\le}_{i,j})_{i,j\in\cI}$ denote the renewal measures for the weakly descending ladder height process. 
Specifically, we define the renewal measures as follows. Let $T_0^{>}:=0$ (resp.\ $T_0^{\le} :=0$) and define recursively for $n\ge 0$ that $T_{n+1}^{>} := \inf \{t> T_n^{>}: \chi(t) > \chi(T_n^{>})\}$ (resp.\ $T_{n+1}^{\le} := \inf \{t> T_n^{\le}: \chi(t) \le \chi(T_n^{>})\}$). Let $H_n^{>} := \chi(T_n^{>})$ (resp.\ $H_n^{\le} := \chi(T_n^{\le})$) be the strongly ascending (resp.\ weakly descending) ladder heights. Then we define, for integer $m\ge 0$ and $k \le 0$,
\begin{equation}
\begin{aligned}
        R_{i,j}^{>}(\{rm\}) &= \frac{1}{q_j}\EE_{0,i}\left[\sum_{n=0}^{\infty}\ind{H_n^{>} = rm, \Theta(T_n^{>})=j}  \right],\\
        R_{i,j}^{\le}(\{rk\}) &= \frac{1}{q_j}\EE_{0,i}\left[\sum_{n=0}^{\infty}\ind{H_n^{\le} = rk, \Theta(T_n^{\le})=j}  \right].    
\end{aligned}
\end{equation}
Then by similar arguments as in the proof of \cite[Theorem~27]{DLK17}, we deduce a lattice analogue (c.f.\ \cite[Page 209 P3]{Spitzer} for the random walk case) of \eqref{eq:lem:perp}: 
\begin{align}
    &\hspace{1.5em}\EE_{x,i} \bigg[ \int_{\RR_+}   f( \chi(s), \Theta(s))  \ind{\tau_0 >s} \dd s \bigg]\\
    & \le C_1\sum_{k\in \cI}  \sum_{j\in \cI} \sum_{n=0}^{\infty} \sum_{m=0}^{\lfloor x/r\rfloor}  
    R_{i,j}^{>}(\{nr\}) {R}_{k,j}^{\le}(\{mr\}) f(x+ (n-m)r, k)\\
    &=C_1\sum_{k\in \cI}  \sum_{j\in \cI}   \int_{z\in [0,x]} \int_{u> x-z} R_{i,j}^{>}(\dd u) {R}_{k,j}^{\le}(\dd z) f(u, k). 
\end{align}
It follows from the Markov renewal theorem (see e.g.\ \cite{Lalley84} or \cite[Section~VII.4]{Asmussen}) that 
they are non-negative and there exists $A>0$ \footnote{Note that it holds in fact for any $A>0$ for the non-lattice case; for the lattice case, take $A$ larger than the span.} such that
\begin{align}
&R_{i,j}^+([nA,(n+1) A)), R_{i,j}^-([nA,(n+1) A)), \\
&R_{i,j}^{>}([nA,(n+1) A)), {R}_{i,j}^{\le}([nA,(n+1) A))\le C_2 A, \qquad \forall n\ge 0. 
\end{align}
For simplicity, we only consider non-lattice case in the rest of the proof. The proof also works for lattice case if we replace $R_{i,j}^+$ and $R_{i,j}^-$ with $R_{i,j}^{>}$ and ${R}_{i,j}^{\le}$.

Recall that $f$ is bounded and eventually non-increasing; replacing the value of $f$ on a compact set by a constant, we can find a function $\widehat{f}\ge f$, which is bounded and non-increasing such that 
\[
 \int_{\RR_+} \widehat{f}(x,k)  \dd x\le \int_{\RR_+} f(x,k)  \dd x + C_3. 
\]
As \eqref{eq:f-int} holds, we also have $\int_{\RR_+} \widehat{f}(x,k)  \dd x<\infty$. 
Since $\widehat{f}$ is bounded and non-increasing, we deduce that 
\begin{align*}
     \sum_{n\ge 0} \int_{u\in [nA, (n+1)A)} \widehat{f}(u, k) R_{i,j}^+(\dd u)   &\le  \sum_{n\ge 0} \widehat{f}(nA, k)  R_{i,j}^+([nA, (n+1)A)) \\
     &\le \sum_{n\ge 0} \widehat{f}(nA, k)  C_2 A \\
     &\le C_4+ C_2 \sum_{n\ge 1}  \int_{u\in [(n-1)A, nA)}  \widehat{f}(u, k) \dd u,
\end{align*}
It follows that 
\[
\int_{\RR_+} 
      \widehat{f}(u, k) R_{i,j}^+(\dd u) 
      \le C_5+ C_2 \int_{u\ge 0} 
      f(u, k) \dd u  . 
\]
Plugging this inequality  to \eqref{eq:lem:perp},  we have 
\begin{align*}
     & \EE_{x,i} \bigg[ \int_{\RR_+}   f( \chi(s), \Theta(s))  \ind{\tau_0 >s} \dd s \bigg] \\
     &\le C_1 \sum_{k\in \cI}  \sum_{j\in \cI}   \Big(C_5+ C_2 \int_{\RR_+} 
      f(u, k) \dd u \Big) \int_{z\in [0,x]}  R_{k,j}^-(\dd z)\\
     & \le C_6 \sum_{k\in \cI}  \Big( \int_{u\ge0} 
      f(u, k) \dd u +1 \Big) \sum_{j\in \cI} R_{k,j}^-([0,x]),
\end{align*}
which is finite under the assumption \eqref{eq:f-int}. 
\end{proof}

\subsection{The Matrix exponent and Perron-Frobenius eigenvalue}
In this section, we prove Theorem \ref{thm:matrix-exponent} and then introduce some properties of the Perron-Frobenius eigenvalue (PF eigenvalue).
\begin{proof}[Proof of Theorem \ref{thm:matrix-exponent}]
We decompose the process at the  first time when  the initial particle branches or changes its type; when the initial particle starts at type $i$, it has exponential distribution with parameter $\beta_i + q_i$, where  $q_i := \sum_{l\neq i}q_{il} = -q_{ii}$. By the branching property, we have, for every $t\ge 0$ 
    \begin{align}
        M_{i,j}^{(\theta)}(t) &= e^{-(\beta_i+q_i)t}e^{t\phi_i(\theta)} \ind{i=j}+\frac{\beta_i}{\beta_i+q_i}\int_{0}^{t}(\beta_i+q_i)e^{-(\beta_i+q_i)s}e^{s\phi_i(\theta)}m_iM_{i,j}^{(\theta)}(t-s)\dd s\\
        &+\frac{q_i}{\beta_i+q_i}\int_{0}^{t}(\beta_i+q_i)e^{-(\beta_i+q_i)s}e^{s\phi_i(\theta)} \sum_{l\ne i}\frac{q_{il}}{q_i}G_{il}(\theta) M_{l,j}^{(\theta)}(t-s)\dd s.
    \end{align}
     Let $D^{(\theta)}(t) := \diag \left(e^{t(\phi_i(\theta)-\beta_i-q_i)}\right)_{1\le i \le \mathtt{d}}$, $C:=\diag (\beta_im_i)_{1\le i \le \mathtt{d}}$, and $E:= \diag \left(q_i\right)_{1\le i \le \mathtt{d}}$. Then by the previous equation we have
    \begin{align}
        &\hspace{1.5em} M^{(\theta)}(t) \\
        &= D^{(\theta)}(t)+\int_{0}^{t}D^{(\theta)}(s)CM^{(\theta)}(t-s)\dd s+\int_{0}^{t}D^{(\theta)}(s)(Q\circ G(\theta) +E)M^{(\theta)}(t-s)\dd s\\
        &= D^{(\theta)}(t)+\int_{0}^{t}D^{(\theta)}(t-s)(C+Q\circ G(\theta)+E)M^{(\theta)}(s)\dd s.
    \end{align}
    Let $B^{(\theta)}:= \diag \left( (\phi_i(\theta)-\beta_i-q_i)\right)_{1\le i \le \mathtt{d}}$. Then
    \begin{align}
        &\hspace{1.5em} \int_0^t B^{(\theta)} M^{(\theta)}(s) \dd s\\
        &= \int_0^t B^{(\theta)} D^{(\theta)}(s) \dd s + \int_0^t B^{(\theta)} \int_{0}^{s}D^{(\theta)}(s-r)(C+Q\circ G(\theta)+E)M^{(\theta)}(r)\dd r \dd s\\
        &= \int_0^t d D^{(\theta)}(s) + \int_0^t \int_r^t B^{(\theta)} D^{(\theta)}(s-r) \dd s \; (C+Q\circ G(\theta)+E)M^{(\theta)}(r) \dd r\\
        &= D^{(\theta)}(t) - I + \int_0^t (D^{(\theta)}(t-r) - I)(C+Q\circ G(\theta)+E)M^{(\theta)}(r) \dd r\\
        &= M^{(\theta)}(t) - I - \int_0^t (C+Q\circ G(\theta)+E)M^{(\theta)}(r) \dd r.
    \end{align}
    Therefore,
    \begin{equation}
        M^{(\theta)}(t) = I + \int_0^t (B^{(\theta)}+C+Q\circ G(\theta)+E)M^{(\theta)}(s) \dd s=I + \int_0^t \mathcal{M(\theta)}M^{(\theta)}(s) \dd s,
    \end{equation}
    where we used  the definition of $\mathcal{M}(\theta)$  given by \eqref{matric exponent}.
    We deduce from this integral equation that $M^{(\theta)}(t) = e^{t\mathcal{M}(\theta)}, \forall t\ge 0$.
\end{proof}

As mentioned in Section~\ref{section: introduction}, we introduce some preliminary results on the Perron-Frobenius (PF) theory in the following Lemmas; see for example \cite[Theorem 1.1]{seneta06} or  \cite[Theorem 8.3.4]{Horn_Johnson} for proofs.

\begin{lemma}\label{lem:property of PF}
    Let $\mathcal{M}(\theta)$ be defined 
    by \eqref{matric exponent}
    and let $\lambda(\theta)$ denote the PF eigenvalue. Then we have:
    \begin{enumerate}
    \item $\lambda(\theta)$ is real and larger than the real part of any other eigenvalues of $\mathcal{M}(\theta)$. 
    \item The eigenvector w.r.t.\ $\lambda(\theta)$ is unique up to constant multiplication. 
    Let $\vec{Y}(\theta)=(Y_1(\theta), \ldots Y_{\mathtt{d}}(\theta))^{\top}$ and $\vec{V}(\theta) = (V_1(\theta), \ldots V_{\mathtt{d}}(\theta))^{\top}$ denote the corresponding left and right eigenvectors respectively. Without loss of generality,
    we normalize the two eigenvectors with $\pi^{\top} \vec{V}(\theta) = 1$ and $\vec{Y}^{\top}(\theta)\vec{V}(\theta) = 1$, where $\pi = (\pi_1, \ldots, \pi_\mathtt{d})^{\top}$ is the stationary distribution of Markov chain $J$.
    The entries of $\vec{Y}(\theta)$ and $\vec{V}(\theta)$ are all strictly positive. 
    \item If $\vec{U}(\theta)$ is a right eigenvector of $\mathcal{M}(\theta)$  with positive  entries, then we have $\vec{U}(\theta) = c\vec{V}(\theta)$ for some $c>0$.
\end{enumerate}
\end{lemma}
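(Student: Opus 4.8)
The statement to be proved is Lemma~\ref{lem:property of PF}, which collects the standard Perron--Frobenius facts for the matrix exponent $\mathcal{M}(\theta)$.

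\bigskip

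The plan is to reduce everything to a single application of the classical Perron--Frobenius theorem for matrices with strictly positive entries, using Theorem~\ref{thm:matrix-exponent} as the bridge. The key observation is that although $\mathcal{M}(\theta)$ itself need not have positive off-diagonal entries only (it does, since $Q\circ G(\theta)$ has nonnegative off-diagonal entries and $\Theta$ is irreducible, so $\mathcal{M}(\theta)$ is an irreducible ML-matrix / essentially nonnegative matrix), the cleanest route is to work with $M^{(\theta)}(t)=e^{t\mathcal{M}(\theta)}$ for a fixed $t>0$. By Theorem~\ref{thm:matrix-exponent} this is the mean matrix $(M^{(\theta)}_{i,j}(t))$, and since $\Theta$ is irreducible (assumption \eqref{eq:H-J}) every entry is strictly positive: indeed $M^{(\theta)}_{i,j}(t)\ge \EE_{0,i}[e^{-\theta\chi(t)}\ind{\Theta(t)=j}]=(e^{F(\theta)t})_{ij}>0$ because the probability of the path of type-switches from $i$ to $j$ within time $t$ is positive and the Lévy pieces contribute a strictly positive exponential moment. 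First I would invoke \cite[Theorem~1.1]{seneta06} (or \cite[Theorem~8.3.4]{Horn_Johnson}) for the strictly positive matrix $M^{(\theta)}(t)$: it has a real eigenvalue $r(t)>0$ that strictly dominates the modulus of every other eigenvalue, it is algebraically simple, and its left and right eigenvectors can be chosen with strictly positive entries and are unique up to scalar multiples; moreover any eigenvector with all-positive (or all-nonnegative, nonzero) entries must be a positive multiple of the Perron eigenvector.

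\bigskip

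Next I would transfer these properties back to $\mathcal{M}(\theta)$ via the spectral mapping theorem: the eigenvalues of $e^{t\mathcal{M}(\theta)}$ are exactly $\{e^{t\nu}:\nu\in\mathrm{spec}(\mathcal{M}(\theta))\}$, with matching (generalized) eigenspaces. Hence the dominant eigenvalue $r(t)$ of $M^{(\theta)}(t)$ equals $e^{t\lambda(\theta)}$ where $\lambda(\theta)$ is the (necessarily real) eigenvalue of $\mathcal{M}(\theta)$ of largest real part; the fact that $r(t)$ strictly dominates the moduli of the other eigenvalues of $M^{(\theta)}(t)$ translates, via $|e^{t\nu}|=e^{t\operatorname{Re}\nu}$, into $\lambda(\theta)>\operatorname{Re}\nu$ for every other eigenvalue $\nu$ of $\mathcal{M}(\theta)$, which is claim (1). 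Algebraic simplicity of $r(t)$ for $e^{t\mathcal{M}(\theta)}$ forces algebraic simplicity of $\lambda(\theta)$ for $\mathcal{M}(\theta)$ (if $\lambda(\theta)$ had multiplicity, so would $r(t)$), and the eigenspace of $\mathcal{M}(\theta)$ for $\lambda(\theta)$ coincides with the eigenspace of $M^{(\theta)}(t)$ for $r(t)$, which is one-dimensional; this gives uniqueness up to scalar in claim (2). The positivity of $\vec Y(\theta)$ and $\vec V(\theta)$ is inherited directly from the positivity of the Perron eigenvectors of $M^{(\theta)}(t)$. The normalizations $\pi^\top\vec V(\theta)=1$ and $\vec Y(\theta)^\top\vec V(\theta)=1$ are legitimate precisely because $\pi$ has positive entries (it is the stationary distribution of the irreducible chain) and $\vec Y(\theta),\vec V(\theta)$ have positive entries, so both inner products are strictly positive and can be scaled to $1$; I would note that $\vec Y(\theta)^\top\vec V(\theta)>0$ is also automatic from simplicity (left and right Perron eigenvectors of a simple eigenvalue are never orthogonal). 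Finally, claim (3): if $\vec U(\theta)$ is a right eigenvector of $\mathcal{M}(\theta)$ with strictly positive entries, then $\mathcal{M}(\theta)\vec U(\theta)=c'\vec U(\theta)$ for some real $c'$ (real since $\vec U$ is real), so $M^{(\theta)}(t)\vec U(\theta)=e^{tc'}\vec U(\theta)$; a strictly positive matrix has only one eigenvalue with a positive eigenvector, namely its Perron root, so $e^{tc'}=r(t)$, i.e.\ $c'=\lambda(\theta)$, and $\vec U(\theta)$ lies in the one-dimensional eigenspace, hence $\vec U(\theta)=c\vec V(\theta)$; since both are positive, $c>0$.

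\bigskip

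There is essentially no serious obstacle here — the lemma is a packaging of textbook Perron--Frobenius theory — so the main thing to get right is bookkeeping: (i) carefully justifying that every entry of $M^{(\theta)}(t)$ is strictly positive using irreducibility of $Q$ together with finiteness of the relevant exponential moments for $\theta\in[0,\bar\theta)$, and (ii) correctly invoking the spectral mapping theorem so that simplicity and the strict spectral gap pass between $e^{t\mathcal{M}(\theta)}$ and $\mathcal{M}(\theta)$ rather than only between their spectra as sets. One minor subtlety worth a sentence: I should confirm that $t>0$ can be fixed arbitrarily, since the eigenvectors and the dominance property of $\mathcal{M}(\theta)$ do not depend on the choice of $t$; indeed the eigenspaces of $e^{t\mathcal{M}(\theta)}$ are independent of $t>0$ for the Perron root because $\lambda(\theta)$ is the unique eigenvalue of $\mathcal{M}(\theta)$ mapped to the dominant eigenvalue of $e^{t\mathcal M(\theta)}$. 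Given this, all three items follow, and the whole argument can be compressed to a short paragraph citing \cite[Theorem~1.1]{seneta06} and the spectral mapping theorem.
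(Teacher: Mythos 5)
Your overall route is exactly the one the paper intends: the paper gives no proof of this lemma, but states (before Lemma~\ref{lem:property of PF} and in Section~\ref{section: introduction}) that one applies the classical Perron--Frobenius theorem of \cite[Theorem~1.1]{seneta06} / \cite[Theorem~8.3.4]{Horn_Johnson} to the strictly positive matrix $M^{(\theta)}(t)=e^{t\mathcal{M}(\theta)}$ from Theorem~\ref{thm:matrix-exponent} and transfers the conclusions back to $\mathcal{M}(\theta)$; your spectral-mapping bookkeeping and your arguments for items (2) and (3) (one-dimensional eigenspace, and the ``only one eigenvalue has a positive eigenvector'' consequence, e.g.\ via pairing with the positive left eigenvector) are all fine.

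One step as written would fail, though: the entrywise inequality $M^{(\theta)}_{i,j}(t)\ge \EE_{0,i}[e^{-\theta\chi(t)}\ind{\Theta(t)=j}]=(e^{F(\theta)t})_{ij}$, which you use to justify strict positivity of $M^{(\theta)}(t)$. Since $\mathcal{M}(\theta)=F(\theta)+\diag(\beta_i(m_i-1))$ and the model allows $m_i<1$ (even $\mu_i(0)=1$, i.e.\ pure killing at some type), the branching correction can be negative and the inequality is false in general: already in the single-type case with $\beta(m-1)<0$ one has $e^{t(\phi(\theta)+\beta(m-1))}<e^{t\phi(\theta)}$. The conclusion you need is nevertheless true and the repair is one line: either restrict the expectation defining $M^{(\theta)}_{i,j}(t)$ to the event that the initial particle has not branched by time $t$, which gives $M^{(\theta)}_{i,j}(t)\ge e^{-t\max_k\beta_k}\,(e^{tF(\theta)})_{ij}>0$; or note that $\mathcal{M}(\theta)$ is an irreducible matrix with nonnegative off-diagonal entries, so for $c$ large $\mathcal{M}(\theta)+cI$ is irreducible and nonnegative and $e^{t\mathcal{M}(\theta)}=e^{-ct}e^{t(\mathcal{M}(\theta)+cI)}$ has strictly positive entries (this second observation also lets you apply Perron--Frobenius directly to $\mathcal{M}(\theta)+cI$ and avoid the spectral-mapping step altogether). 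A smaller point: ``the (necessarily real) eigenvalue of $\mathcal{M}(\theta)$ of largest real part'' is not immediate from the spectral mapping with a single fixed $t$ (a complex $\nu$ can have $e^{t\nu}>0$); reality should be deduced, as your later argument implicitly does, from the fact that the corresponding eigenvector of $\mathcal{M}(\theta)$ can be taken to be the real, strictly positive Perron vector of $M^{(\theta)}(t)$, or from simplicity combined with the conjugate eigenvalue $\bar\nu$. With these two repairs your proof is complete and matches the paper's intended argument.
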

With the same method as in the proof of Proposition 2.13 of \cite{Iva2011}, we have the following lemma on differentiability of $\lambda(\theta)$ and $V(\theta)$.
\begin{lemma}\label{lem:diff_of_V}
    With the notation of Lemma \ref{lem:property of PF}, each of the functions $\theta\mapsto \lambda(\theta)$, $\vec{Y}(\theta)$ and $\vec{V}(\theta)$ is infinitely differentiable on  $(0, \bar{\theta})$. Moreover, we have 
    \begin{equation}\label{eqn:lambda'}
        \lambda'(\theta) = \vec{Y}^{\top}(\theta)\mathcal{M}'(\theta)\vec{V}(\theta),
    \end{equation}
    where $\mathcal{M}'(\theta)$ is the entry-wise derivative of $\mathcal{M}(\theta)$.
\end{lemma}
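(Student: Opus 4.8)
The plan is to combine the smoothness of the entries of $\mathcal{M}(\theta)$ with the implicit function theorem, using crucially that the Perron--Frobenius (PF) eigenvalue is a \emph{simple} eigenvalue. First I would record that each entry of $\mathcal{M}(\theta)$ is $C^{\infty}$ on $(0,\bar{\theta})$. The diagonal entries equal $\phi_i(\theta)+\beta_i m_i-\beta_i$, and $\phi_i$ in \eqref{eq:H-phi} is a Laplace exponent, so it is smooth on the interior of the interval $\{\theta:\phi_i(\theta)<\infty\}$ (differentiate under the integral sign, justified by dominating with finite exponential moments on a neighbourhood); likewise the off-diagonal entries $q_{ij}G_{ij}(\theta)$ are smooth where $G_{ij}(\theta)=\EE[e^{-\theta U_{ij}}]$ is finite in an open neighbourhood. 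By the definition of $dom$, the interval $(0,\bar{\theta})$ sits in the interior of the common finiteness domain, so $\theta\mapsto\mathcal{M}(\theta)$ is a $C^{\infty}$ matrix-valued map there. Then I would fix $\theta_0\in(0,\bar{\theta})$ and introduce the $C^{\infty}$ map
\[
\Psi(\theta,\lambda,\vec v) := \begin{pmatrix} (\mathcal{M}(\theta)-\lambda I)\vec v \\ \pi^{\top}\vec v - 1 \end{pmatrix} \in \RR^{\mathtt{d}+1},
\]
which vanishes at $(\theta_0,\lambda(\theta_0),\vec V(\theta_0))$ by Lemma~\ref{lem:property of PF} (the normalisation $\pi^{\top}\vec V(\theta)=1$ is exactly the one fixed there).

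Next I would check that the partial differential of $\Psi$ in $(\lambda,\vec v)$ at this point, namely
\[
\begin{pmatrix} -\vec V(\theta_0) & \mathcal{M}(\theta_0)-\lambda(\theta_0) I \\ 0 & \pi^{\top} \end{pmatrix},
\]
is invertible. If $(\delta\lambda,\delta\vec v)$ lies in its kernel, then left-multiplying $-\delta\lambda\,\vec V(\theta_0)+(\mathcal{M}(\theta_0)-\lambda(\theta_0) I)\delta\vec v=0$ by $\vec Y^{\top}(\theta_0)$ and using $\vec Y^{\top}(\theta_0)\mathcal{M}(\theta_0)=\lambda(\theta_0)\vec Y^{\top}(\theta_0)$ together with $\vec Y^{\top}(\theta_0)\vec V(\theta_0)=1$ gives $\delta\lambda=0$; then $\delta\vec v\in\ker(\mathcal{M}(\theta_0)-\lambda(\theta_0) I)=\RR\,\vec V(\theta_0)$ because $\lambda(\theta_0)$ is simple (Lemma~\ref{lem:property of PF}(2)), and $\pi^{\top}\delta\vec v=0$ with $\pi^{\top}\vec V(\theta_0)=1$ forces $\delta\vec v=0$. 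The implicit function theorem then yields $C^{\infty}$ maps $\theta\mapsto(\widetilde\lambda(\theta),\widetilde{\vec V}(\theta))$ near $\theta_0$ solving $\Psi=0$ and agreeing with $(\lambda,\vec V)$ at $\theta_0$. Since being the PF eigenvalue is the condition of being real and having strictly larger real part than all other eigenvalues — an open, continuity-stable condition — the branch $\widetilde\lambda$ remains the PF eigenvalue near $\theta_0$, so $\widetilde\lambda=\lambda$ and $\widetilde{\vec V}=\vec V$ there; as $\theta_0$ is arbitrary, $\lambda$ and $\vec V$ are $C^{\infty}$ on $(0,\bar{\theta})$. Running the same argument for $\mathcal{M}(\theta)^{\top}$ (same PF eigenvalue $\lambda(\theta)$) gives smoothness of $\vec Y(\theta)$, after imposing the normalisation $\vec Y^{\top}(\theta)\vec V(\theta)=1$, which is smooth in the already-smooth data.

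Finally, for \eqref{eqn:lambda'} I would differentiate $\mathcal{M}(\theta)\vec V(\theta)=\lambda(\theta)\vec V(\theta)$, obtaining $\mathcal{M}'(\theta)\vec V(\theta)+\mathcal{M}(\theta)\vec V'(\theta)=\lambda'(\theta)\vec V(\theta)+\lambda(\theta)\vec V'(\theta)$, and then left-multiply by $\vec Y^{\top}(\theta)$: the terms $\vec Y^{\top}(\theta)\mathcal{M}(\theta)\vec V'(\theta)$ and $\lambda(\theta)\vec Y^{\top}(\theta)\vec V'(\theta)$ cancel since $\vec Y^{\top}(\theta)\mathcal{M}(\theta)=\lambda(\theta)\vec Y^{\top}(\theta)$, and $\vec Y^{\top}(\theta)\vec V(\theta)=1$ leaves $\lambda'(\theta)=\vec Y^{\top}(\theta)\mathcal{M}'(\theta)\vec V(\theta)$. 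The only delicate step in the whole argument is the invertibility of the Jacobian of $\Psi$, which rests entirely on the simplicity of the PF eigenvalue provided by Lemma~\ref{lem:property of PF}; everything else is routine. An alternative, as the statement indicates, is to invoke smooth/analytic perturbation theory for simple eigenvalues directly, mirroring the proof of \cite[Proposition 2.13]{Iva2011}.
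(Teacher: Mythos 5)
Your proof is correct and is essentially the paper's argument: the paper simply invokes the method of Proposition 2.13 of Ivanovs' thesis, i.e.\ smooth perturbation of a simple eigenvalue via the implicit function theorem, together with the standard computation $\lambda'=\vec Y^{\top}\mathcal{M}'\vec V$ obtained by differentiating the eigen-equation. The one step worth tightening is the identification of the IFT branch with the PF pair: rather than appealing to stability of the ``largest real part'' condition, it is cleanest to note that $\widetilde{\vec V}(\theta)$ has strictly positive entries for $\theta$ near $\theta_0$ by continuity, so Lemma \ref{lem:property of PF}(3) together with the normalisation $\pi^{\top}\widetilde{\vec V}(\theta)=1$ forces $\widetilde{\vec V}=\vec V$ and hence $\widetilde{\lambda}=\lambda$ there.
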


In the rest of the paper, we refer to $\lambda(\theta)$ as the PF eigenvalue of $\mathcal{M}(\theta)$ and $V(\theta)$ as the PF eigenvector, with the same normalization in Lemma \ref{lem:property of PF}.

\subsection{Measure change by the additive martingale}\label{subsection_spine_decom}

    Let us first give a formal construction of our branching MAP model as a marked Galton-Watson tree. Let $\mathbb{N}=\{1,2,3, \ldots\}$. 
	Denote the Ulam-Harris labels by
	\begin{equation}
	\mathbb{U}=\{\varnothing\} \cup \bigcup_{n \in \mathbb{N}}\mathbb{N}^{n}. 
	\end{equation}
	A planar tree $\tau$ is a subset  of $\mathbb{U}$ such that
	\begin{itemize}
		\item $\varnothing \in \tau$ (the ancestor);
		\item for $u, v \in \mathbb{U}$, $uv \in \tau$ implies $u \in \tau$;
		\item for all $u \in \tau,$ there exists $A_u \in\{1,2, \ldots\}$
		such that for $j \in \mathbb{N}$,  $uj \in \tau$ if and only if $1 \leq j \leq A_u$.
	\end{itemize}
    Let $\mathbb{T}$ be the space of planar trees. 
    We denote $v\preceq u$ if $v$ is an ancestor of $u$. Write $v\prec u$ if $v\preceq u$ and $v\ne u$. 

    For a branching MAP, we denote the initial particle by $\varnothing\in \UU$ and its birth time by $b_{\varnothing}= 0$. Say it is initially located at $x\in \RR$ with type $i\in \cI$. Then we define $(X_{\varnothing}, J_{\varnothing})$ to be a MAP with triplet $((\phi_i)_{i\in \cI}, Q,G)$ starting from $(x,i)$. Let us recursively construct a tree $\tau\in\mathbb{T}$ and assigned each particle $u\in \tau$ with a mark $(X_u,J_u,\eta_u,A_u)$ in the following way. 
    \begin{itemize}
    \item For each particle $u\in \tau$, given the birth time $b_u$ and $(X_u(t),J_u(t))_{t\ge b_u}$, its lifetime $\eta_u$ is specified by the first jump time of a non-homogeneous Poisson process with rate $\beta_{J_u(s -b_u)}, s\ge 0$. 
    At the death time $d_u:= b_u + \eta_u$, the particle $u$ gives birth to an offspring, with the number of children $A_u$ distributed according to the offspring law $\mu_{J_u(d_u)}$, all located at $X_u(d_u)$ with type $J_u(d_u)$. 
    \item For each child particle $v= uj$ with $j=1, \ldots A_u$, its birth time is $b_v:= d_u$. We construct a process $(X_v(t), J_v(t))_{t\ge b_v}$ such that $(X_v(s+b_v), J_v(s+b_v))_{s\ge 0}$ is a MAP with triplet $((\phi_i)_{i\in \cI}, Q,G)$ starting from $(X_v(b_v), J_v(b_v)) = (X_u(d_u), J_u(d_u))$, independent of the others. 
    \end{itemize}  
	We write $(\tau, M)$ as a shorthand for the marked Galton-Watson tree $\{(u,X_u,J_u,\eta_u,A_u):u\in\tau \}$. The state-space is $\mathcal{T} = \{(\tau,M):\tau\in\mathbb{T} \}$ and we denote its law by $\mathbf{P}_{x,i}$. For $s\leq t$ and $u\in \cN_t$, we still use $(X_u(s),J_u(s))$ to denote the position and type of particle $u$ or its ancestor at time $s$. For $t\ge 0$, define $\mathcal{F}_{t}$ to be the $\sigma$-algebra generated by 
	\begin{align*}
		\left\{\begin{array}{l}
			(u, J_u, A_u, \eta_{u},\{X_{u}(s): s \in[b_u, b_u+\eta_u]\}: u \in \tau \text { with } d_{u} \leq t) \text { and } \\
			(u, J_u, \{X_{u}(s): s \in[b_u, t]\}: u \in \tau \text { with } t \in[b_{u}, d_{u})): \tau \in \mathbb{T}
		\end{array}\right\}.
	\end{align*}
	Set  $\cF = \cup_{t\geq 0} \cF_t$.

    Assume \ref{A1} holds and let $\theta \in (0, \bar{\theta})$.  For $t\ge 0$, recall that $\cN_t$ denote the particles alive at time $t$. Then $u\in \cN_t$ if $b_u \le t <d_u$ and $(X_u(t), J_u(t))$ gives its position and type at time $t$. Recall that the additive martingale is 
    \begin{equation}
	    W_{\theta} (t):=  	\sum_{u\in \cN_t} e^{-\left(\theta X_u(t)+\lambda(\theta) t\right)} V_{\JJ_u(t)}(\theta), \qquad t\ge 0.  
    \end{equation}
    It follows from Theorem~\ref{thm:matrix-exponent} that $W_{\theta}(t)$ has constant expectation. The Markov property then implies that it is a non-negative martingale under $\mathbf{P}_{x,i}$ for all $x\in \mathbb{R}$ and $i\in\mathcal{I}$.

    For any $x\in \RR$, $i\in\cI$, define a new probability measure $\bP^{\theta}_{x,i}$ by
     \begin{equation}\label{eq:measure-change-P}
        \frac{\dd \bP^{\theta}_{x,i}}{\dd \bP_{x,i}}\bigg{|}_{\cF_t} := \frac{W_{\theta}(t)}{W_{\theta}(0)}.
    \end{equation}
    We next study the process under the new measure $\bP^{\theta}_{x,i}$ by  the spine approach. For simplicity, we first introduce the definition of the spine under the assumption that each particle has at least one child. However, all the results in this subsection also hold for the case allowing the possibility of no offspring when a particle dies. For this general case, we give the details at the end of Section \ref{section:subsec_proof_spine}.
    Specifically, a spine is a distinguished genealogical line of descendants from the ancestor. We write the spine as $\xi=\left\{\xi_t: t\geq 0\right\}$, where $\xi_t \in \tau$ is the label of the distinguished particle at time $t$. We write $u \in \xi$ if $u=\xi_t$ for some $t\geq0$. Let $O_u$ be the set of $u$'s children except the one in the spine. Now let
	\begin{equation*}
		\widetilde{\mathcal{T}}:=\{(\tau, M, \xi): \xi \subseteq \tau \in \mathbb{T}\}
	\end{equation*}
	be the space of marked trees in $\mathcal{T}$ with a distinguished spine $\xi$. 
    Recall that the filtration $(\mathcal{F}_{t})_{t\ge 0}$ contains all the information about the marked tree. Then we define for every $t\ge 0$ a new sigma-algebra by adding the information
	of the spine: 
	\begin{equation*}
    \widetilde{\mathcal{F}}_{t}:=\sigma\left(\mathcal{F}_{t},\left\{\xi_s: 0\leq s\leq t \right\}\right). 
	\end{equation*}
	Let $\widetilde{\mathcal{F}} := \cup_{t\geq 0} \widetilde{\mathcal{F}}_{t}$. 
    
    Let $J_{\xi} := (J_{\xi_t}(t), t\geq 0)$ denote the type process of the spine and $X_{\xi} := (X_{\xi_t}(t), t\geq 0)$ its spatial movement. We also use $X_{\xi}(t)$ and $J_{\xi}(t)$ as shorthand for $X_{\xi_t}(t)$ and $J_{\xi_t}(t)$, respectively.
    For $u\in\tau$, we use $|u|$ to denote the generation of $u$. Define $n_t: = |\xi_t|$, which tells us which generation the spine node is in, then $\mathbf{n}:=(n_t, t\geq 0)$ is the counting process of fission times along the spine. Define
    \begin{equation}\label{eqn:Sigma_fields_G}
        \begin{aligned}
        &\widetilde{\cG}_t := \sigma(J_{\xi}(s), X_{\xi}(s): 0\leq s\leq t),
        \quad \widehat{\cG}_t := \sigma(\widetilde{\cG}_t, \left\{\xi_s: 0\leq s\leq t \right\}, \{\eta_u, u\prec\xi_t\}),\\
        &\cG_t := \sigma(\widetilde{\cG}_t, \left\{\xi_s: 0\leq s\leq t \right\}, \{\eta_u, A_u, u\prec\xi_t\}).
    \end{aligned}
    \end{equation}
    
    We extend $\bP_{x,i}$ on $(\mathcal{T},\cF)$ to a probability measure $\widetilde{\bP}_{x,i}$ on $(\widetilde{\mathcal{T}},\widetilde{\cF})$ so that the spine is a single genealogical line of descendants chosen from the underlying tree. Since the children of a particle with type $j$ are all of type $j$, we assume that at each fission time along the spine we make a uniform choice among the offspring. Then for $u\in \tau$, we have
    \begin{equation}\label{eq:spine-unif}
        \widetilde{\bP}_{x,i}(\xi_t = u \mid \cF_t) = \ind{u\in \cN_t} \prod_{v\prec u} \frac{1}{A_v}, \qquad t\ge 0. 
    \end{equation}

    \begin{lemma}\label{lemma_martingale_zeta}
    For $t\ge 0$, define
    \begin{equation}
            \zeta_t := \sum_{u\in\cN_t} \left(\prod_{v\prec u} A_v\right) e^{-\theta X_u(t) - \lambda(\theta) t} V_{J_u(t)}(\theta) \ind{\xi_t = u}. 
    \end{equation}
        Then the process $(\zeta_t, t\geq 0)$ is a $\widetilde{\bP}_{x,i}$-martingale with respect to $\{\widetilde{\cF}_t, t\geq 0\}$. 
    \end{lemma}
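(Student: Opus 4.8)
The plan is to verify the martingale property of $(\zeta_t)_{t\ge 0}$ directly, by splitting the computation into two parts that exploit the tower property over the $\sigma$-algebras introduced in \eqref{eqn:Sigma_fields_G}. First I would rewrite $\zeta_t$ in a form that separates the spine dynamics from the branching along the spine. By the uniform spine choice \eqref{eq:spine-unif}, for a fixed $u\in\cN_t$ the event $\{\xi_t=u\}$ has $\widetilde\bP_{x,i}$-conditional probability $\prod_{v\prec u}A_v^{-1}$ given $\cF_t$, which exactly cancels the factor $\prod_{v\prec u}A_v$ in the definition of $\zeta_t$. Hence
\[
\widetilde\bE_{x,i}[\zeta_t\mid \cF_t] = \sum_{u\in\cN_t} e^{-\theta X_u(t)-\lambda(\theta)t} V_{J_u(t)}(\theta) = W_\theta(t),
\]
so $\zeta_t$ is a ``spine-lift'' of the additive martingale; this observation both fixes the normalization and suggests that the martingale property of $\zeta$ should follow from that of $W_\theta$ together with a consistency check at fission times.

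Next I would compute $\widetilde\bE_{x,i}[\zeta_{t+s}\mid\widetilde\cF_t]$ by decomposing the population at time $t+s$ according to which time-$t$ particle is the relevant ancestor, and in particular isolating the spine particle $\xi_t=u$ at time $t$. Given $\widetilde\cF_t$, the subtree rooted at the spine particle evolves as a fresh branching MAP started from $(X_\xi(t),J_\xi(t))$ with an independently re-chosen spine, and the factor $\prod_{v\prec u}A_v$ is $\widetilde\cF_t$-measurable and pulls out. Using the branching property and Theorem \ref{thm:matrix-exponent} applied to the spine's descendant subtree — more precisely, conditioning further on $\cG_{t+s}$ and using that $\widetilde\bE[\,\cdot\mid\cF_{t+s}\vee\text{(spine)}]$ reduces to the uniform choice again — the contribution of the spine subtree at time $t+s$ has conditional expectation
\[
\Big(\prod_{v\prec u}A_v\Big)\,\widetilde\bE_{x,i}\!\left[\sum_{w\in\cN_{t+s},\,w\succeq \xi_{t+s}\text{-line from }u} e^{-\theta X_w(t+s)-\lambda(\theta)(t+s)}V_{J_w(t+s)}(\theta)\,\ind{\xi_{t+s}=w}\,\Big|\,\widetilde\cF_t\right],
\]
and the inner expectation, after averaging over the spine choice within the subtree, equals $\bE_{X_\xi(t),J_\xi(t)}\big[W_\theta(s)\big]e^{-\lambda(\theta)t}$ up to the correct prefactors; by the martingale property of $W_\theta$ this is $e^{-\theta X_\xi(t)-\lambda(\theta)t}V_{J_\xi(t)}(\theta)$, which is precisely the time-$t$ summand for $u=\xi_t$. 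Summing over $u$ (only $u=\xi_t$ contributes because of the indicator) gives $\widetilde\bE_{x,i}[\zeta_{t+s}\mid\widetilde\cF_t]=\zeta_t$. I would also check integrability, $\widetilde\bE_{x,i}[\zeta_t]=\widetilde\bE_{x,i}[W_\theta(t)]=W_\theta(0)<\infty$, which is immediate from the display above.

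The main obstacle I anticipate is the careful bookkeeping of the nested conditional expectations: one must be precise about how the re-randomization of the spine interacts with the factor $\prod_{v\prec u}A_v$ at each fission, so that the combinatorial weights telescope correctly rather than producing spurious factors of $A_u$. The cleanest route is probably to first establish the identity $\widetilde\bE_{x,i}[\zeta_t\mid\cF_t]=W_\theta(t)$ (the easy direction, from \eqref{eq:spine-unif}), and then to deduce the $\widetilde\cF_t$-martingale property from the $\cF_t$-martingale property of $W_\theta$ by verifying the ``Doob $h$-transform compatibility'': the conditional law of the spine given $\widetilde\cF_t$ is the $W_\theta$-transform of the uniform spine, so a routine application of the branching Markov property at time $t$ closes the argument. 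I do not expect to need anything beyond Theorem \ref{thm:matrix-exponent} and the construction of $\widetilde\bP_{x,i}$.
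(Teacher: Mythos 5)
Your argument is essentially correct, but it follows a genuinely different route from the paper. You prove the lemma directly: project onto $\cF_t$ via the uniform spine choice \eqref{eq:spine-unif} to get $\widetilde\bE_{x,i}[\zeta_t\mid\cF_t]=W_\theta(t)$, then condition on $\widetilde\cF_t$, observe that only descendants of $\xi_t$ contribute to $\zeta_{t+s}$, pull out the $\widetilde\cF_t$-measurable factor $\prod_{v\prec\xi_t}A_v$, and reduce the remaining conditional expectation to $\bE_{X_\xi(t),J_\xi(t)}[W_\theta(s)]=e^{-\theta X_\xi(t)}V_{J_\xi(t)}(\theta)$, which is constant by Theorem~\ref{thm:matrix-exponent}. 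The paper instead factorizes $\zeta_t=\zeta_t^{(1)}\zeta_t^{(2)}\zeta_t^{(3)}$ into a Cox-process rate-change martingale, the MAP exponential martingale $\Xi_\theta$ along the spine, and a size-biasing martingale for the offspring numbers, each a martingale conditionally on the nested $\sigma$-fields of \eqref{eqn:Sigma_fields_G}, and then combines them with a lemma of Liu et al.\ \cite{Liu11} on products of conditional martingales (using \cite{Hardy09} for the first and third factors). Your route is shorter and more elementary for this lemma, but the paper's factorization is not gratuitous: it is exactly what makes the change of measure by $\zeta$ split into the three ingredients (tilted spine motion via Lemma~\ref{lemma_spine_MAP}, accelerated fission rate, size-biased offspring) in the proof of Theorem~\ref{thm_spine_decomposition}, so your proof would not replace that later work. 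Two points to tighten: (i) your key step silently uses the branching Markov property of the spine construction under $\widetilde\bP_{x,i}$ at time $t$ (that, given $\widetilde\cF_t$, the subtree rooted at $\xi_t$ together with the continued spine is a fresh copy under $\widetilde\bP_{X_\xi(t),J_\xi(t)}$, which uses that branching rates are type- but not age-dependent); this is intuitively clear but should be justified from the formal construction \eqref{def_tildeP}. (ii) Your closing remark that ``the conditional law of the spine given $\widetilde\cF_t$ is the $W_\theta$-transform of the uniform spine'' is misstated for $\widetilde\bP_{x,i}$ — under $\widetilde\bP_{x,i}$ the spine is simply uniform given $\cF_t$, and the $W_\theta$-weighted selection \eqref{equation_choose_spine} only appears under $\widetilde\bP^{\theta}_{x,i}$; fortunately your main computation does not use this phrase. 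You should also note that in the general setting where $\mu_j(0)>0$ the spine may die, in which case $\zeta$ is absorbed at $0$ and the same computation goes through, as the paper handles separately in Section~\ref{section:subsec_proof_spine}.
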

    The proof of Lemma~\ref{lemma_martingale_zeta} is postponed to Section~\ref{section:proof_spine}. 
    Now define a probability measure $\widetilde{\bP}^{\theta}_{x,i}$ on $(\widetilde{\mathcal{T}}, \widetilde{\cF})$ by 
     \begin{equation}\label{def_tildeQ}
        \frac{\dd \widetilde{\bP}^\theta_{x,i}}{\dd \widetilde{\bP}_{x,i}} \bigg{|}_{\widetilde{\cF}_t} := \frac{\zeta_t}{\zeta_0}, \qquad t\ge 0. 
    \end{equation}
    By \eqref{eq:spine-unif}, we deduce that  
    \begin{equation}
    \widetilde{\mathbf{E}}_{x,i}[\zeta_t \ind{\xi_t = u} \mid \cF_t] 
    =   e^{-\theta X_u(t) - \lambda(\theta) t} V_{J_u(t)}(\theta) \ind{u\in \cN_t}. 
    \end{equation}
    It follows that  
    $\widetilde{\mathbf{E}}_{x,i}[\zeta_t \mid \cF_t] 
    = \sum_{u\in\cN_t}  e^{-\theta X_u(t) - \lambda(\theta) t} V_{J_u(t)}(\theta) = W_{\theta}(t)$ and thus the projection of $\widetilde{\bP}^{\theta}_{x,i}$ on $\cF$ is $\bP^{\theta}_{x,i}$. 
    Consequently, we also deduce that
        \begin{equation}\label{equation_choose_spine}
            \widetilde{\bP}^{\theta}_{x,i}(\xi_t = u \mid \cF_t)  
            = \frac{e^{-\theta X_u(t)- \lambda(\theta) t} V_{J_u(t)}(\theta)\ind{u\in \cN_t}}{ W_{\theta}(t)}.
        \end{equation}
    Indeed, for any $B\in \cF_t$, we have by \eqref{def_tildeQ} and \eqref{eq:spine-unif} that
        \begin{align}
            \widetilde{\bP}^{\theta}_{x,i}[ \xi_t=u; B] &= \frac{1}{e^{-\theta x}V_{i}(\theta)}\widetilde{\bE}_{x,i} \left[ \prod_{v\prec u} A_v e^{-\theta X_u(t) - \lambda(\theta) t} V_{J_u(t)}(\theta) \ind{u\in\cN_t} \mathbb{1}_{B}\right]\\
            &= \frac{1}{e^{-\theta x}V_{i}(\theta)}\bE_{x,i} \left[ e^{-\theta X_u(t) - \lambda(\theta) t} V_{J_u(t)}(\theta) \ind{u\in\cN_t} \mathbb{1}_{B}\right].
        \end{align}
        Since the projection of $\widetilde{\bP}^{\theta}_{x,i}$ on $\cF_t$ is $\bP^{\theta}_{x,i}$, we also have by \eqref{eq:measure-change-P} that
        \begin{equation}
        \widetilde{\bE}^{\theta}_{x,i} \left[\frac{e^{-\theta X_u(t)- \lambda(\theta) t} V_{J_u(t)}(\theta)\ind{u\in\cN_t} \mathbb{1}_{B}}{ W_{\theta}(t)}\right]
        =
        \frac{\bE_{x,i}\left[e^{-\theta X_u(t) - \lambda(\theta) t} V_{J_u(t)}(\theta) \ind{u\in\cN_t} \mathbb{1}_{B}\right]}{e^{-\theta x}V_{i}(\theta)}.
        \end{equation}
        This completes the proof of \eqref{equation_choose_spine}.

    To describe the particle system under the law $\widetilde{\bP}^\theta_{x,i}$, we introduce the following change of measure for MAPs, which is a variation of \cite[Proposition 5.6]{PalRol2002}; its proof is postponed to Section \ref{section:proof_spine}.
    \begin{lemma}\label{lemma_spine_MAP}
    For any $x\in \RR$ and $i\in \cI$, 
    let $(\chi,\Theta)$ be a MAP with triplet $((\phi_i)_{i\in\cI}, Q, G)$ under law $\PP_{x,i}$. For $\theta \in (0,\bar{\theta})$, define 
    \begin{equation}\label{def_Xi_theta}
        \Xi_{\theta}(t) := e^{-\theta \chi_t - \lambda(\theta)t + \int_0^t(\beta(m-1))(\Theta_s)\dd s} V_{\Theta_t}(\theta), \qquad t\ge 0.
    \end{equation}
    It is a $\PP_{x,i}$-martingale. Define a new probability measure $\PP_{x,i}^{\theta}$ by
    \begin{equation}\label{def_P^theta}
        \frac{\dd \PP_{x,i}^{\theta}}{\dd \PP_{x,i}} \bigg{|}_{\mathcal{F}_t^{(\chi,\Theta)}} := \frac{\Xi_{\theta}(t)}{\Xi_{\theta}(0)}, \qquad t\ge 0, 
    \end{equation}
    where $(\mathcal{F}_t^{(\chi,\Theta)}, t\geq 0)$ is the natural filtration of the MAP $(\chi, \Theta)$. Then under $\PP_{x,i}^{\theta}$, $((\chi, \Theta)_{t\geq 0})$ is a MAP with the following characteristics: for $k,j \in \cI$, 
    \begin{equation}\label{MAP_new_characher}
        \begin{aligned}
        &\widetilde{q}_{kj} = \frac{q_{kj} V_j(\theta) G_{kj}(\theta)}{V_k(\theta)}, \; \quad \mathbb{P} (\widetilde{U}_{kj}\in \dd x) = \frac{e^{-\theta x}}{G_{kj}(\theta)} \mathbb{P} (U_{kj}\in \dd x), \quad \forall j\neq k, \quad \widetilde{q}_{kk} = -\sum_{j\neq k}\widetilde{q}_{kj}, \\
        &\widetilde{\sigma}_k = \sigma_k, \; \widetilde{\ta}_k = \ta_k - \theta \sigma_k^2 - \int x \ind{[0,1]}(|x|) (1-e^{-\theta x}) \Lambda_k(\dd x), \; \widetilde{\Lambda}_k(\dd x) = e^{-\theta x} \Lambda_k(\dd x).
    \end{aligned}
    \end{equation}

    Therefore the corresponding MAP triplet is given by  
    \begin{equation}
        \left((\widetilde{\phi}_k)_{k\in\cI}, \widetilde{Q}:=(\widetilde{q}_{kj})_{k,j\in\cI}, \widetilde{G}:=(\widetilde{G}_{kj})_{k,j\in\cI}\right),
    \end{equation}
    where $\widetilde{\phi}_k(\alpha)=\phi_k(\alpha+\theta)-\phi_k(\theta)$  for $k\in \mathcal{I}$, and $\widetilde{G}_{kj}(\alpha)=\frac{G_{kj}(\alpha+\theta)}{G_{kj}(\theta)}$ for $ k,j\in \mathcal{I}$. 
    \end{lemma}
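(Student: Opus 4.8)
The plan is to establish Lemma~\ref{lemma_spine_MAP} by a standard exponential-change-of-measure argument for MAPs, treating the continuous (Lévy) part and the discrete (type-switching) part separately, and then verifying the resulting characteristics. The first task is to check that $\Xi_\theta$ is a $\PP_{x,i}$-martingale. By the representation of MAPs in Proposition~\ref{prop:rep of MAP}, between successive jumps of $\Theta$ the process $\chi$ runs as a copy of $\chi_k$ (when $\Theta=k$), for which $e^{-\theta\chi_k(t)-\phi_k(\theta)t}$ is a classical exponential Lévy martingale; at a jump from $k$ to $j$ there is an extra factor $e^{-\theta U_{kj}}V_j(\theta)/V_k(\theta)$. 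Using the characteristic matrix $F(\theta)=\diag(\phi_i(\theta))+Q\circ G(\theta)$ from \eqref{eqn:MAP matrix exponent} together with the fact that $\vec V(\theta)$ is a right eigenvector of $\mathcal M(\theta)=F(\theta)+\diag(\beta_i m_i-\beta_i)$ with eigenvalue $\lambda(\theta)$ (Lemma~\ref{lem:property of PF}), one computes that the generator of $(\chi,\Theta)$ applied to $(x,k)\mapsto e^{-\theta x}V_k(\theta)$ yields $(\mathcal M(\theta)\vec V(\theta))_k e^{-\theta x}=\lambda(\theta)e^{-\theta x}V_k(\theta)$, so that after dividing by $e^{-\lambda(\theta)t+\int_0^t(\beta(m-1))(\Theta_s)\,\dd s}$ the product becomes a local martingale; a uniform-integrability/boundedness argument on $[0,t]$ (the eigenvector entries are bounded and bounded away from $0$, and the type space is finite) upgrades this to a true martingale. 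I would most cleanly phrase this via \cite[Proposition 5.6]{PalRol2002} or by directly citing the MAP Girsanov theorem, then adapting.

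Next I would identify the law of $(\chi,\Theta)$ under $\PP^\theta_{x,i}$. The cleanest route is to compute, for a bounded functional, $\EE^\theta_{x,i}[\,\cdot\,]=\EE_{x,i}[\Xi_\theta(t)\,\cdot\,/\Xi_\theta(0)]$ and decompose $\Xi_\theta(t)$ along the skeleton $0=T_0<T_1<\dots$ of $\Theta$. On each interval $[T_n,T_{n+1})$ with $\Theta=k$, the factor $e^{-\theta(\chi_t-\chi_{T_n})-\phi_k(\theta)(t-T_n)}$ is exactly the Esscher-transform density turning $\chi_k$ into the Lévy process with Laplace exponent $\widetilde\phi_k(\alpha)=\phi_k(\alpha+\theta)-\phi_k(\theta)$; the classical computation of the Esscher transform of a Lévy triplet gives $\widetilde\sigma_k=\sigma_k$, $\widetilde\Lambda_k(\dd x)=e^{-\theta x}\Lambda_k(\dd x)$, and the stated drift correction $\widetilde\ta_k=\ta_k-\theta\sigma_k^2-\int x\ind{|x|\le1}(1-e^{-\theta x})\Lambda_k(\dd x)$ (the last term coming from re-centring the truncated jumps under the new Lévy measure). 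For the type-switching part, the factor collected at the $n$th jump (say $k\to j$) is $e^{-\theta U_{kj}}V_j(\theta)/V_k(\theta)$, and $e^{-\theta U_{kj}}$ has $\PP_{x,i}$-expectation $G_{kj}(\theta)$; reweighting the Poisson clock of rate $q_k$ and the choice of $j$ and the jump law of $U_{kj}$ accordingly — a standard thinning/tilting of a marked Poisson process — produces new switching rates $\widetilde q_{kj}=q_{kj}V_j(\theta)G_{kj}(\theta)/V_k(\theta)$, new transitional jump law $\PP(\widetilde U_{kj}\in\dd x)=e^{-\theta x}G_{kj}(\theta)^{-1}\PP(U_{kj}\in\dd x)$, hence $\widetilde G_{kj}(\alpha)=G_{kj}(\alpha+\theta)/G_{kj}(\theta)$, and $\widetilde q_{kk}=-\sum_{j\neq k}\widetilde q_{kj}$. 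One should check that the Markov additive structure is preserved under the tilt, which follows because the tilting density factorises over the skeleton intervals and depends on the past only through the current type.

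I expect the main obstacle to be the bookkeeping around the jump structure rather than any conceptual difficulty: one must carefully separate the three contributions to $\Xi_\theta$ — the Lévy exponential martingale on each excursion, the eigenvector ratios $V_j(\theta)/V_k(\theta)$ telescoping along the type path, and the deterministic compensator $e^{-\lambda(\theta)t+\int_0^t(\beta(m-1))(\Theta_s)\,\dd s}$ — and confirm that these combine so that the compensator and the $\beta(m-1)$ term cancel exactly against $\mathcal M(\theta)\vec V(\theta)=\lambda(\theta)\vec V(\theta)$, leaving precisely $F(\theta)\vec V(\theta)$ (rather than $\mathcal M(\theta)\vec V(\theta)$) as the relevant eigen-relation at the level of the MAP alone. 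The other point requiring care is the truncation term $x\ind{|x|\le1}$ in the Lévy–Khintchine formula \eqref{eq:H-phi}: the Esscher transform changes the Lévy measure, so the compensator of the small jumps must be recomputed, which is the source of the $\int x\ind{[0,1]}(|x|)(1-e^{-\theta x})\Lambda_k(\dd x)$ correction in $\widetilde\ta_k$; I would verify this by matching Laplace exponents, i.e.\ checking that the triplet $(\widetilde\sigma_k,\widetilde\ta_k,\widetilde\Lambda_k)$ indeed has Laplace exponent $\phi_k(\alpha+\theta)-\phi_k(\theta)$ via \eqref{eq:H-phi}. Once these cancellations and the Esscher computation are in place, the identification of $\left((\widetilde\phi_i)_{i\in\cI},\widetilde Q,\widetilde G\right)$ is immediate.
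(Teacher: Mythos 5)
Your proposal reaches the conclusion by a genuinely different route from the paper. The paper's proof follows the extended-generator calculus of Palmowski--Rolski: it splits $\chi$ into the transitional-jump part $\chi^{(1)}$ and the modulated L\'evy part $\chi^{(2)}$, verifies $\mathbf{A}\widetilde h=(\lambda(\theta)-\beta_i(m_i-1))\widetilde h$ for $\widetilde h(x,y,i)=e^{-\theta(x+y)}V_i(\theta)$, and reads off the new characteristics from the transformed generator $\widetilde{\mathbf A}F=\widetilde h^{-1}[\mathbf A(F\widetilde h)-F\mathbf A\widetilde h]$, citing \cite[Lemmas 3.1, 4.1 and Theorem 4.2]{PalRol2002}. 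You instead argue pathwise: conditioning on the skeleton of $\Theta$, you identify the Esscher density of each L\'evy excursion (your planned check by matching Laplace exponents does give exactly $\widetilde\sigma_k$, $\widetilde\Lambda_k$ and the stated drift correction $\widetilde\ta_k$) together with an exponential tilting of the marked point process of type switches, the eigen-relation $\mathcal M(\theta)\vec{V}(\theta)=\lambda(\theta)\vec{V}(\theta)$ making the holding-time and compensator bookkeeping consistent; note in particular that $\widetilde q_k=\sum_{j\ne k}q_{kj}G_{kj}(\theta)V_j(\theta)/V_k(\theta)=q_k+\lambda(\theta)-\phi_k(\theta)-\beta_k(m_k-1)\ge 0$, so $\widetilde Q$ is a genuine conservative rate matrix. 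Your route is more self-contained and makes the preservation of the Markov additive structure transparent (the density factorises over skeleton intervals and depends on the past only through the current type), whereas the paper outsources the measure-change machinery to \cite{PalRol2002} and never manipulates the skeleton explicitly.

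One step of your sketch needs repair: the upgrade of the exponential local martingale to a true martingale. Boundedness of the eigenvector entries does not suffice, since $e^{-\theta\chi_t}$ is unbounded; what is needed is a computation showing $\EE_{x,i}[\Xi_\theta(t)]=\Xi_\theta(0)$. The paper obtains this from the many-to-one identity $\EE_{x,i}[\Xi_\theta(t)]=\bE_{x,i}[W_\theta(t)]=e^{-\theta x}V_i(\theta)$ (equation \eqref{eq:W=Xi}), and then uses that a nonnegative local martingale is a supermartingale, so constant expectation forces the martingale property. Alternatively, your own skeleton decomposition supplies the missing argument without any appeal to the branching system: given the skeleton, the L\'evy and transitional factors have deterministic conditional expectations $e^{\phi_k(\theta)\Delta t}$ and $G_{kj}(\theta)$ (finite because $\theta<\bar\theta$), and the remaining expectation over the skeleton equals one precisely because the tilted skeleton densities are those of the non-explosive finite-state chain with rate matrix $\widetilde Q$. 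Spelling this out (or simply quoting the constant-expectation argument) would make the proposal complete.
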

   
    We now give the spine decomposition for $\widetilde{\bP}^{\theta}_{x,i}$ and the proof is also postponed in Section \ref{section:proof_spine}. Since the projection of $\widetilde{\bP}^{\theta}_{x,i}$ on $\cF$ is $\bP^{\theta}_{x,i}$, the latter is also described by this spine decomposition. 
    \begin{theorem}[Spine decomposition]\label{thm_spine_decomposition}
        Let $x\in \RR$ and $i\in \cI$. Under $\widetilde{\bP}^{\theta}_{x,i}$, the branching MAP is described as follows. 
    \begin{itemize}
    \item The spine $\xi$ evolves according to a MAP $(\chi,\Theta)$ of law $\mathbb{P}^{\theta}_{x,i}$ with characteristics given by \eqref{MAP_new_characher}.

    \item Given the type process $J_{\xi}(t)$ of the spine, the branching rate of the spine at time $t\ge 0$ is given by $\widetilde{\beta}_{J_{\xi}(t)}$, where $\widetilde{\beta}_i = \beta_{i} m_{i}, i\in \cI$; when it splits, it gives birth to an offspring of the same type at the same position. The number of children is given by the size-biased law $(\widetilde{\mu}_j(k) := \frac{k\mu_j(k)}{m_j},k\ge 1)$, for $j\in \cI$ with $m_j>0$; if $m_j=0$, then $\widetilde{\beta}_j=0$, meaning that the spine can never split at state $j$, and we simply define by convention that $\widetilde{\mu}_j(1)=1$.
    
    \item Choose one child uniformly at random, which continues as the spine; for the other children, each of them leads a subpopulation of the original law $\mathbf{P}$ shifted to their point and time of creation. The spine continues in a similar way.  
    \end{itemize}
    \end{theorem}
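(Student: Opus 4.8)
The plan is to build the law described in the statement by hand and then verify that it has density $\zeta_t/\zeta_0$ with respect to $\widetilde{\bP}_{x,i}$ on each $\widetilde{\cF}_t$; since $\widetilde{\cF}=\cup_{t\ge0}\widetilde{\cF}_t$, this identifies it with $\widetilde{\bP}^\theta_{x,i}$, and the projection onto $\cF$ is then $\bP^\theta_{x,i}$ by construction. The starting point is the elementary factorisation obtained by comparing the definition of $\zeta_t$ with that of $\Xi_\theta$ in \eqref{def_Xi_theta} (here the initial particle is at $(x,i)$, and $J_\xi(s)$ is the type of the spine at time $s$, so that $\int_0^t(\beta(m-1))(J_\xi(s))\,\dd s=\int_0^t(\beta m-\beta)(J_\xi(s))\,\dd s$):
\begin{equation}\label{eq:zeta-factor}
\frac{\zeta_t}{\zeta_0}=\frac{\Xi_\theta(t)}{\Xi_\theta(0)}\cdot\Big(\prod_{v\prec\xi_t}A_v\Big)\,e^{-\int_0^t(\beta m-\beta)(J_\xi(s))\,\dd s}.
\end{equation}
The first factor depends only on the spine trajectory $(X_\xi,J_\xi)$ and is therefore $\widetilde{\cG}_t$-measurable, while the whole right-hand side is $\cG_t$-measurable, in the notation of \eqref{eqn:Sigma_fields_G}. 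The proof then runs by peeling off the conditional expectations of $\zeta_t/\zeta_0$ along the chain $\widetilde{\cG}_t\subset\widehat{\cG}_t\subset\cG_t\subset\widetilde{\cF}_t$.

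\emph{Off-spine subtrees.} Because $\zeta_t/\zeta_0$ is $\cG_t$-measurable, we have $\widetilde{\bE}_{x,i}[\zeta_t/\zeta_0\mid\cG_t]=\zeta_t/\zeta_0$, so for any bounded $\widetilde{\cF}_t$-measurable $g$, $\widetilde{\bE}^\theta_{x,i}[g]=\widetilde{\bE}_{x,i}\big[\widetilde{\bE}_{x,i}[g\mid\cG_t]\,\zeta_t/\zeta_0\big]$. Under $\widetilde{\bP}_{x,i}$, conditionally on $\cG_t$ the population at time $t$ consists of the spine particle together with independent copies of the branching MAP of law $\bP$, each rooted at the creation point and time of an off-spine child of a spine ancestor; hence this conditional structure is unchanged under $\widetilde{\bP}^\theta_{x,i}$, and it suffices to describe the $\cG_t$-marginal of $\widetilde{\bP}^\theta_{x,i}$. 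Next, conditionally on $\widehat{\cG}_t$, the litter sizes $(A^{(k)})_{k\le n_t}$ at the spine's successive fission times $\sigma_1<\dots<\sigma_{n_t}\le t$ are independent under $\widetilde{\bP}_{x,i}$ with respective laws $\mu_{J_\xi(\sigma_k)}$; since the only $\cG_t$-part of \eqref{eq:zeta-factor} not yet $\widehat{\cG}_t$-measurable is $\prod_{v\prec\xi_t}A_v=\prod_k A^{(k)}$, under $\widetilde{\bP}^\theta_{x,i}$ these become independent with the size-biased laws $\widetilde{\mu}_{J_\xi(\sigma_k)}(\cdot)=(\cdot)\,\mu_{J_\xi(\sigma_k)}(\cdot)/m_{J_\xi(\sigma_k)}$, and $\widetilde{\bE}_{x,i}[\zeta_t/\zeta_0\mid\widehat{\cG}_t]=\frac{\Xi_\theta(t)}{\Xi_\theta(0)}e^{-\int_0^t(\beta m-\beta)(J_\xi(s))\,\dd s}\prod_k m_{J_\xi(\sigma_k)}$.

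\emph{Branching rate and the spine MAP.} Conditionally on $\widetilde{\cG}_t$, under $\widetilde{\bP}_{x,i}$ the fission times of the spine form a (time-inhomogeneous) Poisson process with intensity $\beta_{J_\xi(s)}$; the remaining $\widehat{\cG}_t$-factor from the previous step, namely $\big(\prod_k m_{J_\xi(\sigma_k)}\big)e^{-\int_0^t\beta_{J_\xi(s)}(m_{J_\xi(s)}-1)\,\dd s}$, is exactly the exponential-martingale weight of the Poisson Girsanov transform that turns this process into a Poisson process of intensity $\beta_{J_\xi(s)}m_{J_\xi(s)}$ (the compensator normalising the product to mean one). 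Taking this last conditional expectation leaves $\widetilde{\bE}_{x,i}[\zeta_t/\zeta_0\mid\widetilde{\cG}_t]=\Xi_\theta(t)/\Xi_\theta(0)$, which is precisely the Radon--Nikodym derivative of Lemma~\ref{lemma_spine_MAP}; tilting the law of $(X_\xi,J_\xi)$ by it yields a MAP with characteristics \eqref{MAP_new_characher}, i.e.\ the spine moves with law $\PP^\theta_{x,i}$. Assembling the four conditional layers shows that the law described in the statement has density $\zeta_t/\zeta_0$ against $\widetilde{\bP}_{x,i}$ on $\widetilde{\cF}_t$, hence equals $\widetilde{\bP}^\theta_{x,i}$; the uniform choice of the continuing child and formula \eqref{equation_choose_spine} for selecting the spine under the new measure follow from \eqref{eq:spine-unif} and \eqref{def_tildeQ} as already recorded.

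\emph{Main obstacle.} The routine part is \eqref{eq:zeta-factor} and the Poisson/MAP Girsanov identities; the delicate point is justifying the nested conditioning rigorously — i.e.\ establishing that, under $\widetilde{\bP}_{x,i}$, conditionally on $\widetilde{\cG}_t$ the spine trajectory, the fission times, the litter marks and the off-spine subtrees genuinely decouple as used above, so that tilting by the $\cG_t$-measurable density \eqref{eq:zeta-factor} acts layer by layer. A related subtlety is that the MAP change of measure in Lemma~\ref{lemma_spine_MAP} is applied to the spine whose type component also modulates the branching clock, so the order of the conditioning matters. Finally, the case in which a particle may die without offspring requires replacing the uniform-choice identity \eqref{eq:spine-unif} by its appropriate variant; this is handled in Section~\ref{section:proof_spine} as announced.
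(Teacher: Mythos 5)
Your proposal is correct and follows essentially the same route as the paper: the factorisation \eqref{eq:zeta-factor} is precisely the paper's decomposition $\zeta_t=\zeta_t^{(1)}\zeta_t^{(2)}\zeta_t^{(3)}$, and the layer-by-layer conditioning along $\widetilde{\cG}_t\subset\widehat{\cG}_t\subset\cG_t\subset\widetilde{\cF}_t$ amounts to substituting these factors into the explicit product density \eqref{def_tildeP}, using the same three ingredients (Cox-process Girsanov for the fission clock, size-biasing of the offspring, and Lemma~\ref{lemma_spine_MAP} for the spine MAP). The nested conditional-independence you flag as delicate is exactly what the paper's a~priori construction of $\widetilde{\bP}_{x,i}$ as the product density \eqref{def_tildeP} renders automatic, so once that formula is in hand the peeling is rigorous.
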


    A direct consequence is the following many-to-one formula, which is well-known in context of branching particle systems: for every non-negative measurable function $g$ and any $(x,i)$, 
    \begin{align}\label{eq:many-to-one}
         &\frac{e^{\theta x}}{V_i(\theta)}  \bE_{x,i}\bigg[\sum_{u\in \cN_t} g(X_u(t), J_u(t)) e^{-\theta X_u(t)-\lambda(\theta) t} V_{J_u(t)}(\theta) \bigg]\\
         =&\,  \widetilde{\bE}^{\theta}_{x,i}[g(X_{\xi}(t), J_{\xi}(t))] = \EE^{\theta}_{x,i}[g(\chi_t, \Theta_t)].
    \end{align}

\subsection{Proof of Theorem \ref{Biggins_theorem}}
\begin{lemma}\label{speed of spine}
    For any $x\in \RR, i\in \cI$, the linear speed of the spine under $\widetilde{\bP}^{\theta}_{x,i}$ is 
    \begin{equation}
        \lim_{t\to \infty}\frac{X_\xi(t)}{t} = -\lambda'(\theta).
    \end{equation}
\end{lemma}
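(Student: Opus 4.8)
The plan is to read off the law of the spine's spatial component from the spine decomposition and then invoke the law of large numbers for MAPs. By Theorem~\ref{thm_spine_decomposition}, under $\widetilde{\bP}^{\theta}_{x,i}$ the spatial motion $X_{\xi}$ of the spine is exactly the first coordinate of a MAP $(\chi,\Theta)$ of law $\PP^{\theta}_{x,i}$, i.e.\ a MAP with characteristic triplet $\big((\widetilde{\phi}_i)_{i\in\cI},\widetilde{Q},\widetilde{G}\big)$ as in Lemma~\ref{lemma_spine_MAP}. Its matrix exponent is $\widetilde{F}(\alpha):=\diag(\widetilde{\phi}_i(\alpha))_{i\in\cI}+\widetilde{Q}\circ\widetilde{G}(\alpha)$; let $\widetilde{\gamma}(\alpha)$ be its Perron--Frobenius eigenvalue. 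Proposition~\ref{speed of MAP} then gives $\lim_{t\to\infty}X_{\xi}(t)/t=-\widetilde{\gamma}'(0)$, $\widetilde{\bP}^{\theta}_{x,i}$-a.s., so the task reduces to showing $\widetilde{\gamma}'(0)=\lambda'(\theta)$.

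The key step is a diagonal conjugation relating $\widetilde{F}$ to $\mathcal{M}$. Writing $D_{\theta}:=\diag(V_1(\theta),\dots,V_{\mathtt{d}}(\theta))$, I claim
\[
\widetilde{F}(\alpha)\;=\;D_{\theta}^{-1}\,\mathcal{M}(\theta+\alpha)\,D_{\theta}\;-\;\lambda(\theta)\,I,\qquad \alpha\in[0,\bar{\theta}-\theta).
\]
For the off-diagonal entries this is immediate from Lemma~\ref{lemma_spine_MAP}: for $k\neq j$, $\widetilde{q}_{kj}\widetilde{G}_{kj}(\alpha)=q_{kj}G_{kj}(\theta+\alpha)V_j(\theta)/V_k(\theta)=\big(D_{\theta}^{-1}\mathcal{M}(\theta+\alpha)D_{\theta}\big)_{kj}$, using \eqref{matric exponent}. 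For the diagonal, the difference $\big(D_{\theta}^{-1}\mathcal{M}(\theta+\alpha)D_{\theta}\big)_{kk}-\widetilde{F}(\alpha)_{kk}$ is independent of $\alpha$ (the $\phi_k(\theta+\alpha)$ terms cancel) and equals $\phi_k(\theta)-q_k+\beta_k(m_k-1)+\sum_{j\neq k}q_{kj}G_{kj}(\theta)V_j(\theta)/V_k(\theta)$; dividing the $k$-th coordinate of $\mathcal{M}(\theta)\vec{V}(\theta)=\lambda(\theta)\vec{V}(\theta)$ by $V_k(\theta)$ shows this equals $\lambda(\theta)$, which proves the identity.

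Since $D_{\theta}^{-1}\mathcal{M}(\theta+\alpha)D_{\theta}$ is similar to $\mathcal{M}(\theta+\alpha)$, its Perron--Frobenius eigenvalue is $\lambda(\theta+\alpha)$, so $\widetilde{\gamma}(\alpha)=\lambda(\theta+\alpha)-\lambda(\theta)$. Because $\theta\in(0,\bar{\theta})$, Lemma~\ref{lem:diff_of_V} ensures $\alpha\mapsto\lambda(\theta+\alpha)$ is finite and infinitely differentiable on a right-neighbourhood of $0$, whence $\widetilde{\gamma}'(0)=\lambda'(\theta)$; combined with the first paragraph this yields $\lim_{t\to\infty}X_{\xi}(t)/t=-\lambda'(\theta)$, $\widetilde{\bP}^{\theta}_{x,i}$-a.s. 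There is no substantial obstacle here: the analytic work is already done in Lemma~\ref{lemma_spine_MAP} and Proposition~\ref{speed of MAP}, and the only point needing care is the bookkeeping in the conjugation --- in particular verifying that the branching term $\beta_k(m_k-1)$ present in $\mathcal{M}(\theta)$ but absent from the spine's own generator is exactly absorbed by the Perron--Frobenius relation, and that $\alpha\mapsto\lambda(\theta+\alpha)$ is finite near $0$ so that Proposition~\ref{speed of MAP} legitimately applies.
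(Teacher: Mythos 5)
Your proposal is correct and follows the same essential route as the paper: identify the spine's motion as a MAP via Lemma~\ref{lemma_spine_MAP}, compute the Perron--Frobenius eigenvalue of its matrix exponent $\widetilde{F}(\alpha)$, and invoke Proposition~\ref{speed of MAP}. The only difference is cosmetic: the paper verifies directly that $\widetilde{V}_i(\alpha)=V_i(\theta+\alpha)/V_i(\theta)$ is a positive eigenvector of $\widetilde{F}(\alpha)$ with eigenvalue $\lambda(\theta+\alpha)-\lambda(\theta)$, whereas you package the same entry-by-entry bookkeeping as the conjugation identity $\widetilde{F}(\alpha)=D_{\theta}^{-1}\mathcal{M}(\theta+\alpha)D_{\theta}-\lambda(\theta)I$ and then read off the Perron--Frobenius eigenvalue from similarity invariance; your eigenvector $D_{\theta}^{-1}\vec{V}(\theta+\alpha)$ is precisely the paper's $\widetilde{V}(\alpha)$. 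Both use the eigenvalue equation $\mathcal{M}(\theta)\vec{V}(\theta)=\lambda(\theta)\vec{V}(\theta)$ to absorb the branching and switching terms on the diagonal. Your formulation is marginally more structural (it transfers the entire spectrum, not just the top eigenvalue), but this buys nothing extra for the lemma; the arithmetic is identical. Your attention to the finiteness and differentiability of $\lambda(\theta+\alpha)$ near $\alpha=0$ is appropriate and matches the paper's implicit use of Lemma~\ref{lem:diff_of_V}.
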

\begin{proof}
    According to Lemma \ref{lemma_spine_MAP}, under $\widetilde{\mathbf{P}}_{x,i}^{\theta} $, the spine behaves as a MAP with matrix exponent 
    \[\widetilde{F}(\alpha) = \diag(\widetilde{\phi}_i(\alpha))_{i\in\mathcal{I}}+\left(\widetilde{q}_{ij}\widetilde{G}_{ij}(\alpha)\right)_{i,j\in \mathcal{I}}.\]
    For each $i \in \mathcal{I}$, set $\widetilde{V}_{i}(\alpha) := \frac{V_i(\theta+\alpha)}{V_i(\theta)}$ and note that $G_{ii}(\theta) = G_{ii}(\theta+\alpha) = 1$, we have  
    \begin{align*}
        [\widetilde{F}(\alpha)&\widetilde{V}(\alpha)]_i = \left(\phi_i(\alpha+\theta)-\phi_i(\theta)\right)\widetilde{V}_i(\alpha) + 
        \sum_{j\neq i}\frac{q_{ij} V_j(\theta) G_{ij}(\theta+\alpha)}{V_i(\theta)}\widetilde{V}_j(\alpha) + \widetilde{q}_{ii}\widetilde{V}_i(\alpha)\\
        &= \left(\phi_i(\alpha+\theta)-\phi_i(\theta)+ 
        \sum_{j\neq i}
        \frac{q_{ij} V_j(\theta+\alpha) G_{ij}(\theta+\alpha)} {V_i(\theta+\alpha)}
              -\sum_{j\neq i}\frac{q_{ij} V_j(\theta) G_{ij}(\theta)}{V_i(\theta)}\right)\widetilde{V}_i(\alpha)\\
        &=\left(\left[\mathcal{M}(\theta+\alpha)\frac{\vec{V}(\theta+\alpha)}{V_i(\theta+\alpha)}\right]_{i}-\left[\mathcal{M}(\theta)\frac{\vec{V}(\theta)}{V_i(\theta)}\right]_{i}\right)\widetilde{V}_i(\alpha)\\
        &=\left(\lambda(\theta+\alpha)-\lambda(\theta)\right)\widetilde{V}_i(\alpha).
    \end{align*}
    Therefore, $\left({\widetilde{V}}_1(\alpha), \dots, \widetilde{V}_{\mathtt{d}}(\alpha) \right)$ is an eigenvector of $F(\alpha)$ with all entries positive. Then it is a PF eigenvector of $\widetilde{F}(\alpha)$, with the PF eigenvalue $\widetilde{\lambda}(\alpha) = \lambda(\alpha+\theta)-\lambda(\theta)$. Then, according to Lemma~\ref{speed of MAP}, the MAP has a linear speed of $-(\widetilde{\lambda})'(0) = -\lambda'(\theta)$.
\end{proof}

We are now ready to prove Theorem \ref{Biggins_theorem}.
\begin{proof}[Proof of Theorem \ref{Biggins_theorem}]
\ 
\paragraph{The degenerate phase}
Suppose that at least one of the following two conditions hold: (i) $\theta \lambda'(\theta)\ge \lambda(\theta)$; (ii) there exists some $j\in \cI$ such that $\sum_{k\ge 1}(k\log k)\mu_{j}(k)=+\infty$ (then we must have $\beta_j m_j>0$). 
We next show that in either case $\limsup_{t\to\infty} W_{\theta}(t) = +\infty$ 
$\widetilde{\mathbf{P}}_{x,i}^{\theta}$-a.s.\ and then  by \cite[Theorem 4.3.5]{PTEDurrett} we conclude that $W_{\theta}(t)$ converges to $0$, $\mathbf{P}_{x,i}$-a.s.

    (i) We first assume that $\theta \lambda'(\theta)\ge \lambda(\theta)$. 
        Under $\widetilde{\mathbf{P}}_{x,i}^{\theta}$, the branching system does not extinct. For $t\ge 0$,
        \[
        W_{\theta}(t) \ge e^{-\theta X_{\xi}(t)-\lambda(\theta)t}V_{J_{\xi}(t)}(\theta).
        \]
        By Lemma \ref{speed of spine},  we have $\lim_{t\to \infty}X_{\xi}(t)/t = -\lambda'(\theta)$, $\widetilde{\mathbf{P}}_{x,i}^{\theta}$-a.s. Thus, when $\theta \lambda'(\theta) >  \lambda(\theta)$, we have 
        \begin{equation}\label{eqn: limsup of spine}
            \widetilde{\mathbf{P}}_{x,i}^{\theta}\left(\limsup_{t\to\infty}(-\theta X_{\xi}(t)-\lambda(\theta)t)=+\infty\right) = 1.
        \end{equation}
        When $\theta \lambda'(\theta) = \lambda(\theta)$, by Proposition \ref{speed of MAP}, we also have \eqref{eqn: limsup of spine}. 
        Therefore, we have 
        \begin{equation}
        \limsup_{t\to\infty} W_{\theta}(t) = +\infty, \quad\widetilde{\mathbf{P}}_{x,i}^{\theta}\mbox{-a.s.}
        \end{equation}
        
        (ii) We next  assume that there exists some $j\in \cI$ such that $\sum_{k\ge 1}(k\log k)\mu_{j}(k)=+\infty$. 
        We also assume that $\theta \lambda'(\theta)< \lambda(\theta)$; otherwise it falls into the first case.
        
        Let $T_n$ denote the $n$-th fission (branching) time of the spine. For state $j$, let $T_m^{j}$ be the time the spine undergoes its $m$-th fission at state $j$. Therefore $\{A_{\xi}(T_{n}^{j}):= A_{\xi_{T_{n}^{j}}}\}_{n\ge 1}$ is an i.i.d. sequence with law $(\widetilde{\mu}_j(k))_{k\ge 1}$. 
        Since ${\widetilde{\mathbf{E}}_{x,i}}^{\theta}\left[\log A_{\xi}(T_{n}^j)\right] = \frac{1}{m_j}\sum_{k\ge 1}(k\log k)\mu_j(k)=\infty$, the Borel-Cantelli lemma leads to 
        \begin{equation}\label{eqn:logA/n}
           \limsup_{n\to\infty}
            \frac{\log A_{\xi}(T_{n}^{j})}{n}=+\infty,\quad \widetilde{\mathbf{P}}_{x,i}^{\theta}\text{-a.s.}
        \end{equation}

        According to Theorem~\ref{thm_spine_decomposition}, 
        the process $(T_m^{j})_{m\ge 1}$ is a Cox process on $\RR_+$ with rate $\widetilde{\beta}_{j}\ind{{J}_{\xi}(t) = j} \dd t$, directed by $({J}_{\xi}(t))_{t\ge0}$, where $\widetilde{\beta}_{j} := \beta_j m_j$ and $(J_{\xi}(t),t\geq 0)$ is an irreducible Markov chain specified by \eqref{MAP_new_characher}. Let $(\widetilde{\pi}_j)_{j\in\mathcal{I}}$ be the invariant distribution of $(J_{\xi}(t),t\geq 0)$.
        Then we have 
        \begin{align}
            \lim_{n\to\infty}\frac{T_n^{j}}{n} &= \frac{1}{\widetilde{\pi}_j\widetilde{\beta}_j}, \quad \widetilde{\mathbf{P}}_{x,i}^{\theta}\text{-a.s.} \label{eqn:lim Tnj/n}
        \end{align}
         Let us justify \eqref{eqn:lim Tnj/n}. We have by the ergodic theorem that
        \begin{equation}\label{eqn:lim int_beta}
            \lim_{t\to \infty}\frac{1}{t}\int_{0}^{t}\widetilde{\beta}_{j}\ind{{J}_{\xi}(s) = j}\dd s = \widetilde{\pi}_j\widetilde{\beta}_j, \quad \widetilde{\mathbf{P}}_{x,i}^{\theta}\text{-a.s.}
        \end{equation}
        Given $J_\xi$, the counting process $(N^j(t):= \max\{n: T_n^j\le t\})_{t\ge 0}$ is an inhomogeneous Poisson process with rate $\widetilde{\beta}_{j}\ind{{J}_{\xi}(t) = j}\dd t$. 
        Notice that as $t\to\infty$, the integral 
        \begin{equation}
            \int_0^{t}\widetilde{\beta}_{J_{\xi}(s)}\ind{{J}_{\xi}(s) = j}\dd s \to \infty, \quad \widetilde{\mathbf{P}}_{x,i}^{\theta}\mbox{-a.s.}     
        \end{equation}
        Therefore, we have
        \begin{equation*}
            \widetilde{\mathbf{P}}_{x,i}^{\theta}\left( \lim_{t\to \infty}\frac{N^j(t)}{\int_{0}^t\widetilde{\beta}_{j}\ind{{J}_{\xi}(s) = j}\dd s} = 1\Bigg| J_\xi\right)=1.
        \end{equation*}
        It follows that
        \begin{equation}\label{eqn:lim Nt/beta}
            \lim_{t\to \infty}\frac{N^j(t)}{\int_{0}^t\widetilde{\beta}_{j}\ind{{J}_{\xi}(s) = j}\dd s} = 1, \quad \widetilde{\mathbf{P}}_{x,i}^{\theta}\text{-a.s.}
        \end{equation}
        Combining \eqref{eqn:lim int_beta} and \eqref{eqn:lim Nt/beta} we have
        \begin{equation*}
            \lim_{t\to \infty}\frac{N^j(t)}{t} = \widetilde{\pi}_j\widetilde{\beta}_j>0, \quad \widetilde{\mathbf{P}}_{x,i}^{\theta}\text{-a.s.}
        \end{equation*}
        Taking $t=T_n^j$ and noticing that $N^j(T_n^j) = n$, we deduce \eqref{eqn:lim Tnj/n}. 

        At time $T_n^j$, we have a lower bound of the additive martingale,
        \begin{align}
            W_{\theta}(T_n^j)&\ge A_{\xi}(T_{n}^j)e^{-\theta X_{\xi}(T_n^j) -\lambda(\theta)T_n^j}V_{j}(\theta)\\
            \label{lower bound of Wn}
            &=\exp\left\{n\left(\frac{\log A_{\xi}(T_{n}^j)}{n}-\frac{T^j_n}{n}\left(\frac{\theta X_{\xi}(T_n^j)}{T_n^j}+\lambda(\theta)\right)\right)\right\}V_{j}(\theta).
        \end{align}
        In the case of $\theta \lambda'(\theta)< \lambda(\theta)$ and $\sum_{k\ge 1}(k\log k)\mu_{j}(k)=+\infty$ for $j\in\cI$ with $\beta_jm_j>0$,  
        by \eqref{eqn:logA/n}, \eqref{eqn:lim Tnj/n} and \eqref{lower bound of Wn}, we again have $\limsup_{t\to \infty}W_{\theta}(t)=+\infty, \widetilde{\mathbf{P}}_{x,i}^{\theta}$-a.s.

    \paragraph{The $\mathcal{L}^1$-convergence phase} On the other hand, assume that $\theta \lambda'(\theta) < \lambda(\theta)$ and $\sum_{k\ge 1} (k \log k) \mu_j(k) < \infty$ for all $j \in \mathcal{I}$.  We prove    that the additive martingale converges in $\mathcal{L}^1(\mathbf{P}_{x,i})$. 
    Recall that $\cG$ defined by \eqref{eqn:Sigma_fields_G} is the $\sigma$-field generated by the motion and type of the spine with its fission time and children. By \cite[Lemma~4.2]{ShiBRW}, it suffices to show:
    \begin{equation}\label{eqn:limsup_Wt}
    \limsup_{t\to +\infty}\widetilde{\mathbf{E}}_{x,i}^{\theta}\left[W_{\theta}(t)\mid \cG\right]<+\infty,\quad \widetilde{\mathbf{P}}_{x,i}^{\theta}\text{-a.s.}
    \end{equation}
    With notation in  Section~\ref{subsection_spine_decom}, we have the following decomposition:
    \begin{equation}\label{condition on spine}
        \widetilde{\mathbf{E}}_{x,i}^{\theta}\left[W_{\theta}(t)\mid\cG\right] = e^{-\theta X_{\xi}(t)-\lambda(\theta)t}V_{J_{\xi}(t)}(\theta)+\sum_{n=1}^{\infty}\ind{T_n\le t}A_{\xi}(T_n) e^{-\theta X_{\xi}(T_n)-\lambda(\theta)T_n}V_{J_{\xi}(T_n)}(\theta) .
    \end{equation} 
    When $\theta \lambda'(\theta) <\lambda(\theta)$, the first term above converges to $0$, $\widetilde{\mathbf{P}}_{x,i}^{\theta}$-a.s. 
    For the second term, we have 
    \begin{align*}
          &\sum_{n=1}^{\infty}\ind{T_n\le t}A_{\xi}(T_n) e^{-\theta X_{\xi}(T_n)-\lambda(\theta)T_n}V_{J_{\xi}(T_n)}(\theta) \\
        &= \sum_{j\in \mathcal{I}} \sum_{n=1}^{\infty}\ind{T_n\le t} A_{\xi}(T_{N_j(n)}) e^{-\theta X_{\xi}({T_{N_j(n)}})-\lambda(\theta)T_{N_j(n)}}V_{j}(\theta) \\
        &= \sum_{j\in \cI}\sum_{n=1}^{\infty} \ind{T_n\le t} V_{j}(\theta) 
        \exp{\left\{N_j(n)\left[\frac{\log A_{\xi}(T_{N_j(n)})}{N_j(n)}+\frac{T_{N_j(n)}}{N_j(n)}\left(-\frac{\theta X_{\xi}({T_{N_j(n)}})}{T_{N_j(n)}}-\lambda(\theta)\right)\right]\right\}}.
    \end{align*}
    Since $\sum_{k\ge 1}(k\log k)\mu_j(k)<+\infty$ for $\forall j \in \cI$, by the Borel-Cantelli lemma, we have
    \begin{equation}
    \widetilde{\mathbf{P}}_{x,i}^{\theta} \left(\limsup_{n\to+\infty}\frac{\log A_{\xi}(T_{N_j(n)})}{N_j(n)}=0\right)=1.
    \end{equation}
    Therefore, for any $j\in \mathcal{I}$, by \eqref{eqn:lim Tnj/n}, there exists $c_j>0$ such that $\widetilde{\mathbf{P}}_{x,i}^{\theta}$-a.s.
    \begin{equation*}
        \limsup_{n\to\infty}\left(\frac{\log A_{\xi}(T_{N_j(n)})}{N_j(n)}+\frac{T_{N_j(n)}}{N_j(n)}\left(-\frac{\theta X_{\xi}(T_{N_j(n)})}{T_{N_j(n)}}-\lambda(\theta)\right)\right)\le  c_{j}\left(\theta \lambda'(\theta)-\lambda(\theta)\right)<0.
    \end{equation*}
    Then we have \eqref{eqn:limsup_Wt}.

    We now show that, when $W_{\theta}$ converges in $\mathcal{L}^1(\bP_{x,i})$, we have $W_{\theta}(\infty)>0$ on the non-extinction event $\mathscr{S}$. 
    
    Define $w(i):= \bP_{x,i}(W_\theta(\infty) = 0)$, for $i\in\mathcal{I}$, and $\vec{w}:=(w(1), \ldots, w(\mathtt{d}))$. Note that $w(i)$ does not depend on $x$. By \cite[Theorem 4.3.5]{PTEDurrett}, when $W_{\theta}$ converges in $\mathcal{L}^1(\bP_{x,i})$, we have $\mathbf{E}_{x,i}\left[\frac{W_{\theta}(\infty)}{W_{\theta}(0)}\right] = 1$. This implies $\vec{w} \neq (1, \dots, 1)$.
    By strong Markov property on the first branching/type-changing time, we have
    \begin{equation}
        w(i) = \frac{\beta_i (\sum_{k\ge 0} \mu_{i}(k)w(i)^k) + \sum_{j\neq i}q_{ij} w(j)}{\beta_i+q_i},
    \end{equation}
    for $i\in \mathcal{I}$. This is equivalent to the equation:
    \begin{equation}
        \diag \bigg(\beta_1 \Big(\sum_{k\ge 0} \mu_i(k) w(1)^k -w(1)\Big), \ldots, \beta_{\mathtt{d}} \Big(\sum_{k\ge 0} \mu_i(k) w({\mathtt{d}})^k -w({\mathtt{d}})\Big)\bigg) + Q w^{\top} = 0. 
    \end{equation}
    This is the same equation as \eqref{eqn:surv_prob}. Then $\vec{w}$ is the unique solution on $[0,1]^d \setminus \{(1,\dots,1)\}$ of the equation, and therefore $\vec{w} = \vec{\bq}$, i.e. $\bP_{x,i}(W_{\theta}(\infty) = 0) = \bP_{x,i}(\mathscr{S}^c)$. Note that $\mathscr{S}^c\subseteq \{W_{\theta}(\infty) = 0\}$. 
    This implies $\bP_{x,i}\left(\{W_{\theta}(\infty)>0\} \Delta \mathscr{S}\right) = 0$.
\end{proof}

\subsection{Velocity of the leftmost particle}
\begin{proof}[Proof of Corollary \ref{thm:speed of Lt}]
    For simplicity, we write $L_t := \min_{u\in \mathcal{N}_t} X_u(t)$. 
    
    We first show that,  $\bP_{x,i}$-a.s.\ on $\mathscr{S}$, we have 
	\begin{equation}\label{eqn:liminf_Lt}
	    \liminf_{t\to +\infty}\frac{L_t}{t} \ge -\inf_{\theta\in [\theta^*, \bar{\theta})}\frac{\lambda(\theta)}{\theta} = -\frac{\lambda(\theta^*)}{\theta^*}.
	\end{equation}
    We start by deriving a simple lower bound for $W_\theta(t)$ on $\mathscr{S}$.
	\begin{align*}
		W_{\theta}(t) \ge \exp{\{-\theta L_t-\lambda(\theta)t\}}\min_{i\in\mathcal{I}}V_i(\theta).
	\end{align*}
	By Theorem \ref{Biggins_theorem}
	, when $\theta\in [\theta^*, \bar{\theta})$, the additive martingale $W_\theta(t)$ converges to 0, $\mathbf{P}_{x,i}$-a.s. Therefore by the above inequality, $\mathbf{P}_{x,i}$-a.s. on $\mathscr{S}$\ 
    \begin{equation}\label{eq:min_-infty}
     \lim_{t\to +\infty}( \theta L_t +\lambda(\theta)t) = \infty. \end{equation}
    This implies \eqref{eqn:liminf_Lt}.

     We then show that, 
     $\bP_{x,i}$-a.s.\ on $\mathscr{S}$, we have 
 	\begin{equation}\label{eqn:limsup_Lt}
 	    \limsup_{t\to +\infty}\frac{L_t}{t} \le -\sup_{\theta\in (0, \theta^*)}\lambda'(\theta) = \lambda'(\theta^*).
 	\end{equation}
    Using Proposition \ref{speed of MAP} to the spine under $\widetilde{\bP}^{\theta}_{x,i}$ yields that 
    \[
    \limsup_{t\to +\infty}\frac{L_t}{t} \le\limsup_{t\to +\infty}\frac{X_{\xi}(t)}{t} = -\lambda'(\theta) \quad \widetilde{\bP}^{\theta}_{x,i}\text{-a.s.} 
    \]
    Therefore, $\limsup_{t\to +\infty}\frac{L_t}{t} \le -\lambda'(\theta), \bP^{\theta}_{x,i}$-a.s. 
    Since Theorem~\ref{Biggins_theorem} yields that $\bP^{\theta}_{x,i}$ and $\bP_{x,i}$ are equivalent on the non-extinction event $\mathscr{S}$, we have $\limsup_{t\to +\infty}\frac{L_t}{t} \le -\lambda'(\theta)$ almost surely under $\bP_{x,i}$. 
    Optimizing in $\theta\in (0, \theta^*)$ yields \eqref{eqn:limsup_Lt}.

    Combining \eqref{eqn:liminf_Lt} and \eqref{eqn:limsup_Lt}, and using the identity $\lambda'(\theta^*) = \frac{\lambda(\theta^*)}{\theta^*}$ from \ref{A2}, we complete the proof.
\end{proof}

\section{Convergence of the derivative martingales}\label{section: the derivative martingales}
In this section, we assume \ref{A1} \ref{A2} and prove Theorem~\ref{thm:asymptotTruncated} for the derivative martingale at criticality $\theta = \theta^*$. The proof relies on a study of the truncated derivative martingales that we introduce in section~\ref{sec:truncated-dm}. 

\subsection{Truncated derivative martingales}\label{sec:truncated-dm}
Recall the spine decomposition in Section~\ref{subsection_spine_decom}. Under assumption~\ref{A2}, we perform the change of measure with the critical value $\theta = \theta^*$. Then the spine $(X_{\xi}(t), J_{\xi}(t))_{t\ge 0}$ under $\widetilde{\mathbf{P}}_{x,i}^{\theta^*}$ is a MAP with matrix exponent given by Lemma \ref{lemma_spine_MAP}. 
Define
\begin{equation}
  (\widehat{X}_{\xi}(t), J_{\xi}(t)) :=  (\theta^* {X}_{\xi}(t) + \lambda(\theta^*) t, J_{\xi}(t)), \qquad t\ge 0.  
\end{equation}
In particular, we have $\widetilde{\mathbf{E}}_{x,i}^{\theta^*}[\widehat{X}_{\xi}(t)] =0$ and $\widetilde{\mathbf{E}}_{x,i}^{\theta^*}[\widehat{X}_{\xi}^2(t)] <\infty$.   

For $x\in \RR$, let $\tau_x:= \inf \{t\ge 0\colon \widehat{X}_{\xi}(t) <x\}$. According to \cite[Theorem 29]{DLK17}, there exists finite and non-negative functions $(R_i)_{i\in \cI}$, such that the process 
\begin{equation}\label{eq:R-mart}
    \big(R_{J_{\xi}(t)}(\widehat{X}_{\xi}(t)) \ind{\tau_{0} > t},\quad  t \geq 0 \big) 
\quad \text{is a non-negative} ~\widetilde{\mathbf{P}}_{0,i}^{\theta^*}~\text{-martingale.}
\end{equation}
The function $(R_i)_{i\in\cI}$ are continuous and non-decreasing, which are referred to as the renewal functions for the MAP $(\widehat{X}_{\xi}(t), J_{\xi}(t))_{t\ge 0}$. 

\begin{lemma}\label{lem:renewal theorem}
There exists a constant $c_{\text{ren}}\in (0,\infty)$,  such that for every $i\in \mathcal{I}$, 
    \begin{equation}\label{eqn:renewal theorem}
        \lim_{u\to+\infty}\frac{R_{i}(u)}{u} = c_{\text{ren}}.
    \end{equation}
\end{lemma}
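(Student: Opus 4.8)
### Proof proposal for Lemma~\ref{lem:renewal theorem}

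The plan is to deduce the linear growth of the renewal functions $(R_i)_{i\in\cI}$ from the Markov renewal theorem applied to the ascending ladder height process of the centered MAP $(\widehat X_\xi(t), J_\xi(t))_{t\ge0}$ under $\widetilde{\mathbf P}^{\theta^*}_{0,i}$. First I would record the structural facts we already have: by construction $\widehat X_\xi$ is the MAP with matrix exponent $\theta\mapsto \widetilde\lambda(\theta-\theta^*)=\lambda(\theta)-\lambda(\theta^*)$ rescaled, so that $\widetilde{\mathbf E}^{\theta^*}_{0,i}[\widehat X_\xi(t)]=0$ (this is exactly the criticality identity $\lambda(\theta^*)=\theta^*\lambda'(\theta^*)$ from \ref{A2}) and $\widetilde{\mathbf E}^{\theta^*}_{0,i}[\widehat X_\xi(t)^2]<\infty$, the latter because the matrix exponent is smooth at $\theta^*$ and the type space is finite. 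The oscillation statement in Proposition~\ref{speed of MAP} (mean-zero, non-degenerate MAP oscillates) guarantees that the ascending ladder height process $(H_n,\Theta(T_n^+))_{n\ge0}$ is a genuine (non-terminating) Markov random walk on $\RR_+\times\cI$, and the renewal function $R_i(u)$ is, up to the normalization chosen in \cite[Theorem 29]{DLK17}, the expected number of ladder epochs with ladder height at most $u$ started from type $i$.

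The key steps, in order: (1) identify $R_i(u)$ with the Markov renewal measure $U_i([0,u]):=\sum_{n\ge0}\widetilde{\mathbf P}^{\theta^*}_{0,i}(H_n\le u)$ (or its smoothed version), using the ladder-height representation of \cite[Theorem 29]{DLK17} exactly as in the proof of Lemma~\ref{lem:integral-cond}; (2) check the finite mean condition for the ascending ladder height: since $\widehat X_\xi$ has a finite second moment and zero mean, the Markov-additive analogue of the classical fact $\mathbf E[H_1]<\infty$ holds (e.g.\ via Wald-type identities for MAPs, or \cite[Chapter~XI]{Asmussen}), with the ladder height process being positive-recurrent-modulated because $Q$, hence the modulating chain, is irreducible on a finite set; (3) apply the Markov renewal theorem (as cited in the excerpt, \cite{Lalley84} or \cite[Section~VII.4]{Asmussen}): in the non-lattice case $U_i((u,u+h])\to h\,\pi^{H}\text{-weighted}/\mu_H$ as $u\to\infty$ for every fixed $h$, and in the lattice case the analogous statement along the lattice; summing/integrating gives $U_i([0,u])/u\to 1/\mu_H=:c_{\mathrm{ren}}$, a \emph{finite, strictly positive} constant \emph{independent of the starting type} $i$ (the limit forgets the initial modulating state — this is the content of the Markov renewal theorem); (4) transfer this asymptotic from $U_i$ to $R_i$ using the fact that $R_i$ is the harmonic/renewal function appearing in \eqref{eq:R-mart} and differs from $U_i$ only by bounded correction terms (overshoot contributions, which are $O(1)$ because ladder heights have finite mean), so the $u\to\infty$ ratio limit is unchanged.

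The main obstacle I expect is step~(2)–(3): verifying that the ascending ladder height process of the \emph{Markov-modulated} walk $\widehat X_\xi$ has finite mean ladder height and that the Markov renewal theorem applies with a type-independent limit. For ordinary random walks this is classical (finite variance $\Rightarrow$ finite mean ladder height, Spitzer), but in the MAP setting one must control the modulation: the relevant statement is that $\widehat X_\xi$ decomposed at ladder epochs is a positive recurrent Markov random walk, which follows from irreducibility of the finite-state modulator together with the mean-zero, finite-second-moment and oscillation properties. I would handle this exactly in parallel with the renewal-measure bounds already established inside the proof of Lemma~\ref{lem:integral-cond} (where the measures $R^+_{i,j}$ were shown to have $R^+_{i,j}([nA,(n+1)A))\le C A$), upgrading those two-sided bounds to the exact limit via the Markov renewal theorem; the type-independence of $c_{\mathrm{ren}}$ is then automatic, since the stationary distribution of the ladder-modulating chain is unique by irreducibility. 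The lattice/non-lattice dichotomy is handled as in Lemma~\ref{lem:integral-cond}: if some $\chi_i$ is a compound Poisson process on $r\ZZ$ one works along the lattice, otherwise the non-lattice Markov renewal theorem applies directly.
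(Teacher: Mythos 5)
Your proposal is correct and takes essentially the same route as the paper: finite second moment plus zero mean of $\widehat{X}_{\xi}$ gives a ladder height process with finite positive (stationarily weighted) mean, and the Markov renewal theorem (valid in the lattice case as well) then yields linear growth of $R_i$ with a strictly positive slope that is independent of the starting type. The only slips are cosmetic: the relevant process is the \emph{descending} ladder height process of $\widehat{X}_{\xi}$ (equivalently the ascending one of $-\widehat{X}_{\xi}$), since that is the renewal function appearing in \eqref{eq:R-mart}, and the paper obtains the finite-mean ladder height fact directly from the tight-overshoot equivalence in \cite[Theorem 35 and Lemma 38]{DLK17} rather than via Wald-type identities, before invoking \cite{Lalley84} or \cite[Theorem 28(i)]{DLK17} for the limit.
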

\begin{proof}
    Since $\widetilde{\mathbf{E}}_{x,i}^{\theta^*}[(-\widehat{X}_{\xi})^2(t)] <\infty$, we know from \cite[Theorem 35 and Lemma 38]{DLK17} 
    that $(-\widehat{X}_{\xi},J_{\xi})$ has tight overshoot under $\widetilde{\mathbf{P}}_{x,i}^{\theta^*}$, and  equivalently,  $\sum_{j\in \cI}\widetilde{\pi}_j \widetilde{\mathbf{E}}_{0,i}^{\theta^*} [H^{-}(1)]\in (0,\infty)$, where $H^{-}$ is the so-called descending ladder height process     of $(\widehat{X}_{\xi},J_{\xi})$  under $\widetilde{\mathbf{P}}_{0,i}^{\theta^*}$ and $\widetilde{\pi}$ is the stationary distribution of $J_\xi(t)$ under $\widetilde{\mathbf{P}}_{0,i}^{\theta^*}$.
    \footnote{Note that, as remarked by \cite[Page 1995]{DLK17}, we do not require non-lattice assumption.}   
    Then it follows from the Markov renewal theory (see  e.g.\ \cite{Lalley84} or \cite[Theorem 28 (i)]{DLK17};     note that the version we use here holds for the lattice case as well) that 
    \[   \lim_{u\to+\infty}\frac{R_{i}(u)}{u} = \frac{\sum_{j\in \cI} \widetilde{\pi}_j^2}{\sum_{j\in \cI}\widetilde{\pi}_j \widetilde{\mathbf{E}}_{0,i}^{\theta^*} [H^{-}(1)]}\in (0,\infty).\qedhere 
    \]
\end{proof}

For $b> \max(-\theta^*x, 0)$, we define \emph{the truncated derivative martingale} under $\mathbf{P}_{x,i}$: 
\begin{equation}\label{eqn:defTruncatedDerivativeMartingale}
	Z_{\theta^*}^{(b)}(t) := \sum_{u \in \mathcal{N}_t} R_{J_u(t)}\left(\widehat{X}_u(t) +b\right) \ind{\inf_{s \leq t} \widehat{X}_u(s) \ge -b} e^{-\widehat{X}_u(t)} V_{J_u(t)}(\theta^*), \qquad t\ge 0, 
	\end{equation}
    where $\widehat{X}_u(t) := \theta^* {X}_{u}(t) + \lambda(\theta^*) t$. 
Indeed, applying the many-to-one formula \eqref{eq:many-to-one} and then \eqref{eq:R-mart}, we have  
\begin{align*}
    \bE_{x,i} \left[Z_{\theta^*}^{(b)}(t)\right] &  = 
    V_i(\theta^*)e^{- \theta^*x} \widetilde{\mathbf{E}}_{x,i}^{\theta^*}\left[R_{J_{\xi}(t)}(\widehat{X}_{\xi}(t)+b) \ind{\inf_{s \leq t} \widehat{X}_{\xi}(s) \ge -b} \right]\\
    &=  V_i(\theta^*) e^{- \theta^*x} R_i( \theta^*x+b). 
\end{align*}
Then it follows from the branching property that $Z_{\theta^*}^{(b)}(t)$ is a non-negative martingale under $\bP_{x,i}$, and therefore converges a.s.\ to a limit $Z_{\theta^*}^{(b)}(\infty)\ge 0$ as $t\to\infty$. 

The convergence of the derivative martingale, stated in Theorem~\ref{thm:asymptotTruncated}, would rely on the  the following $\mathcal{L}^1$-convergence result of the truncated derivative martingale $Z_{\theta^*}^{(b)}$. 
\begin{theorem}[Uniform integrability]\label{thm:L1_conv_Zb}
Let $x\in \RR$ and $b> \max(-\theta^*x, 0)$. 
Assume \ref{A1}, \ref{A2}, and \eqref{eq:dev-ui}. 
Then $Z_{\theta^*}^{(b)}$ is a uniform integrable martingale under $\mathbf{P}_{x,i}$, and  $\bE_{x,i}\left[\frac{Z_{\theta^*}^{(b)}(\infty)}{Z_{\theta^*}^{(b)}(0)}\right] = 1$. 
\end{theorem}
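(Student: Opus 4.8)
The plan is to prove uniform integrability of $Z_{\theta^*}^{(b)}$ via the spine decomposition, following the now-standard ``Lyons--Kurtz'' strategy as in \cite{ShiBRW}: perform the change of measure using the martingale $Z_{\theta^*}^{(b)}$ itself, identify the branching process under the new measure, and then show $\limsup_{t\to\infty}\widetilde{\mathbf E}^{\theta^*}_{x,i}[Z_{\theta^*}^{(b)}(t)\mid \widetilde{\cG}]<\infty$ a.s.\ under the spine measure. First I would set up the change of measure. Write $\bP^{\theta^*,b}_{x,i}$ for the measure obtained by tilting $\bP_{x,i}$ by $Z^{(b)}_{\theta^*}(t)/Z^{(b)}_{\theta^*}(0)$ on $\cF_t$. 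By combining the martingale property \eqref{eq:R-mart} for the renewal function with the spine change of measure in Lemma~\ref{lemma_spine_MAP}, the spine under $\widetilde{\bP}^{\theta^*,b}_{x,i}$ should be the MAP $(\widehat X_\xi, J_\xi)$ of Section~\ref{sec:truncated-dm} further conditioned to stay above $-b$ (an $h$-transform by $R_{J_\xi(t)}(\widehat X_\xi(t)+b)\ind{\inf_{s\le t}\widehat X_\xi(s)\ge -b}$), while the branching rate along the spine is still $\beta_j m_j$ when in type $j$ and the offspring sizes are size-biased. The key point is that on the event $\{\inf_s\widehat X_\xi(s)\ge -b\}$ the spine never leaves the half-line, so all the ``killed'' indicator constraints in $Z^{(b)}_{\theta^*}$ are automatically satisfied along the spine.

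Next I would write the spine decomposition of $\widetilde{\mathbf E}^{\theta^*,b}_{x,i}[Z^{(b)}_{\theta^*}(t)\mid \widetilde{\cG}]$. Conditioning on the spine trajectory, its fission times $(T_n)$, and the offspring numbers $(A_{\xi}(T_n))$, the expectation decomposes (as in \eqref{condition on spine}) into a ``spine term'' $R_{J_\xi(t)}(\widehat X_\xi(t)+b)e^{-\widehat X_\xi(t)}V_{J_\xi(t)}(\theta^*)$ plus a sum over fission times of terms of the form $(A_\xi(T_n)-1)$ times the expected contribution of a fresh subtree started from the spine's position at time $T_n$ under the \emph{original} law $\bP$. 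Using the computation already in the excerpt, $\bE_{x,i}[Z^{(b)}_{\theta^*}(s)]=V_i(\theta^*)e^{-\theta^*x}R_i(\theta^*x+b)$, so the subtree rooted at a point with $\widehat X_\xi(T_n)=y$ and type $j$ contributes at most $C\,R_j(y+b)\,e^{-y}V_j(\theta^*)$, uniformly in the remaining time (here one needs the monotonicity of $R_j$ and the fact that killing below $-b$ only decreases the expectation). Hence, up to a constant,
\begin{equation}
\widetilde{\mathbf E}^{\theta^*,b}_{x,i}\bigl[Z^{(b)}_{\theta^*}(t)\mid \widetilde{\cG}\bigr]\le C\, R_{J_\xi(t)}(\widehat X_\xi(t)+b)e^{-\widehat X_\xi(t)}+C\sum_{n\colon T_n\le t}A_\xi(T_n)\,R_{J_\xi(T_n)}(\widehat X_\xi(T_n)+b)\,e^{-\widehat X_\xi(T_n)}.
\end{equation}

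It then remains to show the right-hand side has finite $\limsup$ as $t\to\infty$, a.s.\ under $\widetilde{\bP}^{\theta^*,b}_{x,i}$. For the spine term, by Lemma~\ref{lem:renewal theorem} $R_{J_\xi(t)}(\widehat X_\xi(t)+b)\sim c_{\mathrm{ren}}(\widehat X_\xi(t)+b)$, so the term is $O((\widehat X_\xi(t)+b)e^{-\widehat X_\xi(t)})$, which stays bounded since $\widehat X_\xi(t)\ge -b$ and $\widehat X_\xi(t)\to\infty$ under the conditioned law (the conditioned MAP drifts to $+\infty$; this requires a lemma on the transience of the $h$-transformed MAP, analogous to \cite{DLK17}). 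For the sum, one controls $A_\xi(T_n)$ via the $L\log L$-type bound: because $\sum_k k(\log k)^2\mu_j(k)<\infty$ for all $j$, Borel--Cantelli gives $\log A_\xi(T_n)=o(n)$ (indeed $o(\log^2 n)$ suffices) $\widetilde{\bP}^{\theta^*,b}_{x,i}$-a.s.; meanwhile, for the conditioned-to-stay-positive MAP one has the ballistic/diffusive estimates $\widehat X_\xi(T_n)\ge c\log n$ eventually and, more precisely, $e^{-\widehat X_\xi(T_n)}$ is summable against polynomial factors. Combining $R_{J_\xi(T_n)}(\widehat X_\xi(T_n)+b)e^{-\widehat X_\xi(T_n)}\le C(\widehat X_\xi(T_n)+b)e^{-\widehat X_\xi(T_n)}$ with the lower bound on $\widehat X_\xi(T_n)$ and $\log A_\xi(T_n)=o(\widehat X_\xi(T_n))$ makes the $n$-th summand summable, hence the whole sum finite a.s. I expect the main obstacle to be establishing the precise a.s.\ lower bounds on the trajectory of the conditioned spine $\widehat X_\xi$ at the fission times — i.e.\ proving that a MAP conditioned to stay positive satisfies $\widehat X_\xi(T_n)\gtrsim \log n$ (or a sharp enough analogue) with summable exceptional probabilities, since the second-moment bound on $\widehat X_\xi(1)$ only gives diffusive control a priori and one must leverage the conditioning. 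This is the multitype-MAP analogue of the small-deviation estimates for conditioned random walks used in the single-type branching random walk literature \cite{ShiBRW,Aidekon13}; the needed ingredients (renewal theorem, tightness of overshoots) are available from \cite{DLK17}, but assembling them into the required pathwise estimate is the technical heart of the proof.
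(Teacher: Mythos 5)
Your overall framework is the same as the paper's: change measure by $Z^{(b)}_{\theta^*}$ itself, identify the spine as the MAP $(\widehat X_\xi,J_\xi)$ $h$-transformed by $R_{J}(\cdot+b)\ind{\inf\ge -b}$ (i.e.\ conditioned to stay above $-b$) with size-biased offspring at rate $\beta_j m_j$, and then verify $\limsup_t\widetilde{\bE}^{\uparrow}_{x,i}[Z^{(b)}_{\theta^*}(t)\mid\cG]<\infty$ a.s., invoking \cite[Lemma~4.2]{ShiBRW}; the spine-term analysis via Lemma~\ref{lem:renewal theorem} and transience of the conditioned MAP is also as in the paper. The gap is in the last, decisive step, and you have correctly identified it yourself: your argument needs a pathwise comparison between $\log A_\xi(T_n)$ and $\widehat X_\xi(T_n)$ along the conditioned spine, and the bounds you propose do not close it. Under the size-biased law, \eqref{eq:dev-ui} gives $\widetilde{\bE}[(\log A)^2]<\infty$, so Borel--Cantelli yields only $\log A_\xi(T_n)=o(\sqrt n)$ (your ``$o(n)$, indeed $o(\log^2 n)$ suffices'' conflates what is obtained with what is needed), while a lower bound of the form $\widehat X_\xi(T_n)\ge c\log n$ is far too weak: the summand $A_\xi(T_n)\,(\widehat X_\xi(T_n)+b)\,e^{-\widehat X_\xi(T_n)}$ can then be of size $e^{\varepsilon\sqrt n}\,n^{-c}$, which is not summable. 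Worse, the comparison $\log A_\xi(T_n)=o(\widehat X_\xi(T_n))$ does not follow from the two marginal statements, since the conditioned spine has $\liminf \widehat X_\xi(t)/\sqrt t=0$ (Bessel-type lower fluctuations), so occasional spikes of $\log A$ of order $\varepsilon\sqrt n$ cannot be absorbed without a joint rare-event argument. Establishing the required small-deviation/escape-rate estimates for a MAP conditioned to stay positive is exactly the part you leave unproven, so as written the proof is incomplete.

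The paper avoids this entirely: after the spine decomposition it splits the fission-time sum at the threshold $k\le e^{X_\xi(s)/3}$, and controls \emph{expectations} rather than paths. For the small-offspring part it undoes the $h$-transform and bounds $\widetilde{\bE}^{\uparrow}[A_1]$ by $\int\widetilde{\bE}_{x,i}[R^2_{J_\xi(s)}(b+X_\xi(s))V_{J_\xi(s)}e^{-\frac23 X_\xi(s)}\ind{\inf_{r\le s}X_\xi(r)\ge-b}]\,\dd s$, which is finite by the perpetual-integral Lemma~\ref{lem:integral-cond} for the (unconditioned) MAP killed below $-b$, proved via the ladder-height renewal measures of \cite{DLK17}; for the large-offspring part it shows the Cox process of fissions with $k>e^{X_\xi(s)/3}$ has a.s.\ finitely many atoms, again by the compensation formula and Lemma~\ref{lem:integral-cond} applied to $f(r,j)=(b+r)\sum_k\widetilde\mu_j(k)\ind{3\log k>r}$, whose integrability is precisely where $\sum_k k(\log k)^2\mu_j(k)<\infty$ enters. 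If you want to salvage your route, the cleanest fix is to adopt this split: bound the truncated sum in expectation (no pathwise lower bound on $\widehat X_\xi$ needed) and show the exceptional set of fission times where $\log A_\xi(T_n)>\widehat X_\xi(T_n)/3$ is a.s.\ finite, rather than trying to prove an escape-rate estimate for the conditioned MAP.
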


\subsection{Proof of Theorem \ref{thm:L1_conv_Zb}}\label{subsec: L1_conv_Zb}
In this subsection we prove Theorem~\ref{thm:L1_conv_Zb}.  
To simplify the analysis, we perform the linear transformation 
$\widehat{X}_u(t) = \theta^* {X}_u(t) + \lambda(\theta^*) t$, $t\ge 0$.
This yields a new branching MAP $(\widehat{X}_u(t), J_u(t))$ for which assumption \ref{A2} becomes: 
\begin{enumerate}[label=(A3), ref=(A3),leftmargin=5.0em]
\item $\lambda(\theta^*) = \lambda'(\theta^*) = 0$ and $\theta^* = 1$. \label{A3}
\end{enumerate}
Since the results are preserved under the linear transformation, it suffices to prove for the transformed branching MAP, and then the corresponding statements for the original branching MAP follow immediately. 

Therefore, we assume that the 
assumptions \ref{A1} and \ref{A3} hold for the branching MAP  $(X_u(t), J_u(t))$ and prove Theorem~\ref{thm:L1_conv_Zb}.
To study the $\mathcal{L}^1$ convergence of the truncated derivative martingale $Z_{\theta^*}^{(b)}$, let us introduce a further change of measure.   
For any $x \ge -b$ and $i\in \cI$, let  $\widetilde{\bP}^{\theta^*}_{x,i}$ be defined as in \eqref{def_tildeQ} with $\theta^*=1$. 
Recall  by Lemma~\ref{lemma_spine_MAP} that the spine $(X_{\xi}(t), J_{\xi}(t))$ under $\widetilde{\bP}^{\theta_*}_{x,i}$ is a MAP with characteristics given by \eqref{MAP_new_characher}. Then define a change of measure by the martingale \eqref{eq:R-mart} associated with the spine: for every $t \geq 0$, 
\begin{equation}
\label{eqn:branching-Conditioned}
\left. \frac{\dd\widetilde{\bP}^\uparrow_{x,i}}{\dd\widetilde{\bP}^{\theta_*}_{x,i}} \right|_{\widetilde{\mathcal{F}}_t} := \frac{R_{J_{\xi}(t)}(X_{\xi}(t)+b)}{R_i(x+b)} \ind{\inf_{s\le t} X_{\xi}(s) \ge -b}. 
\end{equation}

To describe the law of the spine under $\widetilde{\bP}^\uparrow_{x,i}$, let us denote by ${\PP}^{\theta_*}_{x,i}$ the law  of a MAP $(\chi,\Theta)$ with characteristics given by \eqref{MAP_new_characher} and define a change of measure: 
    \begin{equation}\label{eq:MAP-cond}
    \left. \frac{\dd\PP_{x,i}^{\uparrow}}{\dd{\PP}^{\theta_*}_{x,i}} \right|_{\mathcal{F}_t^{(\chi,\Theta)}} := \frac{R_{\Theta(t)}(\chi(t)+b)}{R_i(x+b)} \ind{\inf_{s\le t} \chi(s) \ge -b}.  
    \end{equation}
Then $\PP_{x,i}^{\uparrow}$ is known as the law of  \emph{a MAP conditioned to stay positive}; we refer to \cite[Appendices A.7-A.8]{DLK17} for more details. Lévy processes (with single type) conditioned to stay positive have been the subject of a large literature; we refer to \cite{ChD05} and reference therein. 
Define $\bP_{x,i}^{\uparrow}$ as the projection of $\widetilde{\bP}_{x,i}^{\uparrow}$ on $(\mathcal{T}, \cF)$. By projection \eqref{equation_choose_spine} and change of measure \eqref{eq:measure-change-P}, we also deduce the connection with the original law $\bP_{x,i}$: for $t\ge 0$,   
\begin{equation}
    \left. \frac{\dd{\bP}^\uparrow_{x,i}}{\dd\bP_{x,i}} \right|_{\mathcal{F}_t}=\frac{Z_{\theta^*}^{(b)}(t)}{R_i(x+b)}. 
\end{equation}

Similar to Theorem~\ref{thm_spine_decomposition} for $\widetilde{\bP}_{x,i}^{\theta^*}$, under $\widetilde{\bP}^{\uparrow}_{x,i}$, we also have a spinal description of the dynamics, specified in the following proposition. The only difference lies in the spine's movement. Specifically, the spine under $\widetilde{\bP}_{x,i}^{\uparrow}$ is a MAP conditioned to stay positive, whereas under $\widetilde{\bP}_{x,i}^{\theta^*}$ it is unconditioned. 
For completeness we give a proof of Proposition~\ref{prop:spine-decomp-2} in Section~\ref{sec:spine-2}.

\begin{proposition}\label{prop:spine-decomp-2}
Let $x\in \RR$ and $i\in \cI$. Under $\widetilde{\bP}_{x,i}^{\uparrow}$, 
the branching MAP is described
as follows.
\begin{itemize}
     \item There is a spine starting from position $x$ with type $i$ and moves according to a MAP conditioned to stay positive defined by \eqref{eq:MAP-cond}. 
     \item Given the type process $\Theta$ of the spine, the branching rate at time $t\ge 0$ is given by $\beta_{\Theta_t} m_{\Theta_t}$;
     when it splits, it gives birth to an offspring of the same type at the same position. The number of children is given by the size-biased law $\widetilde{\mu}_j(k) := \frac{k\mu_j(k)}{m_j}$, for $k\ge 1$ and $j\in \cI$. 
     \item Choose one child uniformly at random, which continues as the spine; each of the other children leads a subpopulation of the original law $\mathbf{P}$, independent of each other. The spine continues in a similar way.  
\end{itemize}
\end{proposition}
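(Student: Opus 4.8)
The plan is to exploit the fact that the Radon--Nikodym derivative in \eqref{eqn:branching-Conditioned} defining $\widetilde{\bP}^{\uparrow}_{x,i}$ from $\widetilde{\bP}^{\theta^*}_{x,i}$ depends on the \emph{spine alone}. Indeed, $(X_{\xi}(t),J_{\xi}(t))$ and $\inf_{s\le t}X_{\xi}(s)$ are all measurable with respect to $\widetilde{\cG}_t=\sigma(X_{\xi}(s),J_{\xi}(s):0\le s\le t)$, so the density
\[
\Upsilon_t := \frac{R_{J_{\xi}(t)}(X_{\xi}(t)+b)}{R_i(x+b)}\,\ind{\inf_{s\le t}X_{\xi}(s)\ge -b}
\]
is $\widetilde{\cG}_t$-measurable. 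A change of measure by such a density reweights the law of the spine but leaves untouched the conditional law, given the whole spine trajectory, of all the remaining ``branching data'' (fission times along the spine, offspring numbers, and subtrees hanging off the spine). Since Theorem~\ref{thm_spine_decomposition} already identifies that conditional law under $\widetilde{\bP}^{\theta^*}_{x,i}$, combining the two will give the claim. I would carry this out in three steps.

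First, I would verify that $\widetilde{\bP}^{\uparrow}_{x,i}$ is consistently defined, i.e.\ that $(\Upsilon_t)_{t\ge0}$ is a $\widetilde{\bP}^{\theta^*}_{x,i}$-martingale with respect to the full filtration $(\widetilde{\cF}_t)$ and not merely with respect to the spine filtration $(\widetilde{\cG}_t)$. The latter martingale property holds by (the version started from $x$ and shifted by $b$ of) \eqref{eq:R-mart}; to upgrade it, I would use that, by the construction in Theorem~\ref{thm_spine_decomposition}, the branching data on $[0,t]$ is a measurable function of the spine path on $[0,t]$ together with extra randomness independent of the spine's future, so the future increments of the spine given $\widetilde{\cF}_t$ have the same law as given $\widetilde{\cG}_t$; hence $\widetilde{\bE}^{\theta^*}_{x,i}[\Upsilon_{t+s}\mid\widetilde{\cF}_t]=\widetilde{\bE}^{\theta^*}_{x,i}[\Upsilon_{t+s}\mid\widetilde{\cG}_t]=\Upsilon_t$.

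Second, I would identify the marginal law of the spine: for bounded $\widetilde{\cG}_t$-measurable $F$, using that under $\widetilde{\bP}^{\theta^*}_{x,i}$ the spine is a MAP of law $\PP^{\theta^*}_{x,i}$ (first bullet of Theorem~\ref{thm_spine_decomposition}) and then the definition \eqref{eq:MAP-cond}, one gets $\widetilde{\bE}^{\uparrow}_{x,i}[F]=\widetilde{\bE}^{\theta^*}_{x,i}[F\Upsilon_t]=\EE^{\uparrow}_{x,i}[F]$, so the spine is a MAP conditioned to stay positive. Third, I would show that the conditional structure is preserved: for bounded $\widetilde{\cF}_t$-measurable $G$ and bounded $\widetilde{\cG}_t$-measurable $H$, conditioning on $\widetilde{\cG}_t$ inside $\widetilde{\bE}^{\theta^*}_{x,i}[GH\Upsilon_t]$ (legitimate since $H$ and $\Upsilon_t$ are $\widetilde{\cG}_t$-measurable) gives $\widetilde{\bE}^{\uparrow}_{x,i}[G\mid\widetilde{\cG}_t]=\widetilde{\bE}^{\theta^*}_{x,i}[G\mid\widetilde{\cG}_t]$; thus the regular conditional distribution of the branching data given the spine trajectory coincides with the one under $\widetilde{\bP}^{\theta^*}_{x,i}$, which by Theorem~\ref{thm_spine_decomposition} is precisely the Cox fissions at rate $\beta_{J_{\xi}(\cdot)}m_{J_{\xi}(\cdot)}$, the size-biased offspring laws $\widetilde{\mu}_j(k)=k\mu_j(k)/m_j$, a uniformly chosen spine child, and independent $\mathbf{P}$-subtrees for the rest. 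Letting $t\to\infty$ (equivalently, arguing on $\widetilde{\cF}=\cup_{t\ge0}\widetilde{\cF}_t$) yields the full description, and the corresponding statement for $\bP^{\uparrow}_{x,i}$ on $(\mathcal{T},\cF)$ follows by projection.

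The main obstacle will be the (routine but slightly delicate) measure-theoretic bookkeeping underlying Steps~1 and~3: making precise that a spine-measurable change of measure does not affect the conditional law of the branching data given the spine, and that the renewal martingale of \eqref{eq:R-mart} remains a martingale for the enlarged filtration. Both reduce to conditional-independence properties already implicit in the proof of Theorem~\ref{thm_spine_decomposition}; since only the marginal law of the spine changes --- from the unconditioned MAP to the one conditioned to stay positive via \eqref{eq:MAP-cond} --- essentially all the rest of that proof can be reused verbatim.
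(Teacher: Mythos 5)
Your proposal is correct and follows essentially the same route as the paper: there, too, $\widetilde{\bP}^{\uparrow}_{x,i}$ is obtained from a density whose extra factor $R_{J_{\xi}(t)}(\widehat{X}_{\xi}(t)+b)\ind{\inf_{s\le t}\widehat{X}_{\xi}(s)\ge -b}$ is measurable with respect to the spine alone, so that only the spine's law is tilted from the $\theta^*$-MAP to the MAP conditioned to stay above $-b$, while the Cox fission times at rate $m\beta$, the size-biased offspring law, the uniform spine choice and the independent $\mathbf{P}$-subtrees are read off exactly as in Theorem~\ref{thm_spine_decomposition}. The only difference is presentational: the paper re-derives the explicit factorization of the density (setting $\zeta_t^{\uparrow}=\zeta_t^{(1)}\zeta_t^{(2)\uparrow}\zeta_t^{(3)}$ and repeating the decomposition from the proof of Theorem~\ref{thm_spine_decomposition}), whereas you encode the same mechanism as the invariance of the conditional law of the branching data given the spine under a spine-measurable change of measure.
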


\begin{proof}[Proof of Theorem \ref{thm:L1_conv_Zb}]
Denote by $\mathbf{M}(\dd s, \dd k)$ the counting measure (on $\RR_+\times \mathbb{Z}_{>0}$) of the spine's fission times and number of children. Let $\mathcal{G}$ be the $\sigma$-field generated by the spine $(X_{\xi}, J_{\xi})$ and $\mathbf{M}$, as given in Lemma \ref{condition on spine}. By \cite[Lemma~4.2]{ShiBRW}, to show that $(Z_{\theta^*}^{(b)}(t), t \geq 0)$ is uniformly integrable, and $\bE_{x,i}\left[\frac{Z_{\theta^*}^{(b)}(\infty)}{Z_{\theta^*}^{(b)}(0)}\right] = 1$, it suffices to prove that 
	\begin{equation}\label{eqn:aimui}
        \limsup_{t \to \infty} \widetilde{\bE}_{x,i}^{\uparrow} \left[ Z_{\theta^*}^{(b)}(t) \,\middle|\, \mathcal{G}\right] < \infty \quad \widetilde{\bP}_{x,i}^{\uparrow}\text{-a.s.}
	\end{equation}
    
	By the spinal decomposition, we have
	\begin{align*}
	\widetilde{\bE}_{x,i}^{\uparrow} \left[ Z_{\theta^*}^{(b)}(t) \,\middle|\,  \mathcal{G} \right] =&\, R(b + X_{\xi}(t)) e^{-X_{\xi}(t)}\\
	&+ \int_{[0,t]\times \mathbb{Z}_{>0} } k R_{J_{\xi}(s)}(b+ X_{\xi}(s)) V_{J_{\xi}(s)}(\theta^*)  e^{-X_{\xi}(s)}  \mathbf{M}(\dd s,  \dd k).
	\end{align*}   
	As $X_{\xi}(t)$ under $\widetilde{\bP}^{\uparrow}_{x,i}$ is a MAP conditioned to stay above $-b$,  by \cite[Proposition~33]{DLK17} we have \ $\lim_{t \to \infty} X_{\xi}(t) = +\infty$ $\widetilde{\bP}^{\uparrow}_{x,i}$-a.s. So the first term above converges to $0$, $\widetilde{\bP}^{\uparrow}_{x,i}$-a.s.
     
	To deal with the second term, we divide the above integral into two parts 
	\begin{align*}
	     A_1 &:= \int_{\RR_+ \times \mathbb{Z}_{>0} } k\ind{k \leq e^{\frac{1}{3}X_{\xi}(s)}} R_{J_{\xi}(s)}(b+ X_{\xi}(s)) V_{J_{\xi}(s)}(\theta^*)  e^{-X_{\xi}(s)}  \mathbf{M}(\dd s, \dd k), \\
	     A_2 &:=\int_{\RR_+ \times \mathbb{Z}_{>0} } k\ind{k > e^{\frac{1}{3}X_{\xi}(s)}} R_{J_{\xi}(s)}(b+ X_{\xi}(s)) V_{J_{\xi}(s)}(\theta^*)  e^{-X_{\xi}(s)}  \mathbf{M}(\dd s, \dd k).
	\end{align*}
    We now prove that $A_1$ and $A_2$ are both $\widetilde{\bP}^{\uparrow}_{x,i}$-a.s.\@ finite. 
 
    Recall that $\mathbf{M}$ is a Cox process, in the sense that  given $(X_{\xi}(t), {J}_{\xi}(t))_{t\ge 0}$, the conditional distribution of $\mathbf{M}$ is a Poisson point process with intensity $\widetilde{\beta}_{{J}_{\xi}(t)} \dd t \otimes \dd \widetilde{\mu}_{{J}_{\xi}(t)}$, with each $\widetilde{\mu}_{i}$ the size-biased offspring distribution. Let $C_{\mathcal{I}}:= \max_{i\in\mathcal{I}}\widetilde{\beta}_i > 0$. Then the intensity is bounded by
    $C_{\cI} \dd t \otimes \dd \widetilde{\mu}_{{J}_{\xi}(t)}\ind{\widetilde{\beta}_{J_{\xi}(t)}>0}$. The compensation formula leads to 
    \begin{align*}
    &\hspace{1.5em} \widetilde{\bE}^{\uparrow}_{x,i}[A_1]\\ &\le C_{\mathcal{I}}\widetilde{\bE}^{\uparrow}_{x,i}\bigg[ \int_{\RR_+} \!\!R_{J_{\xi}(s)}(b\!+\! X_{\xi}(s)) V_{J_{\xi}(s)}(\theta^*) e^{-X_{\xi}(s) } \Big( \sum_{k\ge 1} \widetilde{\mu}_{J_s}(k) k \ind{k \leq e^{\frac{1}{3}X_{\xi}(s)}}\Big)  \ind{\widetilde{\beta}_{J_{\xi}(s)}>0}\dd s   \bigg] \\
    &\le  C_{\mathcal{I}}\int_{\RR_+}\widetilde{\bE}^{\uparrow}_{x,i}\left[R_{J_{\xi}(s)}(b+ X_{\xi}(s)) V_{J_{\xi}(s)}(\theta^*) e^{-\frac{2}{3}X_{\xi}(s) }\ind{\widetilde{\beta}_{J_{\xi}(s)}>0}\right]\dd s\\
    &=   C_{\mathcal{I}} \int_{\RR_+}  \widetilde{\bE}_{x,i} \left[ \frac{R_{J_{\xi}(s)}(b+ X_{\xi}(s))}{R_{i}(b+x)} R_{J_{\xi}(s)}(b+ X_{\xi}(s))\ind{\inf_{r\le s}X_{\xi}(r)\ge -b} V_{J_{\xi}(s)}(\theta^*) \right.\\
    &\hspace{8em}\left.e^{-\frac{2}{3}X_{\xi}(s) }\ind{\widetilde{\beta}_{J_{\xi}(s)}>0}  \right]\dd s.
    \end{align*}
    Since $\int  R_{j}^2(b+ x) V_{j}(\theta^*) e^{-x}  e^{x/3}  \dd x \le C \int  x^2 e^{-2x/3} \dd x<\infty$, it follows from Lemma~\ref{lem:integral-cond} that $\widetilde{\bE}^{\uparrow}_{x,i}[A_1]<\infty$. Therefore, we have $A_1<\infty$, $\widetilde{\bP}^{\uparrow}_{x,i}$-a.s. 

    On the other hand,  to prove $A_2$ is $\widetilde{\bP}^{\uparrow}_{x,i}$-a.s.\@ finite, it suffices to prove that the following integral is finite: 
    \begin{equation}\label{eqn:total mass M}
        I := \int_{\RR_+ \times \mathbb{Z}_{>0}} \ind{k > e^{\frac{1} {3}X_{\xi}(s)}} \mathbf{M}(\dd s, \dd k)<\infty,\quad \widetilde{\bP}^{\uparrow}_{x,i} \text{-a.s.}
    \end{equation}
    Again, by the compensation formula, we have 
    \begin{align*}
    &\widetilde{\bE}^{\uparrow}_{x,i}[I] \le C_{\mathcal{I}}\widetilde{\bE}^{\uparrow}_{x,i}\bigg[ \int_{\RR_+}  \sum_{k\ge 0} \widetilde{\mu}_{J_s}(k) \ind{k > e^{\frac{1}{3}X_{\xi}(s)}} \ind{\widetilde{\beta}_{J_{\xi}(s)}>0}\dd s   \bigg] \\ 
    &\le C_{\mathcal{I}}\int_{\RR_+} \widetilde{\bE}^{\uparrow}_{x,i}\bigg[ \sum_{k\ge 0} \widetilde{\mu}_{J_s}(k) \ind{k > e^{\frac{1}{3}X_{\xi}(s)}}\ind{\widetilde{\beta}_{J_{\xi}(s)}>0}\bigg]\dd s\\
    &=  C_{\mathcal{I}} \int_{\RR_+}\widetilde{\bE}_{x,i}\bigg[ \frac{R_{J_{\xi}(s)}(b+X_{\xi}(s))}{R_i(b+x)}\ind{\inf_{r\le s}X_{\xi}(r) \ge -b}\sum_{k\ge 0} \widetilde{\mu}_{J_s}(k) \ind{k > e^{\frac{1}{3}X_{\xi}(s)}}\ind{\widetilde{\beta}_{J_{\xi}(s)}>0}\bigg]\dd s. 
    \end{align*}
    Applying Lemma~\ref{lem:integral-cond} with $f(r, j):= (b+r)\sum_{k\ge 1} \widetilde{\mu}_{j}(k)  \ind{3\log k > r}\ind{\widetilde{\beta}_j>0}$, we obtain 
    \begin{align*}
    \int_{-b}^{\infty} f(r, j) \dd r & =\int_{-b}^{\infty}(b+r)\sum_{k\ge 1} \widetilde{\mu}_{j}(k)  \ind{3\log k > r}\ind{\widetilde{\beta}_j>0} \dd r \\
    &\le  \frac{1}{2} \sum_{k\ge 1}\max_{i\in\cI} \left\{\widetilde{\mu}_i(k)\ind{\widetilde{\beta}_i>0}\right\}(3 \log k+b)^2. 
    \end{align*}
    This is finite under the assumption \eqref{eq:dev-ui}, as we have 
    \begin{align}
    \sum_{k\ge 1} \max_{i\in\cI} \left\{\widetilde{\mu}_i(k)\ind{\widetilde{\beta}_i>0}\right\} (\log k)^2 
    &\le \sum_{i\in\mathcal{I}} \sum_{k\ge 1}  \widetilde{\mu}_i(k)\ind{\widetilde{\beta}_i>0} (\log k)^2 \\
    &= \sum_{i\in\mathcal{I}} \frac{1}{m_i}\ind{\widetilde{\beta}_i>0}\sum_{k\ge 1}  {\mu}_i(k)k (\log k)^2  <\infty.  
    \end{align}
    Applying Lemma~\ref{lem:integral-cond} with $f(r, j)$ leads to $\widetilde{\bE}^{\uparrow}_{x,i}[I]<\infty$. This completes the proof of \eqref{eqn:aimui}, thereby establishing the desired result.
\end{proof}

\subsection{Proof of Theorem~\ref{thm:asymptotTruncated}}\label{subsec:truncatedMartingale}
\begin{proof}[Proof of Theorem \ref{thm:asymptotTruncated}]
    Applying \eqref{eq:min_-infty} to the critical parameter $\theta^*=1$, we deduce that 
    \begin{equation}\label{eqn: infinite minimum}
        \lim_{t\to +\infty}\inf_{u\in \mathcal{N}_t} X_u(t) = +\infty,\quad 
        \inf_{t\ge 0}\inf_{u\in \mathcal{N}_t} X_u(t) >-\infty, \qquad \mathbf{P}_{x,i}\text{-a.s. on } \mathscr{S}.
    \end{equation}
    Fix $\varepsilon >0$. By \eqref{eqn: infinite minimum}, there exists $b = b(\varepsilon)\ge 0$ such that 
    \[
    \mathbf{P}_{x,i}( \inf_{t\ge 0}\inf_{u\in \mathcal{N}_t} X_u(t) \ge -b\mid\mathscr{S})\ge 1-\varepsilon.
    \]

    We consider the truncated martingale $Z_{\theta^*}^{(b)}$. 
    Recall that, for any fixed $\delta>0$, we have  by Lemma \ref{lem:renewal theorem} that, for all $u$ sufficiently large, $(c_{\text{ren}}-\delta)u \le R_i(u)\le (c_{\text{ren}}+\delta)u$, for all $i\in \mathcal{I}$. We define 
    \begin{equation}\label{eqn:alternative derivative martingale}
        Z'_{\theta^*}(t) :=\sum_{u \in \mathcal{N}_t} X_u(t)e^{- X_u(t)}V_{J_u(t)}.
    \end{equation}
    Then on the event $\mathscr{S} \cap \left\{ \inf_{t\ge 0}\inf_{u\in \mathcal{N}_t} X_u(t) \ge -b\right\}$, when $t$ is sufficiently large, we have
    \begin{equation*}
        (c_{\text{ren}}-\delta)Z'_{\theta^*}(t) + bW_{\theta^*}(t) \le Z_{\theta^*}^{(b)}(t) \le (c_{\text{ren}}+\delta)Z'_{\theta^*}(t) + bW_{\theta^*}(t),
    \end{equation*}
    or equivalently 
    \begin{equation*}
        \frac{1}{c_{\text{ren}}+\delta}(Z_{\theta^*}^{(b)}(t) - bW_{\theta^*}(t)) \le Z'_{\theta^*}(t) \le \frac{1}{c_{\text{ren}}-\delta} (Z_{\theta^*}^{(b)}(t)- bW_{\theta^*}(t)).
    \end{equation*}
    As $t\to \infty$, we have by Theorem \ref{Biggins_theorem} (the critical case) that $W_{\theta^*}(t)$ converges to $0$, ${\mathbf{P}}_{x,i}$-a.s. and that that $Z_{\theta^*}^{(b)}(t)$ converges to a nonnegative limit $Z_{\theta^*}^{(b)}(\infty)$, ${\mathbf{P}}_{x,i}$-a.s. on $\mathscr{S}$.
   Then letting $\delta\to 0$, we conclude that 
   \[
   \lim_{t\to\infty}Z'_{\theta^*}(t) = \frac{1}{c_{\text{ren}}}Z_{\theta^*}^{(b)}(\infty), ~\text{on}~ \left\{\inf_{t\ge 0}\inf_{u\in \mathcal{N}_t} X_u(t)\ge -b\right\} \cap \mathscr{S}.
   \]
    
    We claim that $\lim_{t\to\infty}(Z_{\theta^*}(t)-Z'_{\theta^*}(t)) = 0$, on $\left\{\inf_{t\ge 0}\inf_{u\in \mathcal{N}_t} X_u(t)\ge -b\right\}\cap \mathscr{S}$. 
    By definition, we have 
    \begin{equation}\label{eqn:difference of Z}
        Z_{\theta^*}(t)-Z'_{\theta^*}(t) = -\sum_{u \in \mathcal{N}_t} e^{- X_u(t)}V'_{J_u(t)}(\theta^*).
    \end{equation}
    Since $\mathcal{I}$ is a finite set, $\left\{\frac{V'_i(\theta^*)}{V_i(\theta^*)}\right\}_{i\in\mathcal{I}}$ is a bounded set. Then \eqref{eqn:difference of Z} is dominated by the additive martingale $W(t)$, thus converges to $0$, ${\mathbf{P}}_{x,i}$-a.s.

    Therefore we have already proved, on the event $\left\{\inf_{t\ge 0}\inf_{u\in \mathcal{N}_t} X_u(t)\ge -b\right\}\cap \mathscr{S}$, $c_{\text{ren}}\cdot \lim_{t\to\infty}Z_{\theta^*}(t) = Z_{\theta^*}^{(b)}(\infty)$. Finally,  letting $b\to+\infty$, as  $\lim_{b\to \infty}\mathbf{P}_{x,i}(\inf_{t\ge 0}\inf_{u\in \mathcal{N}_t} X_u(t) \ge -b\big|\mathscr{S})= 1$, 
    we deduce the $\mathbf{P}_{x,i}$-a.s.\ convergence on $\mathscr{S}$ of the derivative martingale $Z$ and  the identity 
	\[
	c_{\text{ren}} \cdot Z_{\theta^*}(\infty) = \lim_{b\to \infty} Z^{(b)}_{\theta^*}(\infty), \quad \bP_{x,i}\text{-a.s.}
	\]

    Moreover, if \eqref{eq:dev-ui} holds, then by Theorem \ref{thm:L1_conv_Zb}, $\bE_{x,i}\left[\frac{Z_{\theta^*}^{(b)}(\infty)}{Z_{\theta^*}^{(b)}(0)}\right] = 1$. The  identity above implies that $\bP_{x,i}(Z_{\theta^*}(\infty)>0)>0$. 
    We claim that $\bP_{x,i}(Z_{\theta^*}(\infty) = 0)$ does not depend on $x$. 
    In fact, $\forall x\in \mathbb{R}$, we have
    \[Z_{\theta^*}(\infty) = \lim_{t\to \infty}Z_{\theta^*}(t) = e^{-x}\lim_{t\to \infty}\sum_{u\in \mathcal{N}_t} (X_u(t)-x)e^{-(X_u(t)-x)}V_{J_u(t)}(\theta^*).\]
    Therefore, 
    \begin{align}
        \bP_{x,i}\left(Z_{\theta^*}(\infty) = 0\right) &= \bP_{x,i}\left(\lim_{t\to \infty}\sum_{u\in \mathcal{N}_t} (X_u(t)-x)e^{-(X_u(t)-x)}V_{J_u(t)}(\theta^*) = 0\right)\\
        &= \bP_{0,i}(Z_{\theta^*}(\infty) = 0).
    \end{align}
    Similar to the analysis in the proof of Theorem~\ref{Biggins_theorem}, denote $\varpi(i):= \bP_{x,i}(Z_{\theta^*}(\infty) = 0)$. Then $(\varpi(i))_{i\in\mathcal{I}}$ will satisfy equation \eqref{eqn:surv_prob}, which yields $\vec{\varpi} = \vec{\mathbf{q}}$. On the other hand, $\mathscr{S}^c \subseteq \{Z_{\theta^*}(\infty) = 0\}$. Consequently, we have $\bP_{x,i}\left(\{Z_{\theta^*}(\infty)>0\} \Delta \mathscr{S}\right) = 0$.
    \end{proof}

\section{FKPP equations and travelling waves}\label{section: FKPP equations and travelling waves}

Recall that, for $\rho\in \RR$ and a function $\Phi\in \TOne$, with $\TOne$ given by \eqref{eq:TOne}, $\Phi$ is called a travelling wave solution of the FKPP equation \eqref{int-eq} with speed $\rho$, if $u(t,x,i):= \Phi(x - \rho t,i)$ is a solution of \eqref{int-eq}, i.e.\ $\Phi$ satisfies the equation 
\begin{align*}
   &\Phi(x,i) = \EE_{x+\rho t,i} \left[\Phi(\chi_i(t),i) \right] +\int_0^{t}  \EE_{x+\rho t,i} \bigg[\beta_i g_i (\Phi(\chi_i(t-s)-\rho s,i)) \\
   &\hspace{6em} +  \sum_{j\ne i} q_{ij}\Phi(U_{ij}+\chi_i(t-s)-\rho s,j) - (q_i +\beta_i) \Phi(\chi_i(t-s)-\rho s,i) \bigg] \dd s.
\end{align*}
By setting $\widehat{\chi}_i(t):= \chi_i(t) +\rho t$, we have equivalently
\begin{align}
    \Phi(x,i)
    =&\, \EE_{x,i} \left[\Phi(\widehat{\chi}_i(t),i) \right] +\int_0^{t}  \EE_{x,i} \bigg[\beta_i g_i (\Phi(\widehat{\chi}_i(t\!-\!s),i)) \\
    &\qquad +  \sum_{j\ne i} q_{ij}\Phi(U_{ij}+\widehat{\chi}_i(t\!-\!s),j) - (q_i +\beta_i) \Phi(\widehat{\chi}_i(t\!-\!s),i) \bigg] \dd s.\label{eq:TW-int}
\end{align}
Note that the generator of $\widehat \chi_i$ is ${\cal A}_i+\rho\frac{\partial}{\partial x}$, with $\mathcal{A}_i$ given by \eqref{eqn:generator Ai}. Then the travelling wave $\Phi$ is a mild solution of the following equations: for $i\in \cI$, 
\begin{align*}\label{eq:TW}
    0 = &\, \left({\cal A}_i+\rho\frac{\partial}{\partial x}\right) \Phi(x,i) + \int_{\RR}  \sum_{j\ne i} q_{ij} \big(\Phi(x+ y, j) - \Phi(x,i)\big) \mathbb{P}(U_{ij}\in \dd y) \\
    &\,+ \beta_i \big( g_i(\Phi(x,i)) - \Phi(x,i) \big) .
\end{align*} 
In this section we prove Theorem~\ref{thm:TW}. 
We first present in Section~\ref{sec:TW-mart} the multiplicative martingale that is related to a travelling wave, and then prove the existence and uniqueness in Sections~\ref{sec:TW-e} and \ref{sec:TW-u} respectively. 

\subsection{Martingale problem}\label{sec:TW-mart}
Let us first build a connection between a branching MAP and a FKPP equation \eqref{int-eq}. 

\begin{proposition}\label{prop:FKPP}
Let $\mathbf{u}_0 \colon \RR \times \mathcal{I} \to [0,1]$ be a measurable function. 
For $t\ge 0$ $x\in \RR$ and $i\in \cI$, we define 
\begin{equation}
    \mathbf{u}(t,x,i) = \mathbf{E}_{x,i} \bigg[ \prod_{u\in \mathcal{N}_t} \mathbf{u}_{0}(X_u(t),J_u(t)) \bigg],
\end{equation}
with the usual convention $\prod_{i\in \emptyset} c_i = 1$.
Then the function $\mathbf{u}$ is a solution of the FKPP equation \eqref{int-eq}. 
\end{proposition}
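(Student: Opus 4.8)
The plan is to imitate the first-event decomposition used in the proof of Theorem~\ref{thm:matrix-exponent}: first derive a renewal-type equation for $\mathbf{u}$ by splitting at the first branching/type-switch of the root, and then transform it into the mild form \eqref{int-eq} by a variation-of-constants computation. As a preliminary I would check that $\mathbf{u}$ is well defined: since $\mathbf{u}_0$ takes values in $[0,1]$, the product $\prod_{u\in\mathcal{N}_t}\mathbf{u}_0(X_u(t),J_u(t))$ lies in $[0,1]$ as soon as $\#\mathcal{N}_t<\infty$, and $\#\mathcal{N}_t<\infty$ $\bP_{x,i}$-a.s.\ because $\bE_{x,i}[\#\mathcal{N}_t]=\sum_{j\in\mathcal{I}}(e^{t\mathcal{M}(0)})_{ij}<\infty$ by Theorem~\ref{thm:matrix-exponent}. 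Thus $\mathbf{u}$ is a measurable $[0,1]$-valued function, which also makes all interchanges of integration below legitimate.

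Next I would decompose at the first time $\zeta$ at which the root (of type $i$) either branches or switches type. Up to $\zeta$ the type stays $i$, so the root moves as the Lévy process $\xi_i$, and $\zeta$ is exponential with parameter $c_i:=\beta_i+q_i$, independent of $(\xi_i(s))_{s\ge0}$; given $\zeta$, the event is a branching with probability $\beta_i/c_i$ and a switch $i\to j$ with probability $q_{ij}/c_i$. On $\{\zeta>t\}$ the root is the only particle alive at time $t$, at $(\xi_i(t),i)$. On $\{\zeta=s\le t\}$, the branching property (applied on the Ulam--Harris tree) gives, conditionally on the position $y=\xi_i(s)$: a contribution $g_i(\mathbf{u}(t-s,y,i))$ if a branching occurred (average over $\mu_i$ of $\mathbf{u}(t-s,y,i)^k$), and a contribution $\int_{\RR}\mathbf{u}(t-s,y+z,j)\,\PP(U_{ij}\in\dd z)$ if a switch $i\to j$ occurred (Markov/branching property at the switch time, then average over $U_{ij}$). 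Collecting the three cases together with the exponential density of $\zeta$ yields, for $c_i>0$,
\[
\mathbf{u}(t,x,i)=e^{-c_it}\,\EE_{x,i}\big[\mathbf{u}_0(\xi_i(t),i)\big]+\int_0^t e^{-c_is}\,\EE_{x,i}\big[\Psi_i(t-s,\xi_i(s))\big]\,\dd s,
\]
where $\Psi_i(r,y):=\beta_i g_i(\mathbf{u}(r,y,i))+\sum_{j\ne i}q_{ij}\int_{\RR}\mathbf{u}(r,y+z,j)\,\PP(U_{ij}\in\dd z)$; the case $c_i=0$ is immediate since then the root neither branches nor switches, and $\Psi_i$ is bounded by $c_i$.

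Finally I would convert this renewal equation into \eqref{int-eq}. Writing $P_i^tf(x)=\EE_{x,i}[f(\xi_i(t))]$ for the Lévy semigroup of $\xi_i$, the renewal representation $\mathbf{u}(t,\cdot,i)=e^{-c_it}P_i^t\mathbf{u}_0(\cdot,i)+\int_0^t e^{-c_is}P_i^s\Psi_i(t-s,\cdot)\,\dd s$ is to be shown equivalent to $\mathbf{u}(t,\cdot,i)=P_i^t\mathbf{u}_0(\cdot,i)+\int_0^t P_i^s\big(\Psi_i(t-s,\cdot)-c_i\,\mathbf{u}(t-s,\cdot,i)\big)\,\dd s$, which after the substitution $s\mapsto t-s$ is precisely \eqref{int-eq} (note $c_i=q_i+\beta_i$ and $\Psi_i$ reproduces the reaction and switching terms there). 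This is the standard variation-of-constants step: substitute the renewal representation once for the factor $\mathbf{u}(t-s,\cdot,i)$ inside the last integral, use the semigroup identity $P_i^sP_i^r=P_i^{s+r}$ and Fubini, and evaluate the elementary inner integrals $\int_0^w e^{-c_i(w-s)}\,\dd s=(1-e^{-c_iw})/c_i$; the terms reorganize and cancel to recover the renewal representation, establishing the equivalence.

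The conceptual content is entirely in the second step, and the only place requiring genuine care is the rigorous use of the branching and strong Markov properties on the marked Ulam--Harris space to factorize $\prod_{u\in\mathcal{N}_t}$ over independent sub-populations born after time $\zeta$, together with the measurability of $\mathbf{u}$ and the independence of the fission/switch clocks from the spatial motion; the third step is a routine (if slightly lengthy) Fubini computation, and no regularity of $\mathbf{u}$ is needed since only the mild/integral form \eqref{int-eq} is claimed.
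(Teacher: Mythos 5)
Your proposal is correct and takes essentially the same route as the paper: the same first-event decomposition at the root's first branching/type-switching time yields the renewal equation, and the same Fubini plus elementary exponential-integral manipulation converts it into the mild form \eqref{int-eq}. The only cosmetic difference is the direction of the final algebra—you substitute the renewal identity into the right-hand side of the mild form and reorganize, whereas the paper re-applies the renewal formula along the path at time $s$ and adds $(q_i+\beta_i)\int_0^t \EE_{x,i}[\mathbf{u}(t-s,\chi_i(s),i)]\,\dd s$ to both sides—but the computations are equivalent.
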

\begin{proof}
Recall that, the initial particle of type $i\in \cI$ moves according to a L\'evy process $\chi_i$. 
By decomposition at the first time when it branches or switches to a different type, we have  
\begin{align*}
    \mathbf{u}(t,x,i)
    =  e^{-(q_i+\beta_i)t}\EE_{x,i} \left[\mathbf{u}_0(\xi_i(t),i) \right]
    & +\int_0^t e^{-(q_i+\beta_i)r} \sum_{j\ne i} q_{ij}\EE_{x,i} \left[\mathbf{u}(t-r,U_{ij}+\chi_i(r),j)\right] \dd r \\
     & + \int_0^t e^{-(q_i+\beta_i)r} \beta_i \EE_{x,i} \left[ g_i (\mathbf{u}(t-r,\chi_i(r),i))\right] \dd r . 
\end{align*}
Applying this expression to $\mathbf{u}(t-s,\chi_i(s), i)$ leads to 
\begin{align}\label{eq:FKPP-p1}
    &\mathbf{u}(t-s,\chi_i(s),i)  =   e^{-(q_i+\beta_i)(t-s)}\EE_{\chi_i(s),i} \left[\mathbf{u}_0(\chi_i(t-s),i) \right] \\
    &  + \int_0^{t-s} e^{-(q_i+\beta_i)r}  \EE_{\chi_i(s),i} \bigg[\beta_i g_i (\mathbf{u}(t-s-r,\chi_i(r),i))+  \sum_{j\ne i} q_{ij} \mathbf{u}(t-s-r,U_{ij}+\chi_i(r),j)\bigg] \dd r. \nonumber
\end{align}
Note that, by the Markov property of a L\'evy process,  we have
\begin{equation}
\EE_{x,i} \left[\mathbf{u}_0(\chi_i(t),i) \right]
  = \EE_{x,i}  \left[ \EE_{\chi_i(s),i} \left[ \mathbf{u}_{0}(\chi_i(t-s),i)  \right] \right], \qquad \forall s\in [0,t]. 
\end{equation}
Taking expectation to \eqref{eq:FKPP-p1} and changing variables with $w= s+r$, with an application of the Markov property, we deduce that 
\begin{align}\label{eq:FKPP-p2}
    &\EE_{x,i} \left[\mathbf{u}(t-s,\chi_i(s),i) \right] =  e^{-(q_i+\beta_i)(t-s)}\EE_{x,i} \left[\mathbf{u}_0(\chi_i(t),i) \right] \\
    &\qquad+ \int_s^{t} \!\!\! e^{-(q_i+\beta_i)(w-s)}  \EE_{x,i} \bigg[\beta_i g_i (\mathbf{u}(t-w,\chi_i(w),i))+  \sum_{j\ne i} q_{ij} \mathbf{u}(t-w,U_{ij}+\chi_i(w),j)\bigg] \dd w. \nonumber
\end{align}
Integrating \eqref{eq:FKPP-p2} over $s$, then adding to $\mathbf{u}(t,x,i)$ and using Fubini's theorem, we conclude that 
\begin{align*}
 & \mathbf{u}(t,x,i)+ (q_i +\beta_i) \int_0^t \EE_{x,i} \left[\mathbf{u}(t-s,\chi_i(s),i) \right]\\
 =& \, \EE_{x,i} \left[\mathbf{u}_0(\chi_i(t),i) \right]
      + \int_0^{t}  \EE_{x,i} \bigg[\beta_i g_i (\mathbf{u}(t-w,\chi_i(w),i))+  \sum_{j\ne i} q_{ij} \mathbf{u}(t-w, U_{ij}+\chi_i(w),j)\bigg] \dd w.
\end{align*}
A change of variables yields the equation. 
\end{proof}

\begin{proposition}[Martingale problem]\label{prop_martingale_problem}
     For any $\theta\in (0,\bar{\theta})$, a function $\Phi_{\theta}\in \TOne$ is a travelling wave with speed $\rho_{\theta}=\frac{\lambda(\theta)}{\theta}$, if and only if 
    \begin{equation}
        \prod_{u\in \mathcal{N}_t} \Phi_{\theta}(X_u(t)+\rho_{\theta} t, J_u(t)), \qquad t\ge 0,
    \end{equation}
    is a martingale under $\mathbf{E}_{x,i}$, for any $i\in \cI$. 
\end{proposition}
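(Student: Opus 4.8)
The plan is to pass to the co-moving frame. Fix $\theta\in(0,\bar\theta)$ and write $\rho:=\rho_\theta=\lambda(\theta)/\theta$. Consider the branching MAP obtained from the present one by adding drift $\rho$ to each motion, i.e.\ each type-$i$ particle now moves according to $\widehat\chi_i(t):=\chi_i(t)+\rho t$ (spatial generator $\mathcal{A}_i+\rho\,\partial_x$), while $Q$, the jumps $U_{ij}$ and the branching mechanism are unchanged; this is again a branching MAP of the class considered here, and its particle positions may be realised as $X_u(t)+\rho t$, $u\in\mathcal{N}_t$, with $(X_u(t))_u$ the positions under $\bP_{x,i}$. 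Set
\[
\widehat{\mathbf u}(t,x,i):=\bE_{x,i}\Big[\prod_{u\in\mathcal{N}_t}\Phi_\theta\big(X_u(t)+\rho t,\,J_u(t)\big)\Big],\qquad M_t:=\prod_{u\in\mathcal{N}_t}\Phi_\theta\big(X_u(t)+\rho t,\,J_u(t)\big),
\]
so $\widehat{\mathbf u}(t,x,i)=\bE_{x,i}[M_t]$, and $0\le M_t\le 1$ gives integrability for free.

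First I would establish the branching Markov identity: for $0\le s\le t$,
\[
\bE_{x,i}\big[M_t\mid\mathcal{F}_s\big]=\prod_{v\in\mathcal{N}_s}\widehat{\mathbf u}\big(t-s,\ X_v(s)+\rho s,\ J_v(s)\big).
\]
This follows from the branching Markov property at time $s$ — conditionally on $\mathcal{F}_s$, the subtrees rooted at the particles $v\in\mathcal{N}_s$ are independent copies of the branching MAP started from $(X_v(s),J_v(s))$ — together with spatial homogeneity and the decomposition $X_u(t)+\rho t=\big(X_v(s)+\rho s\big)+\big(X_u(t)-X_v(s)+\rho(t-s)\big)$ for $u$ a descendant of $v$. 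Taking $s=0$ recovers $\widehat{\mathbf u}(t,x,i)=\bE_{x,i}[M_t]$. The identity shows that $(M_t)_{t\ge0}$ is a $\bP_{x,i}$-martingale for all $x\in\mathbb{R}$ and $i\in\mathcal{I}$ if and only if $\widehat{\mathbf u}(r,\cdot,\cdot)\equiv\Phi_\theta$ for every $r\ge0$: if the latter holds, the right-hand side collapses to $M_s$; conversely the martingale property forces $\widehat{\mathbf u}(t,x,i)=\bE_{x,i}[M_t]=\bE_{x,i}[M_0]=\Phi_\theta(x,i)$ for all $t,x,i$.

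It then remains to show that $\widehat{\mathbf u}(r,\cdot,\cdot)\equiv\Phi_\theta$ for all $r\ge0$ if and only if $\Phi_\theta$ is a travelling wave of speed $\rho$. Applying Proposition~\ref{prop:FKPP} to the drift-$\rho$ branching MAP, the function $\widehat{\mathbf u}$ is a mild solution of the FKPP equation with $\chi_i$ replaced by $\widehat\chi_i$ and initial datum $\widehat{\mathbf u}(0,\cdot,\cdot)=\Phi_\theta$; that is, $\widehat{\mathbf u}$ satisfies the integral equation whose stationary (time-independent) version is exactly \eqref{eq:TW-int}. If $\Phi_\theta$ is a travelling wave of speed $\rho$, then $(r,x,i)\mapsto\Phi_\theta(x,i)$ also solves this integral equation with the same initial datum, so by uniqueness of bounded $[0,1]$-valued mild solutions — a routine Gronwall estimate on the integral equation, using that each $g_i$ is Lipschitz on $[0,1]$ and that the rates $\beta_i,q_{ij}$ are bounded — we get $\widehat{\mathbf u}\equiv\Phi_\theta$. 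Conversely, substituting $\widehat{\mathbf u}(r,\cdot,\cdot)\equiv\Phi_\theta$ into the integral equation satisfied by $\widehat{\mathbf u}$ yields \eqref{eq:TW-int}, so $\Phi_\theta$ is a travelling wave of speed $\rho$. Combining this equivalence with the one from the previous paragraph proves the proposition. The only ingredient not already contained in the excerpt is the uniqueness of bounded mild solutions; beyond that, the argument is bookkeeping with the branching Markov property, where the one subtlety is correctly distributing the deterministic shift $\rho t$ along the genealogy.
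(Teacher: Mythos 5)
Your proof is correct, and its skeleton is the same as the paper's: pass to the co-moving frame, apply Proposition~\ref{prop:FKPP} to the drifted branching MAP $(X_u(t)+\rho_\theta t, J_u(t))$, use the reformulated travelling-wave equation \eqref{eq:TW-int}, and upgrade constancy of expectations to the martingale property via the branching Markov property (your conditional-expectation identity, with the shift $\rho_\theta s$ distributed by spatial homogeneity, is exactly the paper's ``it follows from the Markov property'' step spelled out). The one genuine difference lies in the direction ``travelling wave $\Rightarrow$ martingale'': the paper writes the mild equation from Proposition~\ref{prop:FKPP} with $\Phi_\theta$ already substituted for the unknown $\mathbf{u}(s,\cdot,\cdot)$ inside the time integral, i.e.\ it silently identifies the probabilistic solution with initial datum $\Phi_\theta$ with the stationary solution, whereas you justify this identification by an explicit uniqueness lemma for bounded $[0,1]$-valued mild solutions, proved by Gronwall/Picard iteration using that each $g_i$ is Lipschitz on $[0,1]$ with constant $g_i'(1)=m_i<\infty$ and that the rates $\beta_i,q_i$ are finite. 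That extra lemma is routine, but it is precisely what makes the substitution legitimate, so your argument is marginally longer yet more airtight on this point; the paper's version buys brevity by treating the step as immediate. All remaining ingredients (the drifted system being a branching MAP of the same class, boundedness of the product for integrability, the equivalence of the martingale property with $\widehat{\mathbf{u}}(r,\cdot,\cdot)\equiv\Phi_\theta$) match the paper's proof.
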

\begin{proof}
    Let $\Phi_{\theta}\in\TOne$ be a solution of the martingale problem with speed $\rho_{\theta}$. 
    By the martingale property, for every $x\in \RR$ and $t\ge 0$ we have the identity
    \[
    \Phi_{\theta}(x - \rho_{\theta} t, i) 
    =\mathbf{E}_{x - \rho_{\theta} t,i}  \Big[ \prod_{u\in \mathcal{N}_t} \Phi_{\theta}(X_u(t)+\rho_{\theta} t, J_u(t))\Big]
    =\mathbf{E}_{x,i}  \Big[ \prod_{u\in \mathcal{N}_t} \Phi_{\theta}(X_u(t), J_u(t))\Big].  
    \] 
	It follows from Proposition~\ref{prop:FKPP} that $u(t,x,i):= \Phi_{\theta}(x - \rho_{\theta} t,i) $ is a solution of the FKPP with initial condition $u(0,x,i)=\Phi_{\theta}(x,i)$, so by definition $\Phi_{\theta}$ is a travelling wave. 

    Conversely, let $\Phi_{\theta}$ be a  travelling wave  with speed $\rho_{\theta}$.  Our goal is to prove that $\prod_{u\in \mathcal{N}_t} \Phi_{\theta} ( X_u(t) +\rho_{\theta} t ,J_u(t))$ is a martingale. Applying Proposition~\ref{prop:FKPP} to the branching MAP $(\widehat{X}_u(t) = X_u(t) +\rho_{\theta} t, t\ge 0)$, we have the identity
    \begin{align*}
         \mathbf{E}_{x,i} \bigg[\prod_{u\in \mathcal{N}_t} \Phi_{\theta}( x+\widehat{X}_u(t), J_u(t)) \bigg]  =  \EE_{x,i} \left[\Phi_{\theta}(\widehat{\chi}_i(t),i) \right]  + \int_0^{t}  \EE_{x,i} \bigg[\beta_i g_i (\Phi_{\theta}(\widehat\chi_i(t-s),i))& \\
         +  \sum_{j\ne i} q_{ij} \Phi_{\theta}(U_{ij}+\widehat\chi_i(t-s),j) - (q_i +\beta_i) \Phi_{\theta}(\widehat\chi_i(t-s),i) \bigg] \dd s.&
    \end{align*}
    Due to \eqref{eq:TW-int}, the latter is equal to $\Phi_{\theta}( x,i)$. Therefore, 
    \begin{equation}
        \mathbf{E}_{x,i} \Big[\prod_{u\in \cN_t} \Phi_{\theta}( x+\widehat{X}_u(t), J_u(t)) \Big]=\Phi(x,i)
    \end{equation} 
    is a constant for every $t$. It follows from the Markov property that $     \prod_{u\in \cN_t} \Phi_{\theta}( \widehat{X}_u(t), J_u(t))$ is a martingale under $\mathbf{P}_{x,i}$.  
\end{proof}

\subsection{Proof of existence of travelling waves}\label{sec:TW-e}
We assume \ref{A1} \ref{A2} and show the existence of travelling wave solution with speed $\rho>\frac{\lambda(\theta^*)}{\theta^*}$. 
Recall that, by the convexity of $\lambda$, 
 $\theta \mapsto \frac{\lambda(\theta)}{\theta}$ strictly decreases from $+\infty$ to $\frac{\lambda(\theta^*)}{\theta^*}$ on $(0,\theta^*]$. Therefore, for any $\rho>\frac{\lambda(\theta^*)}{\theta^*}$, there exists a unique $\theta\in (0,\theta^*)$ such that $\rho = \frac{\lambda(\theta)}{\theta}$. 
Recall that $W_{\theta}(t) = e^{-\lambda(\theta)t} \sum_{u\in \mathcal{N}_t} e^{-\theta X_u(t)} V_{J_u(t)}(\theta)$ and $W_{\theta}(\infty) = \lim_{t\rightarrow\infty} W_{\theta}(t)$ almost surely under $\bP_{x,i}$. 

\begin{lemma}\label{lemma_travelling_exist_super}
Suppose $\theta\in (0,\bar{\theta})$ with $\theta \lambda'(\theta) <\lambda(\theta)$ and $\ind{\beta_jm_j>0}\sum_{k\ge 1}(k \log k)\mu_j(k)<\infty$ for all $j\in \cI$. Define a function $\Phi_{\theta}\colon \mathbb{R}\times \cI \to [0,1]$ by $ (x,i)\mapsto \Phi_{\theta}(x,i) = \bE_{x,i} \big[e^{-W_{\theta}(\infty)}\big] = \bE_{0,i}\left[ e^{-e^{-\theta x} W_{\theta}(\infty)} \right]$. Then $\Phi_{\theta}$ is a travelling wave solution with speed $\frac{\lambda(\theta)}{\theta}$.
\end{lemma}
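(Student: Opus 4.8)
The plan is to establish the two defining properties of a travelling wave separately: first that $\Phi_\theta\in\mathcal{T}_1$, and second that $u(t,x,i):=\Phi_\theta(x-\rho_\theta t,i)$, with $\rho_\theta=\lambda(\theta)/\theta$, solves the FKPP equation \eqref{int-eq}. For the second part I would invoke Proposition~\ref{prop:FKPP} with the initial datum $\mathbf{u}_0:=\Phi_\theta$: since $\mathbf{u}(t,x,i):=\mathbf{E}_{x,i}\big[\prod_{v\in\mathcal{N}_t}\Phi_\theta(X_v(t),J_v(t))\big]$ is automatically a solution of \eqref{int-eq}, it is enough to prove the identity
\[
\mathbf{E}_{x,i}\Big[\prod_{v\in\mathcal{N}_t}\Phi_\theta(X_v(t),J_v(t))\Big]=\Phi_\theta(x-\rho_\theta t,i),\qquad t\ge 0 .
\]
(Equivalently one could verify the hypothesis of Proposition~\ref{prop_martingale_problem} directly, but this reduces to the same computation.)

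The key input for the displayed identity is the branching decomposition of the almost surely finite limit $W_\theta(\infty)$. Since $\#\mathcal{N}_t<\infty$ a.s.\ (by Theorem~\ref{thm:matrix-exponent} at $\theta=0$) and $W_\theta$ is a non-negative, hence a.s.\ convergent, martingale, grouping the particles alive at time $t+s$ according to their ancestor in $\mathcal{N}_t$ and letting $s\to\infty$ gives
\[
W_\theta(\infty)=e^{-\lambda(\theta)t}\sum_{v\in\mathcal{N}_t}W_\theta^{(v)}(\infty),
\]
where, conditionally on $\mathcal{F}_t$, the random variables $\big(W_\theta^{(v)}(\infty)\big)_{v\in\mathcal{N}_t}$ are independent and $W_\theta^{(v)}(\infty)$ has the law of $W_\theta(\infty)$ under $\mathbf{P}_{X_v(t),J_v(t)}$. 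Using $\Phi_\theta(y,j)=\mathbf{E}_{y,j}[e^{-W_\theta(\infty)}]$ together with the branching property and this conditional independence,
\[
\mathbf{E}_{x,i}\Big[\prod_{v\in\mathcal{N}_t}\Phi_\theta(X_v(t),J_v(t))\Big]
=\mathbf{E}_{x,i}\Big[\prod_{v\in\mathcal{N}_t}e^{-W_\theta^{(v)}(\infty)}\Big]
=\mathbf{E}_{x,i}\big[e^{-e^{\lambda(\theta)t}W_\theta(\infty)}\big].
\]
Finally, the scaling identity $\Phi_\theta(y,i)=\mathbf{E}_{0,i}[e^{-e^{-\theta y}W_\theta(\infty)}]$ and the relation $\theta\rho_\theta=\lambda(\theta)$ turn the right-hand side into $\Phi_\theta(x-\rho_\theta t,i)$, which is exactly the identity needed to apply Proposition~\ref{prop:FKPP}.

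It remains to check $\Phi_\theta\in\mathcal{T}_1$. Monotonicity of $x\mapsto\Phi_\theta(x,i)$ is immediate from $\Phi_\theta(x,i)=\mathbf{E}_{0,i}[e^{-e^{-\theta x}W_\theta(\infty)}]$, since $\theta>0$ makes $x\mapsto e^{-\theta x}$ decreasing and $W_\theta(\infty)\ge 0$. As $x\to+\infty$ one has $e^{-\theta x}W_\theta(\infty)\to 0$ a.s., so dominated convergence gives $\Phi_\theta(x,i)\to 1$; as $x\to-\infty$ one has $e^{-e^{-\theta x}W_\theta(\infty)}\to\ind{W_\theta(\infty)=0}$ a.s., so $\Phi_\theta(x,i)\to\mathbf{P}_{0,i}(W_\theta(\infty)=0)$. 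Under the standing hypotheses $\theta\lambda'(\theta)<\lambda(\theta)$ and $\sum_{k\ge1}(k\log k)\mu_j(k)<\infty$ for all $j\in\mathcal{I}$, Theorem~\ref{Biggins_theorem} applies and yields $\mathbf{P}_{0,i}\big(\{W_\theta(\infty)>0\}\,\Delta\,\mathscr{S}\big)=0$, whence $\mathbf{P}_{0,i}(W_\theta(\infty)=0)=\mathbf{P}_{0,i}(\mathscr{S}^c)=\bq_i$. Together with the identity of the previous paragraph, this shows that $\Phi_\theta$ is a travelling wave solution with speed $\rho_\theta=\lambda(\theta)/\theta$; the inequality $\rho_\theta>\lambda(\theta^*)/\theta^*$ follows from the strict convexity of $\lambda$ and $\theta\in(0,\theta^*)$.

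The step I expect to be the main obstacle is the rigorous justification of the decomposition of $W_\theta(\infty)$ over $\mathcal{N}_t$: one must verify the conditional independence of the $W_\theta^{(v)}(\infty)$ given $\mathcal{F}_t$, use the a.s.\ finiteness of both $\mathcal{N}_t$ and $W_\theta(\infty)$, and justify interchanging the limit $s\to\infty$ with the finite product and the expectation. Everything else is either a direct consequence of Theorem~\ref{Biggins_theorem} or a routine dominated-convergence and monotonicity argument.
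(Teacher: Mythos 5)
Your proof is correct and follows essentially the same route as the paper: spatial homogeneity gives the scaling identity for $\Phi_\theta$, Theorem~\ref{Biggins_theorem} supplies the boundary values $\bq_i$ and $1$ so that $\Phi_\theta\in\mathcal{T}_1$, the branching decomposition $W_\theta(\infty)=e^{-\lambda(\theta)t}\sum_{v\in\mathcal{N}_t}W_\theta^{(v)}(\infty)$ (conditional independence given $\mathcal{F}_t$) yields the product formula, and Proposition~\ref{prop:FKPP} closes the argument. The point you flag as the main obstacle (rigorously passing the limit $s\to\infty$ through the finite product to get the branching decomposition) is exactly the step the paper also uses, stated without further elaboration.
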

\begin{proof}
First, it follows from the spatial homogeneity of the branching MAP that
\begin{equation}
    \left( W_{\theta}(\infty), \bP_{x,i} \right) \overset{d.}{=} \left( e^{-\theta x} W_{\theta}(\infty), \bP_{0,i} \right).
\end{equation}
This yields that $\bE_{x,i} \big[e^{-W_{\theta}(\infty)}\big] = \bE_{0,i}\left[ e^{-e^{-\theta x} W_{\theta}(\infty)} \right]$ and $\Phi_{\theta}(x,i)$ is well defined.
Moreover,  when $\theta \lambda'(\theta) < \lambda(\theta)$ and $\sum_{k\ge 1} (k \log k) \mu_j(k) < \infty$ for all $j \in \mathcal{I}$, it follows from Theorem \ref{Biggins_theorem} that $W_{\theta}(\infty)$ is non-degenerate and $\bP_{0,i}(W_{\theta}(\infty) = 0) =\bq_i$. Thus we have
\begin{equation}
    \lim_{x\rightarrow -\infty} \Phi_{\theta}(x,i) = \lim_{x\rightarrow -\infty} \bE_{0,i} \left[ e^{-e^{-\theta x} W_{\theta}(\infty)} \right] = \bq_i \mbox{ and } \lim_{x\rightarrow -\infty} \Phi_{\theta}(x,i) = 1.
\end{equation}
Since $\Phi_{\theta}(x,i) =  \bE_{0,i}\left[ e^{-e^{-\theta x} W_{\theta}(\infty)} \right]$ and $W_{\theta}(\infty)$ is non-negative, we get that $x\mapsto \Phi_{\theta}(x,i)$ is non-decreasing in $x$. Thus, $\Phi_{\theta} \in \mathcal{T}_1$.

By the decomposition at time $s\ge 0$, we deduce that, $\bP_{x,i}$-a.s.,
\begin{equation}\label{W_distribution}
    W_{\theta}(\infty) = e^{-\lambda(\theta)s} \sum_{u\in \cN_s} W_{\theta}^{(u)}(\infty),
\end{equation}
where $W_{\theta}^{(u)}(\infty)$ is the limit of the additive martingale for the branching Markov  additive process starting from $(X_u(s), J_u(s))$;  given $\cF_s$, $\{W_{\theta}^{(u)}(\infty): u\in \cN_s\} $ are conditionally independent.

For $t\ge 0$, $x\in \RR$ and $i\in \cI$, with $\Phi_{\theta}$ in the statement, we define
\begin{align}
    u(t,x,i) :&= \Phi_{\theta}\Big(x-\frac{\lambda(\theta)}{\theta}t, i\Big)  =\bE_{0,i} \left[ \exp\left\{ -e^{\lambda(\theta)t-\theta x} W_{\theta}(\infty) \right\} \right]\\
    &=  \bE_{x,i} \left[ \exp\left\{ -e^{\lambda(\theta)t} W_{\theta}(\infty) \right\} \right].
\end{align}
Let $0\le s\le t$, then by \eqref{W_distribution} and the branching property, we get that
\begin{align}
      u(t,x,i) &=\bE_{x,i} \left[\exp\bigg\{ -e^{\lambda(\theta)t} e^{-\lambda(\theta)s} \sum_{u\in \cN_s} W_{\theta}^{(u)}(\infty) \bigg\}\right]\\
     &= \bE_{x,i}\bigg[ \prod_{u\in \cN_s} \bE_{X_{u}(s),J_u(s)} \left[\exp\left\{ -e^{\lambda(\theta)(t-s)} W_{\theta}^{(u)}(\infty) \right\}\right]\bigg]\\
     &= \bE_{x,i} \bigg[\prod_{u\in \cN_s}  u(t-s,X_u(s),J_u(s))\bigg].
\end{align}
In particular, setting $s=t$, we have
\begin{equation}
    u(t,x,i) = \bE_{x,i} \bigg[\prod_{u\in \cN_t}  u(0,X_u(t),J_u(t))\bigg] = \bE_{x,i} \bigg[\prod_{u\in \cN_t}  \Phi_{\theta}(X_u(t),J_u(t))\bigg].
\end{equation}
It follows from Proposition \ref{prop:FKPP} that $u(t,x,i)$ satisfies the FKPP equation. 
Recall that $u(t,x,i) := \Phi_{\theta}\Big(x-\frac{\lambda(\theta)}{\theta}t,i\Big)$, then $\Phi_{\theta}$ is a travelling wave with speed $\frac{\lambda(\theta)}{\theta}$ by definition. 
\end{proof}

Recall that $Z_{\theta^*}(\infty)$ is the limit of the derivative martingale with the critical parameter. 
\begin{lemma}\label{lemma_travelling_exist_critical}
     Suppose $\ind{\beta_jm_j>0}\sum_{k\ge 1}k(\log k)^2\mu_j(k)<\infty$ for all $j\in \cI$. Then the function $\Phi_{\theta^*}(x,i) = \bE_{x,i} \big[e^{-Z_{\theta^*}(\infty)}\big] = \bE_{0,i}\left[ e^{-e^{-\theta x} Z_{\theta^*}(\infty)} \right]$ is a travelling wave solution with speed $\lambda'(\theta^*) = \frac{\lambda(\theta^*)}{\theta^*}$.
\end{lemma}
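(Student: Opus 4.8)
The plan is to follow the scheme used in the proof of Lemma~\ref{lemma_travelling_exist_super}, replacing the additive martingale $W_{\theta}$ by the critical derivative martingale $Z_{\theta^*}$; the genuinely new ingredients are the inputs forced by criticality, which come from the critical case of Theorem~\ref{Biggins_theorem} and from Theorem~\ref{thm:asymptotTruncated}. Throughout, recall that under \ref{A2} we have $\theta^*\lambda'(\theta^*)=\lambda(\theta^*)$, so that $W_{\theta^*}(t)\to 0$ $\bP_{x,i}$-a.s.\ and $Z_{\theta^*}(t)\to Z_{\theta^*}(\infty)\ge 0$ $\bP_{x,i}$-a.s.

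First I would record the spatial-homogeneity identity and check $\Phi_{\theta^*}\in\TOne$. Coupling the branching MAP under $\bP_{x,i}$ with the one under $\bP_{0,i}$ by a global spatial shift by $x$ gives, for every $t\ge 0$,
\[
\big(Z_{\theta^*}(t),\bP_{x,i}\big)\eqdis\big(e^{-\theta^*x}\big(Z_{\theta^*}(t)+x\,W_{\theta^*}(t)\big),\bP_{0,i}\big).
\]
Letting $t\to\infty$, the cross term vanishes because $W_{\theta^*}(t)\to 0$ a.s., while $Z_{\theta^*}(t)\to Z_{\theta^*}(\infty)$ a.s., whence
\[
\big(Z_{\theta^*}(\infty),\bP_{x,i}\big)\eqdis\big(e^{-\theta^*x}Z_{\theta^*}(\infty),\bP_{0,i}\big),
\]
so $\Phi_{\theta^*}(x,i)=\bE_{x,i}[e^{-Z_{\theta^*}(\infty)}]=\bE_{0,i}[e^{-e^{-\theta^*x}Z_{\theta^*}(\infty)}]$ is well defined. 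Since $Z_{\theta^*}(\infty)\ge 0$, the map $x\mapsto\Phi_{\theta^*}(x,i)$ is non-decreasing; it tends to $1$ as $x\to+\infty$, and by dominated convergence tends to $\bP_{0,i}(Z_{\theta^*}(\infty)=0)$ as $x\to-\infty$. Under the hypothesis $\sum_{k\ge1}k(\log k)^2\mu_j(k)<\infty$, Theorem~\ref{thm:asymptotTruncated} yields $\bP_{x,i}(\{Z_{\theta^*}(\infty)>0\}\Delta\mathscr{S})=0$, hence $\bP_{0,i}(Z_{\theta^*}(\infty)=0)=\bq_i$. Thus $\Phi_{\theta^*}\in\TOne$.

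Next I would derive the branching decomposition of $Z_{\theta^*}(\infty)$. Decomposing $Z_{\theta}(t)$ at a time $s\le t$ over $u\in\cN_s$, and writing $Z_{\theta}^{(u)}$, $W_{\theta}^{(u)}$ for the derivative and additive martingales of the independent subpopulation issued from $(X_u(s),J_u(s))$ expressed in that subpopulation's own spatial coordinate, a direct computation gives
\[
Z_{\theta}(t)=\sum_{u\in\cN_s}e^{-\theta X_u(s)-\lambda(\theta)s}\Big[Z_{\theta}^{(u)}(t-s)+\big(X_u(s)+\lambda'(\theta)s\big)W_{\theta}^{(u)}(t-s)\Big].
\]
Taking $\theta=\theta^*$ and $t\to\infty$: since $\cN_s$ is a.s.\ finite and each $W_{\theta^*}^{(u)}(t-s)\to 0$ a.s.\ (critical case of Theorem~\ref{Biggins_theorem}), the cross term vanishes, while $Z_{\theta^*}^{(u)}(t-s)\to Z_{\theta^*}^{(u)}(\infty)$ a.s.\ (Theorem~\ref{thm:asymptotTruncated} applied to each subpopulation). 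Hence $\bP_{x,i}$-a.s.,
\[
Z_{\theta^*}(\infty)=e^{-\lambda(\theta^*)s}\sum_{u\in\cN_s}e^{-\theta^*X_u(s)}Z_{\theta^*}^{(u)}(\infty),
\]
where, given $\cF_s$, the $\{Z_{\theta^*}^{(u)}(\infty):u\in\cN_s\}$ are conditionally independent with $Z_{\theta^*}^{(u)}(\infty)\eqdis(Z_{\theta^*}(\infty),\bP_{0,J_u(s)})$. Finally, setting $u(t,x,i):=\Phi_{\theta^*}\!\big(x-\tfrac{\lambda(\theta^*)}{\theta^*}t,i\big)$, which by the homogeneity identity equals $\bE_{x,i}[\exp\{-e^{\lambda(\theta^*)t}Z_{\theta^*}(\infty)\}]$, I would insert the above decomposition, condition on $\cF_s$, and apply the homogeneity identity to each subpopulation to get $u(t,x,i)=\bE_{x,i}[\prod_{u\in\cN_s}u(t-s,X_u(s),J_u(s))]$; with $s=t$ this gives $u(t,x,i)=\bE_{x,i}[\prod_{u\in\cN_t}\Phi_{\theta^*}(X_u(t),J_u(t))]$, so Proposition~\ref{prop:FKPP} shows $u$ solves \eqref{int-eq}, i.e.\ $\Phi_{\theta^*}$ is a travelling wave with speed $\tfrac{\lambda(\theta^*)}{\theta^*}=\lambda'(\theta^*)$, the last equality being \ref{A2}.

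The only point that is not a routine transcription of the supercritical argument is the treatment of the cross term $\big(X_u(s)+\lambda'(\theta^*)s\big)W_{\theta^*}^{(u)}(t-s)$ in the branching decomposition: its disappearance in the limit hinges on the critical identity $W_{\theta^*}(\infty)=0$, together with the a.s.\ finiteness of $\cN_s$. Everything else — well-definedness, monotonicity, the boundary value $1$ at $+\infty$, and the non-degenerate boundary value $\bq_i$ at $-\infty$ — follows directly from Theorems~\ref{Biggins_theorem} and~\ref{thm:asymptotTruncated} under the hypothesis $\sum_{k\ge1}k(\log k)^2\mu_j(k)<\infty$.
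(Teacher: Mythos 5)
Your proposal is correct and follows essentially the same route as the paper: spatial homogeneity of $Z_{\theta^*}(\infty)$ plus Theorem~\ref{thm:asymptotTruncated} to get $\Phi_{\theta^*}\in\TOne$, the branching decomposition of $Z_{\theta^*}$ at time $s$ whose cross term $(X_u(s)+\lambda'(\theta^*)s)W_{\theta^*}^{(u)}$ vanishes because $W_{\theta^*}(t)\to 0$ a.s.\ at criticality, and then Proposition~\ref{prop:FKPP} via the product identity, exactly as in the supercritical Lemma~\ref{lemma_travelling_exist_super}. The only difference is that you spell out the details (the shift coupling and the explicit cross-term computation) that the paper leaves implicit, and you state the decomposition as an almost-sure identity rather than in distribution; both are fine.
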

\begin{proof}
Using the argument similar to the proof of Lemma \ref{lemma_travelling_exist_super}, we have that $\Phi_{\theta^*}$ is well defined and $\Phi_{\theta^*}\in\mathcal{T}_1$.
Recall that 
\begin{equation}
    Z_{\theta^*}(t) = e^{-\lambda(\theta^*)t} \sum_{u\in \cN_t} e^{-\theta^* X_u(t)} \left[V_{J_u(t)}(\theta^*)(X_u(t)+\lambda'(\theta^*)t) - V'_{J_u(t)}(\theta^*)  \right]
\end{equation}
and $Z_{\theta^*}(\infty) = \lim_{t\rightarrow\infty} Z_{\theta^*}(t)$ almost surely under $\bP_{x,i}$. 
Therefore, we know that under $\bP_{x,i}$,
\begin{equation}
    Z_{\theta^*}(\infty) \overset{d.}{=} e^{-\lambda(\theta)s} \sum_{u\in \cN_s} \left(Z_{\theta^*}^{(u)}(\infty)+ \lambda'(\theta^*)s W_{\theta^*}^{(u)}(\infty)\right),
\end{equation}
where $W_{\theta^*}^{(u)}(\infty)$ and $Z_{\theta^*}^{(u)}(\infty)$ are the limits of the additive martingale and derivative martingale for the Markov branching additive process starting from $(X_u(s), J_u(s))$, respectively, and given $\cF_s$, $\{(W_{\theta^*}^{(u)}(\infty),Z_{\theta^*}^{(u)}(\infty): u\in \cN_s\} $ are independent. Since
$\lim_{t\rightarrow\infty} W_{\theta^*}(t) = 0$ almost surely, we have
\begin{equation}\label{Z_distribution}
    Z_{\theta^*}(\infty) \overset{d.}{=} e^{-\lambda(\theta)s} \sum_{u\in \cN_s} Z_{\theta^*}^{(u)}(\infty).
\end{equation}
The remaining arguments are very similar to the proof of Theorem \ref{lemma_travelling_exist_super} and we omit the details. 
\end{proof}

\begin{proof}[Proof Theorem \ref{thm:TW}: the existence part]
By Lemmas \ref{lemma_travelling_exist_super} and \ref{lemma_travelling_exist_critical}, it remains to show the non-existence of travelling waves with speed $\rho<\frac{\lambda(\theta^*)}{\theta^*}$.
The proof is an extension of classical arguments (see e.g.\ \cite{Har-FKPP}). By Corollary \ref{thm:speed of Lt} the velocity of 
the leftmost particle is $-\lambda'(\theta^*)$. As $\rho < \frac{\lambda(\theta^*)}{\theta^*}= \lambda'(\theta^*)$, we have $\mathbf{P}_{x,i}$-a.s.
\begin{equation}
     \lim_{t\to \infty} \left(\min_{u\in \mathcal{N}_t} X_u(t) + \rho t \right) = -\infty, \quad \mbox{on } \mathscr{S}. 
\end{equation}
Let $\Phi$ be a solution of the martingale problem with parameter $\rho$, then for every $t\ge 0$, it holds that  
\begin{align}      
    \Phi(x,i) &= \mathbf{E}_{x,i} \bigg[\prod_{u \in  \mathcal{N}_t} \Phi (X_u(t) +\rho t,J_u(t)) \bigg]\\
    &= \mathbf{E}_{x,i} \bigg[ \left(\ind{\mathscr{S}^c} + \ind{\mathscr{S}} \right) \prod_{u \in  \mathcal{N}_t} \Phi (X_u(t) +\rho t,J_u(t)) \bigg]\\
  	&\le \mathbf{q}_i +   \mathbf{E}_{0,i}  \Big[ \ind{\mathscr{S}} \Phi \big(x + \min_{u\in \mathcal{N}_t} X_u(t) +\rho t, J_u(t)\big) \Big].  
\end{align}
Since $\Phi\in\TOne$, we have $\mathbf{P}_{x,i}$-a.s. on $\mathscr{S}$,
\begin{equation}
    \limsup_{t\to \infty} \Phi \big(x + \min_{u\in \mathcal{N}_t} X_u(t) +\rho t, J_u(t)) 
    \le \lim_{y\to -\infty} \max_{j\in \cI}\Phi(y,j) = \max_{j\in\cI}\bq_j. 
\end{equation}
By Fatou's lemma, we deduce that $\Phi(x,i) \leq \bq_i + (1-\bq_i) \max_{j\in\cI}\bq_j$ for all $x\in \RR$ and $i\in \cI$. 
Choose $i_0\in\cI$ such that $\bq_{i_0} = \max_{j\in\cI} \bq_j$. Thus, $\Phi(x,i_0) \leq \bq_{i_0} + (1-\bq_{i_0}) \bq_{i_0} = 1 - (1-\bq_{i_0})^2 < 1$ for all $x\in\RR$,  which  contradicts to the fact that $\Phi\in \TOne$. 
\end{proof}

\subsection{Proof of uniqueness of travelling waves}\label{sec:TW-u}
We now prove the uniqueness of travelling waves, for which we further assume that the branching MAP is spectrally negative; this assumption means that, for all $i\in\cI$, the L\'evy process $\chi_i$ has no positive jumps (its L\'evy measure satisfies $\Lambda_i(0,\infty)=0$) and $\mathbb{P}(U_{ij}\leq 0) = 1$ for all $i,j\in\cI$. 
In this proof, we follow the general ideas of \cite{KYPRIANOU04, YangRen11}.

\begin{proof}[Proof Theorem \ref{thm:TW}: the uniqueness part]
We treat the supercritical ($\theta \in (0,\bar{\theta})$) and  critical ($\theta=\theta^*$) regimes separately. 

\paragraph{Supercritical regime} Let $\theta \in (0,\bar{\theta})$ with $\rho_{\theta} = \frac{\lambda(\theta)}{\theta} > \frac{\lambda(\theta^*)}{\theta^*}.$
    Consider a branching MAP under law $\mathbf{P}_{0,i}$ and define the space-time barrier 
    \begin{equation}
        \Gamma^{(x,\rho_{\theta})} := \{ (y,t)\in \mathbb{R}\times \mathbb{R}_+: y+\rho_{\theta} t = x  \} \mbox{ for } x\geq 0.
    \end{equation}
    When a particle crosses this barrier, it is stopped immediately. According to \cite[Theorem 46.2]{Sato99}, a L\'evy process without positive jumps that crosses an upper barrier from below must do so continuously. Consequently, a particle governed by such a process is stopped upon hitting the barrier.  Let $C(x,\rho_{\theta})$ denotes the random collection of particles stopped at the barrier, then is a \emph{stopping line}, as it satisfies the fundamental property that, if $u\in C(x,\rho_{\theta})$, then $v\notin C(x,\rho_{\theta})$ for all $v\prec u$; see \cite{ Chauvin91,Jagers89}. 
    Then we have the following properties.
    \begin{itemize}
        \item For any $u\in C(x,\rho_{\theta})$ and $v\prec u$, we have $v\notin C(x,\rho_{\theta})$. By \eqref{eq:min_-infty}, we have that  $\lim_{t\rightarrow\infty} (\min_{u\in \cN_t} X_u(t) + \rho_{\theta} t) = \infty$ $\bP_{x_0,i}$-a.s. on $\mathscr{S}$, then all lines of descendants from the ancestor hit $\Gamma^{(x,\rho_{\theta})}$ for all $x>x_0$.
        \item $\lim_{x\rightarrow\infty} \inf \left\{|u|: u\in C(x,\rho_{\theta}) \right\} = \infty$, where $|u|$ is the generation of the particle $u$. This follows from the fact that the number of offspring in the $n$-th generation is finite almost surely, and their life lengths are finite almost surely. Therefore, $\max\left\{ X_u(s): |u|= n, s\leq d_u \right\}$ must be finite almost surely. Then, we get that $\inf \left\{|u|: u\in C(x,\rho_{\theta}) \right\}$ tends to infinity as $x\rightarrow\infty$.
        \item For $x<y$ and any $u\in C(y,\rho_{\theta})$, there exists a unique $v\in C(x,\rho_{\theta})$ such that $v\preceq u$.
        This follows from the first point and the fact that when the particles hit the barrier. Let $\cF_{\Gamma^{(x,\rho_{\theta})}}$ be the natural filtration generated by ancestral types and spatial paths receding from particles at the moment they hit $\Gamma^{(x,\rho_{\theta})}$ (see \cite{Chauvin91,Jagers89} for precise definition). Therefore, for $x<y$, $\cF_{\Gamma^{(x,\rho_{\theta})}} \subset\cF_{\Gamma^{(y,\rho_{\theta})}}$.
        \item Let $C_j(x,\rho_{\theta})$ denotes the random collection of type $j$ particles stopped at the barrier $\Gamma^{(x,\rho_{\theta})}$. We claim that  $(\#C_j(x,\rho_{\theta}): j\in \cI)$ forms a supercritical continuous-time branching process where $x\ge 0$ plays the role of time. First, we use $\bP_{(x,t),i}$ to denote the law of branching MAP with the initial particle starting from space-time position $(x,t)$ of type $i$. Then it follows from the spatial homogeneity of MAP that for $(x_1,t_1), (x_2,t_2)\in \Gamma^{(x,\rho_{\theta})}$, we have $(\#C_j(y,\rho_{\theta}):j\in\cI; \bP_{(x_1,t_1),i}) \overset{d.}{=} (\#C_j(y,\rho_{\theta}):j\in\cI; \bP_{(x_2,t_2),i})$ with $y> x$ and any $i\in\cI$.  Moreover, the law of $(\#C_j(y,\rho_{\theta}):j\in\cI; \bP_{(x_1,t_1),i})$ only depends on the ``time" $y-x$ between the stopping lines and the type $i$ of the initial particle. 
        
        We use $\sigma_u^x$ to denote the stopping time when the particle $u$ hit the barrier $\Gamma^{(x,\rho_{\theta})}$. 
        By the strong Markov branching property on stopping lines (see \cite{Chauvin91,Neveu87} for branching Brownian motions and \cite{Kyprianou99} for branching L\'{e}vy processes), we know that given $\cF_{\Gamma^{(x,\rho_{\theta})}}$, the processes $(\#C_j(y,\rho_{\theta}):j\in\cI, y> x;\bP_{(X_u(\sigma_u^x),\sigma_u^x),J_u(\sigma_u^x)})$ for $u\in \cN_t$ are independent. Therefore, $(\#C_j(x,\rho_{\theta}): j\in \cI; \bP_{0,i})$ satisfies the definition on \cite[Page 200]{Athreya1968} and forms a continuous-time branching process indexed by $x\ge 0$. 
        
        Moreover, since all lines of descendants from the ancestor will hit the barrier $\Gamma^{(x,\rho_{\theta})}$, we know that the  process along the stopping line will survive on the event $\mathscr{S}$. Thus, the continuous-time branching process $(\#C_j(x,\rho_{\theta}): j\in \cI, x\ge 0)$ is  supercritical.
    \end{itemize}
    
    Define $m_{ij}^{\theta}(x) := \bE_{0,i} [\#C_j(x,\rho_{\theta})]$, $i,j\in\cI$.  Since the matrix $Q$ of the branching MAP is irreducible, we have that the matrix $M^{\theta}(x) := (m_{ij}^{\theta}(x))_{i,j\in\cI}$ is irreducible. By the property of a continuous-time branching process, there exists a positive matrix $A^{\theta}$ such that $M^{\theta}(x) = e^{xA^{\theta}}$. By the Perron-Frobenius theorem, $A^{\theta}$ has a positive PF eigenvalue $\eta^{\theta}$ with corresponding right and left eigenvectors $\vec{h}^{\theta} := (h^{\theta}_i:i\in \cI)$ and $\vec{\pi}^{\theta} := (\pi^{\theta}_i:i\in\cI)$ such that $\langle \vec{\pi}^{\theta}, \vec{h}^{\theta} \rangle = \langle \vec{\pi}^{\theta}, \boldsymbol{1} \rangle = 1$. Therefore,
    \begin{equation}\label{eq:PF_continuous_time_GWP}
        \sum_{j\in\cI} m_{ij}^{\theta}(x) e^{-\eta^{\theta}x} h^{\theta}_j = h^{\theta}_i, \mbox{ for all } i\in\cI.
    \end{equation}
    Furthermore, by the Kesten-Stigum theorem (see, for example, \cite[Theorem 2.1]{Georgii03}), we know that  
    \begin{equation}\label{eq:Kesten-Stigum_GWP_super2}
        \lim_{x\rightarrow\infty} \#C_i(x,\rho_{\theta}) e^{-\eta^{\theta} x} = \pi^{\theta}_i \widetilde{W}^{\theta}, \quad \mbox{ $\bP_{0,i}$-a.s.}
    \end{equation}
    for some non-negative random variable $\widetilde{W}^{\theta}$.
    
    We next define for $x\ge 0$
    \begin{equation}\label{martingale_W_stopping}
        \widetilde{W}_x(\rho_\theta) := \sum_{u\in C(x,\rho_{\theta})} e^{-\theta X_u(\sigma_u^x) - \lambda(\theta)\sigma_u^x} V_{J_u(\sigma_u^x)}(\theta) = \sum_{i\in \cI} \#C_i(x,\rho_{\theta}) V_i(\theta) e^{-\theta x}. 
    \end{equation}
    Recall by Theorem~\ref{Biggins_theorem} that $W_{\theta}(\infty) = \lim_{t\to \infty} W_{\theta}(t)$ holds  $\bP_{0,i}$-a.s.\ and in $\mathcal{L}^1(\bP_{0,i})$. 
    By the strong Markov property at the stopping line, we have that
    \begin{align}
        &\bE_{0,i}\left[ W_{\theta}(\infty) | \cF_{\Gamma^{(x,\rho_{\theta})}} \right] = \lim_{t\rightarrow\infty} \bE_{0,i}\left[ W_{\theta}(t) | \cF_{\Gamma^{(x,\rho_{\theta})}} \right]\\
        &= \lim_{t\rightarrow\infty}\bigg(\sum_{u\in C(x,\rho_{\theta}),\sigma_u^x \le t} e^{-\theta X_u(\sigma_u^x) - \lambda(\theta)\sigma_u^x} V_{J_u(\sigma_u^x)}(\theta)  +   \sum_{u\in \cN_t, \sigma_u^x > t} e^{-\theta X_u(t) -\lambda(\theta) t} V_{J_u(t)}(\theta)\bigg)\\
        &= \widetilde{W}_x(\rho_\theta), \label{eq:mart-sl}
    \end{align}
    where we used the fact $\{u\in \cN_t: \sigma_u^x>t\} \rightarrow \emptyset$ as $t\rightarrow\infty$.
    Then $(\widetilde{W}_x(\rho_\theta), x\geq 0)$ is a $\bP_{0,i}$-martingale with respect to $\{\cF_{\Gamma^{(x,\rho_{\theta})}}: x\geq 0\}$ and
    \begin{equation}\label{stopping_W_theta}
        W_{\theta}(\infty) = \lim_{x\rightarrow \infty} \widetilde{W}_x(\rho_\theta) = \lim_{x\rightarrow\infty} \sum_{i\in \cI} \#C_i(x,\rho_{\theta}) V_i(\theta) e^{-\theta x}, \quad \mbox{ $\bP_{0,i}$-a.s.\ and in $\mathcal{L}^1(\bP_{0,i})$.}
    \end{equation}
    Note that $m_{ij}^{\theta}(x) = \bE_{0,i} [\#C_j(x,\rho_{\theta})]$. Taking expectation on the both sides of \eqref{martingale_W_stopping} and then letting $x\rightarrow\infty$, we get that
    \begin{equation}\label{eq:limit_continuous_time_GWP}
        \lim_{x\rightarrow\infty} \sum_{j\in \cI} m_{ij}^{\theta}(x) e^{-\theta x} V_j(\theta) = V_i(\theta).
    \end{equation} 
    Therefore, combining \eqref{eq:limit_continuous_time_GWP} with \eqref{eq:PF_continuous_time_GWP}, we have $\eta^{\theta} = \theta$ and $h_i^{\theta} = cV_i(\theta)$.
    Then, \eqref{eq:Kesten-Stigum_GWP_super2} will be
    \begin{equation}\label{eq:Kesten-Stigum_GWP_super}
        \lim_{x\rightarrow\infty} \#C_i(x,\rho_{\theta}) e^{-\theta x} = \pi^{\theta}_i \widetilde{W}^{\theta}. 
    \end{equation}
    By \eqref{stopping_W_theta} again, 
    \begin{equation}\label{eq:relation_W_theta}
        W_{\theta}(\infty) = \sum_{i\in\cI} \frac{h^{\theta}_i}{c} \pi_i^{\theta}\widetilde{W}^{\theta} = \widetilde{W}^{\theta}/c.
    \end{equation}
    
    On the other hand, let $\Phi_{\theta}$ be a travelling wave with speed $\rho_{\theta}$. For $z\in \RR$, define 
    \begin{align}\label{stopping_product_martingale}
        \widetilde{M}_x(z,\rho_{\theta}) &:= \prod_{u\in C(x,\rho_{\theta})} \Phi_{\theta}(z+X_u(\sigma_u^x)+\rho_{\theta} \sigma_u^x, J_u(\sigma_u^x))\\
        &= \exp \bigg\{ \sum_{j\in\cI} \# C_j(x,\rho_{\theta}) \log \Phi_{\theta}(z+x,j) \bigg\}.
    \end{align}
    Then, we claim that $(\widetilde{M}_x(z,\rho_{\theta}), x\geq 0)$    
    is a $\bP_{0,i}$-martingale with respect to $\{\cF_{\Gamma^{(x,\rho_{\theta})}}: x\geq 0\}$. 
    To see this, we define 
    \begin{equation}
        M_t(z,\rho_{\theta}) := \prod_{u\in \cN_t} \Phi_{\theta}(z+X_u(t)+\rho_{\theta} t, J_u(t)), \qquad t\ge 0. 
    \end{equation}
    By Proposition~\ref{prop_martingale_problem}, $(M_t(z,\rho_{\theta}),t\geq 0)$ is a non-negative bounded martingale with respect to $\{ \cF_t:t\geq 0\}$. Therefore, $\bP_{0,i}$-a.s. 
    \begin{equation}
        M_{\infty}(z,\rho_{\theta}) := \lim_{t\rightarrow\infty} M_t(z,\rho_{\theta})
    \end{equation}
    exists and is non-degenerated. Similarly as \eqref{eq:mart-sl}, we deduce by the bounded convergence theorem and the strong Markov property that 
    \begin{align}
        &\bE_{0,i}\left[ M_{\infty}(z,\rho_{\theta}) | \cF_{\Gamma^{(x,\rho_{\theta})}} \right]\\
        &= \lim_{t\rightarrow\infty} \bE_{0,i}\left[ M_{t}(z,\rho_{\theta}) | \cF_{\Gamma^{(x,\rho_{\theta})}} \right]\\
        &= \lim_{t\rightarrow\infty}\bigg(\prod_{u\in C(x,\rho_{\theta}), \sigma_u^x \leq t} \!\!\! \Phi_{\theta}(z\!+\!X_u(\sigma_u^x)\!+\!\rho_{\theta} \sigma_u^x, J_u(\sigma_u^x))  
         \prod_{u\in \cN_t, \sigma_u^x > t} \Phi_{\theta}(z\!+\!X_u(t)\!+\!\rho_{\theta} t, J_u(t))\bigg)\\
        &= \prod_{u\in C(x,\rho_{\theta})} \Phi_{\theta}(z+X_u(\sigma_u)+\rho_{\theta} \sigma_u^x, J_u(\sigma_u^x)) = \widetilde{M}_x(z,\rho_{\theta}),
    \end{align}
    where we also used the fact that $\lim_{t\rightarrow\infty} (\min_{u\in \cN_t} X_u(t) + \rho_{\theta} t) = \infty$, $\bP_{0,i}$-a.s.\ on $\mathscr{S}$ (see \eqref{eq:min_-infty}),
    and that $\lim_{t\rightarrow \infty}\prod_{u\in \cN_t, \sigma_u^x > t} \Phi_{\theta}(z+X_u(t)+\rho_{\theta} t, J_u(t)) = 1$  on $\mathscr{S}^c$.
    Thus, $(\widetilde{M}_x(z,\rho_{\theta}), x\geq 0)$ is a $\bP_{0,i}$-martingale and converges to $M_{\infty}(z,\rho_{\theta})$ in $\mathcal{L}^1(\bP_{0,i})$ and $\bP_{0,i}$-a.s.\ as $x\to\infty$. Therefore, we have
    \begin{equation}\label{stopping_lim_M}
        \lim_{x\rightarrow \infty} -\sum_{j\in\cI} \# C_i(x,\rho_{\theta}) \log \Phi_{\theta}(z+x,j) = -\lim_{x\rightarrow \infty} \log \widetilde{M}_x(z,\rho_{\theta})
        =-\log M_{\infty}(z,\rho_{\theta}). 
    \end{equation}
       By \eqref{stopping_lim_M} and \eqref{eq:Kesten-Stigum_GWP_super}, we have
    $\alpha := \lim_{x\rightarrow\infty} -\sum_{j\in\cI} \pi^{\theta}_j e^{\theta x} \log\Phi_{\theta}(x,j)$ exists.
    Taking expectation in \eqref{stopping_product_martingale}, it follows from the bounded convergence theorem and equations \eqref{eq:Kesten-Stigum_GWP_super} \eqref{eq:relation_W_theta} that
    \begin{align}
        \Phi_{\theta}(z,i) &= \bE_{0,i} \left[\lim_{x\rightarrow \infty}\widetilde{M}_x(z,\rho_{\theta})\right] = \bE_{0,i} \bigg[\lim_{x\rightarrow\infty} \exp \bigg\{ \sum_{j\in\cI} \# C_j(x,\rho_{\theta}) \log \Phi_{\theta}(z+x,j) \bigg\} \bigg]\\
        &= \bE_{0,i} \bigg[\lim_{x\rightarrow\infty} \exp \bigg\{ \sum_{j\in\cI} \pi_j^{\theta} \widetilde{W}^{\theta} e^{\theta x} \log \Phi_{\theta}(z+x,j) \bigg\} \bigg]\\
        &= \bE_{0,i} \bigg[\lim_{x\rightarrow\infty} \exp \bigg\{ \sum_{j\in\cI} \pi_j^{\theta} cW_{\theta}(\infty) e^{\theta x} \log \Phi_{\theta}(z+x,j) \bigg\} \bigg]\\
        &= \bE_{0,i} \left[ \exp\left\{ -\alpha c W_{\theta}(\infty) e^{-\theta z} \right\} \right].
    \end{align}

\paragraph{Critical regime $\rho_{\theta^*} = \lambda'(\theta^*)=\frac{\lambda(\theta^*)}{\theta^*}$.} 
 let $\Phi_{\theta^*}$ be a travelling wave with speed $\rho_{\theta^*}$. 
     Recall that $C_i(x,\rho_{\theta^*})$ denotes the random collection of type $i$ particles stopped at the barrier $\Gamma^{(x,\rho_{\theta^*})}$. Similarly as in the supercritical case, we have that 
    \begin{align}\label{martingale_M_x_critical}
        \widetilde{M}_x(z,\rho_{\theta^*}) = \exp\bigg\{ \sum_{j\in\cI} \#C_j(x,\rho_{\theta^*}) \log \Phi_{\theta^*} (z+x,j) \bigg\}, \qquad x\ge 0
    \end{align}
    is a $\bP_{0,i}$-martingale which converges to $\Phi_{\theta^*}(z,i)$ a.s.\ and in $\mathcal{L}^1(\bP_{0,i})$. 
    
    For $b>0$, let us also add a killing barrier at $\Gamma^{(-b,\rho_{\theta^*})}$ for this branching MAP, which means the truncation as in \eqref{eqn:defTruncatedDerivativeMartingale}. Define $\widetilde{C}_{i}(x,\rho_{\theta^*})$ to be the collection of type $i$ particles that are stopped at the barrier $\Gamma^{(x,\rho_{\theta^*})}$ for the truncated branching MAP and let $\widetilde{C}(x,\rho_{\theta^*}) := \bigcup_{i\in\cI} \widetilde{C}_{i}(x,\rho_{\theta^*})$. 
    Let $\gamma^{(-b,\theta^*)}$ be the event that the branching MAP survives and remains entirely to the right of $\Gamma^{(-b,\rho_{\theta^*})}$, such that the truncation does not take effect on the event $\gamma^{(-b,\theta^*)}$. By \eqref{eq:min_-infty} applied to $\theta^*$, we know that $\bP_{0,i}(\gamma^{(-b,\theta^*)}\mid \mathscr{S}) \rightarrow 1$ as $b\rightarrow\infty$. On the event  $\gamma^{(-b,\theta^*)}$, we have $C_{i}(x,\rho_{\theta^*}) = \widetilde{C}_{i}(x,\rho_{\theta^*})$ and
    \begin{equation}\label{lim_Product_martingale_exists}
        \lim_{x\rightarrow\infty} -\sum_{j\in\cI} \#\widetilde{C}_{j}(x,\rho_{\theta^*}) \log\Phi_{\theta^*}(z+x,j)
    \end{equation}
    exists and is non-negative. Define
    \begin{align}
        Z^{(b)}_x(\rho_{\theta^*})
        &:= \sum_{u\in \widetilde{C}(x,\rho_{\theta^*})} R_{J_u(\tau_u^{(x)})} (X_u(\tau_u^{(x)})+\rho_{\theta^*}\tau_u^{(x)} +b) e^{-\theta^*X_u(\tau_u^{(x)}) -\lambda(\theta^*)\tau_u^{(x)}} V_{J_u(\tau_u^{(x)})}(\theta^*)\\
        &= \sum_{j\in\cI} R_j(x+b) \#\widetilde{C}_{j}(x,\rho_{\theta^*}) e^{-\theta^*x} V_j(\theta^*).
    \end{align}
    Let $\widetilde{\cF}_{\Gamma^{(x,\rho_{\theta^*})}}$ be the natural filtration generated by ancestral type and spatial paths receding from particles at the moment they hit $\Gamma^{(x,\rho_{\theta^*})}$ before meeting $\Gamma^{(-b,\rho_{\theta^*})}$. With similar arguments as in the proof of \eqref{eq:mart-sl}, we deduce by the strong Markov property that, $(Z^{(b)}_x(\rho_{\theta^*}), x\geq 0)$ is a $\bP_{0,i}$-martingale with respect to $\{\widetilde{\cF}_{\Gamma^{(x,\rho_{\theta^*})}}: x\geq 0 \}$ and
    \begin{equation}\label{eq:derivative_cont_time_GWP}
        \lim_{x\rightarrow\infty} \sum_{j\in\cI} R_j(x+b) \#\widetilde{C}_{j}(x,\rho_{\theta^*}) e^{-\theta^*x} V_j(\theta^*) = Z_{\theta^*}^{(b)}(\infty).
    \end{equation}
    The arguments of \eqref{martingale_W_stopping} and \eqref{stopping_W_theta} still work for $\theta = \theta^*$. Therefore, we have
    \begin{equation}\label{eq:additive_cont_time_GWP}
        \lim_{x\rightarrow\infty} \sum_{j\in\cI}\#\widetilde{C}_{j}(x,\rho_{\theta^*}) e^{-\theta^*x} V_j(\theta^*) = 0, \quad \mbox{$\bP_{0,i}$-a.s.}
    \end{equation}
    By \eqref{eq:derivative_cont_time_GWP}, \eqref{eq:additive_cont_time_GWP} and Lemma \ref{lem:renewal theorem}, we get that
    \begin{equation}\label{equation_lim_Cx_theta}
        \lim_{x\rightarrow\infty} \sum_{j\in\cI} c_{ren} x \#\widetilde{C}_{j}(x,\rho_{\theta^*}) e^{-\theta^*x} V_j(\theta^*) = Z_{\theta^*}^{(b)}(\infty), \quad \mbox{$\bP_{0,i}$-a.s.}
    \end{equation}

    Similarly to the arguments for supercritical speed regime,
    we know that  $(\#C_i(x,\rho_{\theta^*}): i\in \cI)_{x\ge 0}$ forms a supercritical continuous-time branching process where $x$ plays the role of time. Again, by the Kesten-Stigum theorem (\cite[Theorem 2.1]{Georgii03}), there is a non-negative vector $\pi^{\theta^*} = (\pi^{\theta^*}_i: i\in\cI)$ with $\langle \pi^{\theta^*}, 1 \rangle = 1$, such that for all $i\in\cI$, $\bP_{0,i}(\lim_{x\rightarrow\infty} \#C_j(x,\rho_{\theta^*})/\#C(x,\rho_{\theta^*}) = \pi^{\theta^*}_j\mid \mathscr{S}) = 1$. Therefore, 
    \begin{equation}\label{eqn:tildeC}
    \lim_{x\rightarrow\infty} \#\widetilde{C}_{j}(x,\rho_{\theta^*})/\#\widetilde{C}(x,\rho_{\theta^*}) = \pi^{\theta^*}_j, \quad \mbox{$\bP_{0,i}$-a.s.\ on $\gamma^{(-b,\rho_{\theta^*})}$.} 
    \end{equation} 
    Applying \eqref{eqn:tildeC} to \eqref{equation_lim_Cx_theta}, we deduce that,
        for all $j\in\cI$,
    \begin{equation}
         \lim_{x\rightarrow\infty} c_{ren} x \#\widetilde{C}(x,\rho_{\theta^*}) e^{-\theta^*x} \langle \pi^{\theta^*}, V(\theta^*) \rangle = Z_{\theta^*}^{(b)}(\infty), \quad \mbox{$\bP_{0,i}$-a.s.\ on $\gamma^{(-b,\rho_{\theta^*})}$.}
    \end{equation}
    Using \eqref{eqn:tildeC} again, we have, for $j\in\cI$,  
    \begin{equation}
         \lim_{x\rightarrow\infty} c_{ren} x \#\widetilde{C}_j(x,\rho_{\theta^*}) e^{-\theta^*x}  = \widetilde{\pi}_j Z_{\theta^*}^{(b)}(\infty), \quad \mbox{$\bP_{0,i}$-a.s.\ on $\gamma^{(-b,\rho_{\theta^*})}$.}
    \end{equation}
    where $\widetilde{\pi}_j = \pi^{\theta^*}_j/ \langle \pi^{\theta^*}, V(\theta^*) \rangle$. Combining this with \eqref{lim_Product_martingale_exists}, we have
    \begin{equation}
        \beta:= \lim_{x\rightarrow\infty} -\sum_{j\in\cI} x^{-1} e^{\theta^* x} \widetilde{\pi}_j \log \Phi_j(x,j)
    \end{equation}
    exists and is positive. 
    It follows that, $\bP_{0,i}$-a.s.\ on $\gamma^{(-b,\rho_{\theta^*})}$, 
       \begin{align}
        & \lim_{x\rightarrow\infty} \sum_{j\in\cI} \#\widetilde{C}_{j}(x,\rho_{\theta^*}) \log\Phi_{\theta^*}(z+x,j) \\
        &= \lim_{x\rightarrow\infty} \sum_{j\in\cI} c_{ren}^{-1} x^{-1} e^{\theta^*x} \widetilde{\pi}_j Z_{\theta^*}^{(b)}(\infty)  \log\Phi_{\theta^*}(z+x,j)\\
        &= c_{ren}^{-1} Z_{\theta^*}^{(b)}(\infty) e^{-\theta^* z} \lim_{x\rightarrow\infty} \sum_{j\in\cI}  \frac{x+z}{x}\frac{1}{x+z}e^{\theta^*(x+z)} \widetilde{\pi}_j   \log\Phi_{\theta^*}(z+x,j)\\
        &= -c_{ren}^{-1} Z_{\theta^*}^{(b)}(\infty) e^{-\theta^* z} \beta .
    \end{align}
    Recalling that $\bP_{0,i}(\gamma^{(-b,\theta^*)}\mid \mathscr{S}) \rightarrow 1$ as $b\rightarrow\infty$ and $\widetilde{M}_x(z,\rho_{\theta^*})$ given by \eqref{martingale_M_x_critical} is an $\mathcal{L}^1$-martingale,  we deduce by the the bounded convergence theorem that
    \begin{align}
        \Phi_{\theta^*}&(z,i) = \bE_{0,i} \left[\lim_{x\rightarrow \infty}\widetilde{M}_x(z,\rho_{\theta^*})\right] = \bE_{0,i} \bigg[ \exp \bigg\{ \lim_{x\rightarrow\infty} \sum_{j\in\cI} \# C_j(x,\rho_{\theta^*}) \log \Phi_{\theta^*}(z+x,j) \bigg\} \bigg]\\ 
        &=  \lim_{b\rightarrow\infty}  \bE_{0,i} \bigg[ \exp\bigg\{ \lim_{x\rightarrow\infty} \sum_{j\in\cI} \#C_{j}(x,\rho_{\theta^*}) \log\Phi_{\theta^*}(z+x,j) \bigg\} \ind{\gamma^{(-b,\rho_{\theta^*})}} +\ind{\mathscr{S}^c} \bigg]\\
        &= \lim_{b\rightarrow\infty} \bE_{0,i} \bigg[ \exp\bigg\{ \lim_{x\rightarrow\infty} \sum_{j\in\cI} \#\widetilde{C}_{j}(x,\rho_{\theta^*}) \log\Phi_{\theta^*}(z+x,j) \bigg\} \ind{\gamma^{(-b,\rho_{\theta^*})}} +\ind{\mathscr{S}^c} \bigg]\\
        &= \lim_{b\rightarrow\infty} \bE_{0,i} \left[ \exp\left\{ -c_{ren}^{-1} Z_{\theta^*}^{(b)}(\infty) e^{-\theta^* z} \beta \right\} \ind{\gamma^{(-b,\rho_{\theta^*})}}+\ind{\mathscr{S}^c} \right]\\
        &= \bE_{0,i} \left[ \exp\left\{ -\beta  Z_{\theta^*}(\infty) e^{-\theta^* z}\right\}  \right],
    \end{align}
    where we used the fact that both $\lim_{x\rightarrow\infty} \sum_{j\in\cI} \#C_{j}(x,\rho_{\theta^*}) \log\Phi_{\theta^*}(z+x,j)$ and $Z_{\theta^*}(\infty)$ are zero $\bP_{0,i}$ on $\mathscr{S}^c$.
    This completes the proof.
\end{proof}

\section{Proof of the spine decomposition theorem}\label{section:proof_spine}
\subsection{The spine decomposition with respect to the additive martingale}\label{section:subsec_proof_spine}
    We use the same notations in Section \ref{subsection_spine_decom} and give the proofs of the results in Section \ref{subsection_spine_decom}. First, we assume that each particle has at least one child and prove Theorems \ref{thm_spine_decomposition} under this assumption. 
    Then, we will prove these results allowing the possibility of no offspring when a particle dies.
    
    Intuitively, We can construct a probability measure $\bP^*_{(x,i)}$ on $\widetilde{\cF}_t$ by
    \begin{align}
        \dd \widetilde{\bP}_{x,i}(\tau, M, \xi) \big{|}_{\widetilde{\cF}_t} &= \dd\PP_{x,i}\left((X_{\xi},J_{\xi})_{t} \right) \dd L^{\beta(J_{\xi})} (\mathbf{n}_t) \prod_{v\prec \xi_t} \mu_{J_{\xi}(d_v)}(A_v)\\
        &\qquad \times\prod_{v\prec \xi_t} \bigg[ \frac{1}{A_v} \prod_{j:vj\in O_v} \dd \bP_{X_{\xi}(d_v), J_{\xi}(d_v)} ((\tau,M)_{t-d_v}^{v,j}) \bigg], \label{def_tildeP}
    \end{align}
    where 
    \begin{itemize}
        \item $\PP_{x,i}$ is the law of the Markov additive process $(X_{\xi}(t),J_{\xi}(t))$ with MAP triplet $((\phi_i)_{i\in\cI}, Q, G)$ starting from $(x,i)$, which gives the motion of the spine, and $(X_{\xi},J_{\xi})_t$ is short for $((X_{\xi}(s),J_{\xi}(s)), 0\leq s\leq t)$;
        \item  Recall that $\mathbf{n} = (n_t:t\geq 0)$ is the counting process of fission times along the spine, i.e.\ $n_s = |\xi_s|$ is the generation of $\xi_s$. We write $L^{\beta(J_{\xi})}$ for the law of a Poisson (Cox) process with rate $\beta(\Theta_t)\dd t$ and $\mathbf{n}_t$ is short for $(n_s:0\leq s\leq t)$.
        \item  $\mu_{J_{\xi}(d_v)}(A_v)$ is the probability that a particle with type $J_{\xi}(d_v)$ has an offspring of size $A_v$;
        \item  $\frac{1}{A_v}$ represents that we choose the spine uniformly and $O_v$ is the set of $v$'s children except the one in the spine;
        \item $(\tau,M)_{t-s}^{v,j}$ stands for the marked subtree rooted at $vj$ shifted by time $d_v$, and the subscript $t-s$ indicates that this time-shifted subtree evolves until time $t-s$. 
    \end{itemize} 

    We have defined in Lemma~\ref{lemma_martingale_zeta} that 
    \begin{equation}
            \zeta_t := \sum_{u\in\cN_t} \left(\prod_{v\prec u} A_v\right) e^{-\theta X_u(t) - \lambda(\theta) t} V_{J_u(t)}(\theta) \ind{\xi_t = u}.
    \end{equation}
    To prove that $(\zeta_t, t\geq 0, \widetilde{\bP}_{x,i})$ is a martingale, we proceeds by decomposing $\zeta_t$ into the product of three parts $\zeta_t^{(1)}$, $\zeta_t^{(2)}$ and $\zeta_t^{(3)}$, which will be defined sequentially as the argument develops. We also need the following definition from \cite{Liu11}.
    \begin{definition}\label{defn:mart-cond}
        Suppose that $(\Omega,\mathcal{H},P)$ is a probability space, $\{\mathcal{H}_t, t\geq 0\}$ is a filtration on $(\Omega,\mathcal{H})$ and $\mathcal{K}$ is a sub-$\sigma$-field of $\mathcal{H}$. A real-valued process $\{U_t, t\geq 0 \}$ on $(\Omega,\mathcal{H},P)$ is called a $P(\cdot|\mathcal{K})$-martingale with respect to $\{\mathcal{H}_t, t\geq 0\}$ if: 
        \begin{enumerate}
            \item[(i)]  It is adapted to $\{\mathcal{H}_t \vee \mathcal{K}, t\geq 0\}$;
            \item[(ii)]  For any $t\geq 0$, $E|U_t|<\infty$;
            \item[(iii)]  For any $t>s$,
        \begin{equation}
            E(U_t|\mathcal{H}_s\vee\mathcal{K}) = U_s, \quad \mbox{a.s.}
        \end{equation}
        \end{enumerate}
        We also say that $\{U_t, t\geq 0\}$ is a martingale with respect to $\{\mathcal{H}_t, t\geq 0\}$ given $\mathcal{K}$. 
    \end{definition}
    
    First, by \cite[Theorem 5.4]{Hardy09}, we have the following lemma.
    \begin{lemma}\label{lemma_martingale_zeta1}
        Suppose that, given the path of the type process $J_{\xi}$, $\mathbf{n} = (n_t: t\geq 0)$ is a Cox process on $\RR_+$ with intensity $\beta(J_{\xi}(t)) \dd t$ along the path of $J_{\xi}(t)$. Then, in the sense of Definition~\ref{defn:mart-cond}, 
        \begin{equation}
            \zeta_t^{(1)} := \prod_{v < \xi_t} m(J_{\xi}(d_v)) \cdot \exp\left\{ -\int_0^t((m-1)\beta)(J_{\xi}(s)) \dd s \right\},
        \end{equation}
        is an $L^{\beta(J_{\xi})}$-martingale with respect to the natural filtration $\{\mathcal{L}_t, t\geq 0\}$ of $\mathbf{n}$ given $\widetilde{\cG}$, where $((m-1)\beta)(i):=(m_i-1)\beta_i$, and $\widetilde{\cG}$ is defined in \eqref{eqn:Sigma_fields_G} as the $\sigma$-field generated by the positions and types of the spine.         
    
    Define a probability measure $L^{(m\beta)(J_{\xi})}$ by
        \begin{equation}
            \frac{\dd L^{(m\beta)(J_{\xi})}}{\dd L^{\beta(J_{\xi})}} \bigg{|}_{\mathcal{L}_t} := \prod_{v < \xi_t} m(J_{\xi}(d_v)) \cdot \exp\left\{ -\int_0^t((m-1)\beta)(J_{\xi}(s)) \dd s \right\}.
        \end{equation}
    Then $L^{(m\beta)(J_{\xi})}$ is the law of a  Cox process with intensity $(m\beta)(J_{\xi}(t))\dd t$. 
    \end{lemma}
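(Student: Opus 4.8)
The plan is to recognise $\zeta^{(1)}$ as the Radon--Nikodym density of a classical Girsanov (change-of-intensity) transformation for a Cox process, carried out conditionally on the trajectory of the spine's type process. First I would condition on $\widetilde{\cG}$, the $\sigma$-field generated by $(X_\xi(s),J_\xi(s))_{s\ge 0}$. Under this conditioning the hypothesis of the lemma says that $\mathbf{n}=(n_t)_{t\ge 0}$ is an inhomogeneous Poisson process with \emph{deterministic} intensity $s\mapsto\beta_{J_\xi(s)}$. Since we are in the regime where every particle has at least one child, $\mu_i(0)=0$ and hence $m_i\ge 1>0$ for all $i\in\cI$, so $\log m_i$ is well defined; moreover $\cI$ is finite and each $\beta_i,m_i$ is finite, so the intensity is bounded on $[0,t]$ and $\int_0^t\big(m_{J_\xi(s)}-1\big)\beta_{J_\xi(s)}\,\dd s\le t\,\max_{i\in\cI}|(m_i-1)\beta_i|<\infty$.

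Next I would note that, since $\mathbf{n}$ jumps precisely at the fission times $d_v$ for $v\prec\xi_t$, one may rewrite
\begin{equation}
    \zeta_t^{(1)}=\exp\!\left\{\int_{(0,t]}\log m_{J_\xi(s)}\,\dd n_s-\int_0^t\big(m_{J_\xi(s)}-1\big)\beta_{J_\xi(s)}\,\dd s\right\}.
\end{equation}
This is exactly the exponential density appearing in the Girsanov theorem for point processes (the version in \cite[Theorem~5.4]{Hardy09}): for an inhomogeneous Poisson process $N$ with bounded intensity $\lambda(\cdot)$ and a bounded measurable $r(\cdot)>0$, the process $\mathcal{E}_t:=\exp\{\int_{(0,t]}\log r(s)\,\dd N_s-\int_0^t(r(s)-1)\lambda(s)\,\dd s\}$ is a mean-one martingale for the natural filtration of $N$, and under $\mathcal{E}_t\cdot\dd P$ the process $N$ is an inhomogeneous Poisson process with intensity $r(s)\lambda(s)$. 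Applying this with $\lambda(s)=\beta_{J_\xi(s)}$ and $r(s)=m_{J_\xi(s)}$ (both bounded on $[0,t]$ by finiteness of $\cI$) shows, conditionally on $\widetilde{\cG}$, that $\zeta_t^{(1)}$ is a mean-one $\{\mathcal{L}_t\}$-martingale and that under the measure with density $\zeta_t^{(1)}$ the process $\mathbf{n}$ has intensity $(m\beta)(J_\xi(s))\,\dd s$. Since this holds for $\widetilde{\cG}$-almost every spine trajectory, the martingale property in the sense of Definition~\ref{defn:mart-cond} (with $\mathcal{K}=\widetilde{\cG}$) follows after integrating out, and the displayed Radon--Nikodym identity in the statement then describes precisely the law of a Cox process directed by $(J_\xi(t))_{t\ge 0}$ with rate $(m\beta)(J_\xi(t))\,\dd t$.

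The only subtleties here are bookkeeping ones, and they are mild: (i) the whole computation must be kept conditional on $\widetilde{\cG}$, since the Poisson intensity is a random functional of the spine, and then averaged; (ii) conditioning on $\widetilde{\cG}$ must not alter the law of $\mathbf{n}$, which is exactly the standing hypothesis of the lemma; and (iii) the integrability required for the change of measure is immediate from the finiteness of $\cI$ and of all $\beta_i,m_i$, which makes the compensator bounded. I do not expect a genuinely hard step: the content is a direct specialisation of the classical change-of-intensity formula for (Cox) point processes to the finitely many bounded rates $m_i\beta_i$.
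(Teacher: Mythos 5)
Your proposal is correct and follows essentially the same route as the paper: the paper simply invokes \cite[Theorem 5.4]{Hardy09}, i.e.\ the change-of-intensity (Girsanov) theorem for point processes applied conditionally on the spine's type path, which is precisely the argument you spell out (rewriting $\zeta^{(1)}$ as the exponential density, checking bounded intensity via finiteness of $\cI$, and working given $\widetilde{\cG}$ in the sense of Definition~\ref{defn:mart-cond}).
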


    Recall that $\Xi_{\theta}(t)$ is defined by \eqref{def_Xi_theta}. Similarly, for any $u\in \cN_t$, we define
    \begin{equation}
        \Xi_{\theta}^{(u)}(t) := e^{ -\theta X_{u}(t) - \lambda(\theta) t + \int_0^t (\beta(m-1))(J_{u}(s)) \dd s } V_{J_{u}(t)}(\theta).
    \end{equation}
    
    \begin{lemma}\label{lemma_martingale_zeta2}
        Define
    \begin{equation}
        \zeta_t^{(2)} := \Xi_{\theta}^{(\xi_t)}(t) = e^{ -\theta X_{\xi}(t) - \lambda(\theta) t + \int_0^t (\beta(m-1))(J_{\xi}(s)) \dd s } V_{J_{\xi}(t)}(\theta), \qquad t\ge 0. 
    \end{equation}
    Then $(\zeta_t^{(2)}, t\geq 0)$ is a $\widetilde{\bP}_{x,i}$-martingale with respect to $(\widetilde{\mathcal{G}}_t, t\geq 0)$. 
    \end{lemma}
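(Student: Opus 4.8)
The plan is to reduce the statement to a single‑particle fact about the MAP governing the spine, and then prove that fact by the same first‑event decomposition used in the proof of Theorem~\ref{thm:matrix-exponent}. First I would record that, by the construction \eqref{def_tildeP} of $\widetilde{\bP}_{x,i}$, the marginal law of the spine path $(X_\xi(t),J_\xi(t))_{t\ge 0}$ under $\widetilde{\bP}_{x,i}$ is exactly $\PP_{x,i}$, the law of a MAP with triplet $((\phi_i)_{i\in\cI},Q,G)$ started from $(x,i)$: in \eqref{def_tildeP} the factor $\dd\PP_{x,i}((X_\xi,J_\xi)_t)$ carries this law, while the Cox process of fissions, the offspring sizes, the uniform spine choices and the off‑spine subtrees are all probability kernels driven by the spine path, so they integrate out to $1$. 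Since $\widetilde{\mathcal G}_t=\sigma(J_\xi(s),X_\xi(s):0\le s\le t)$ is precisely the natural filtration of this MAP, the lemma is equivalent to the first assertion of Lemma~\ref{lemma_spine_MAP}, namely that $\Xi_\theta$ from \eqref{def_Xi_theta} is a $\PP_{x,i}$-martingale.

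Next I would prove the key identity
\[
u(s,k):=\EE_{0,k}\!\left[e^{-\theta\chi_s+\int_0^s(\beta(m-1))(\Theta_r)\,\dd r}\,V_{\Theta_s}(\theta)\right]=e^{\lambda(\theta)s}\,V_k(\theta),\qquad s\ge 0,\ k\in\cI.
\]
To this end, decompose at the first jump time of $\Theta$: starting from type $k$, no jump occurs before time $s$ with probability $e^{-q_k s}$, the Lévy part contributes a factor $e^{\phi_k(\theta)s}$, and the continuously running functional contributes $e^{\beta_k(m_k-1)s}$; if the first jump occurs at some time $r<s$ it is to type $j$ with probability $q_{kj}/q_k$ and adds a spatial displacement $U_{kj}$ with $\EE[e^{-\theta U_{kj}}]=G_{kj}(\theta)$. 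This yields the renewal equation
\[
u(s,k)=e^{(\phi_k(\theta)+\beta_k(m_k-1)-q_k)s}V_k(\theta)+\int_0^s q_k\,e^{(\phi_k(\theta)+\beta_k(m_k-1)-q_k)r}\sum_{j\neq k}\tfrac{q_{kj}}{q_k}G_{kj}(\theta)\,u(s-r,j)\,\dd r,
\]
and, exactly as in the manipulation carried out in the proof of Theorem~\ref{thm:matrix-exponent}, this is equivalent to the matrix ODE $\tfrac{\dd}{\dd s}\vec u(s)=\mathcal{M}(\theta)\vec u(s)$ with $\vec u(0)=\vec V(\theta)$; hence $\vec u(s)=e^{s\mathcal{M}(\theta)}\vec V(\theta)=e^{\lambda(\theta)s}\vec V(\theta)$ by the eigenvector equation $\mathcal{M}(\theta)\vec V(\theta)=\lambda(\theta)\vec V(\theta)$.

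Finally I would deduce the martingale property. For $0\le s\le t$, write $\Xi_\theta(t)=\Xi_\theta(s)\,e^{-\theta(\chi_t-\chi_s)-\lambda(\theta)(t-s)+\int_s^t(\beta(m-1))(\Theta_r)\,\dd r}\,V_{\Theta_t}(\theta)/V_{\Theta_s}(\theta)$; conditionally on $\widetilde{\mathcal G}_s$ and on $\{\Theta_s=k\}$, the Markov additive property (Definition~\ref{def:MAP}) says the increment $(\chi_{s+\cdot}-\chi_s,\Theta_{s+\cdot})$ is a MAP started from $(0,k)$ and independent of $\widetilde{\mathcal G}_s$, so the conditional expectation of the second factor equals $V_k(\theta)^{-1}e^{-\lambda(\theta)(t-s)}u(t-s,k)=1$ by the identity above. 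Thus $\EE_{x,i}[\Xi_\theta(t)\mid\widetilde{\mathcal G}_s]=\Xi_\theta(s)$, and integrability is automatic since $\Xi_\theta\ge 0$ with $\EE_{x,i}[\Xi_\theta(t)]=e^{-\theta x}V_i(\theta)<\infty$ (here we use $\theta\in(0,\bar\theta)$, so $\phi_i(\theta),G_{ij}(\theta)<\infty$). The main work — and the only slightly delicate point — is the renewal/Feynman–Kac computation of the second step: one must correctly account for the transitional jumps $U_{kj}$ producing the factors $G_{kj}(\theta)$ and for the functional $e^{\int(\beta(m-1))(\Theta)}$ running over the sojourns, so that the effective generator is exactly $\mathcal M(\theta)$ rather than the bare MAP exponent $F(\theta)$; the remaining arguments are routine bookkeeping.
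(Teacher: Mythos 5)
Your proposal is correct, and it reaches the same reduction as the paper (the spine's marginal law under $\widetilde{\bP}_{x,i}$ is $\PP_{x,i}$, so the lemma amounts to $\Xi_\theta$ being a $\PP_{x,i}$-martingale), but it proves the single-particle identity by a different mechanism. The paper is shorter at this point: it invokes the many-to-one formula to write $\EE_{x,i}[\Xi_\theta(t)]=\bE_{x,i}[W_\theta(t)]$, uses the already-established constant expectation $\bE_{x,i}[W_\theta(t)]=e^{-\theta x}V_i(\theta)$ from Theorem~\ref{thm:matrix-exponent}, and then concludes with the Markov property of the MAP. You instead re-derive the Feynman--Kac identity $\EE_{0,k}\big[e^{-\theta\chi_s+\int_0^s(\beta(m-1))(\Theta_r)\dd r}V_{\Theta_s}(\theta)\big]=e^{\lambda(\theta)s}V_k(\theta)$ directly by a first-jump decomposition of $\Theta$, which reproduces the matrix renewal equation with generator exactly $\mathcal{M}(\theta)$ (the diagonal $\phi_k(\theta)+\beta_k(m_k-1)-q_k$ plus the off-diagonal $q_{kj}G_{kj}(\theta)$), and then apply the eigenvector relation $\mathcal{M}(\theta)\vec V(\theta)=\lambda(\theta)\vec V(\theta)$; finally you spell out the conditional-expectation step via the increment property in Definition~\ref{def:MAP}. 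What your route buys is self-containedness — it does not rely on the cited many-to-one formula nor on transferring Theorem~\ref{thm:matrix-exponent} through $W_\theta$, and it makes the ``constant expectation from every starting point $+$ Markov property $\Rightarrow$ martingale'' step fully explicit — at the cost of essentially repeating the computation already done in the proof of Theorem~\ref{thm:matrix-exponent}. Both arguments hinge on the same two ingredients (the eigenvalue equation for $\mathcal{M}(\theta)$ and the Markov additive property), and the implicit uses of independence between $\Theta$ and the Lévy pieces (Proposition~\ref{prop:rep of MAP}) and of uniqueness for the linear Volterra equation are routine, so your proof is sound.
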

    \begin{proof}
    Recall that $W_{\theta} (t) = \sum_{u\in \cN_t} e^{-\left(\theta X_u(t)+\lambda(\theta) t\right)} V_{\JJ_u(t)}(\theta)$. By the many-to-one formula (for example, see \cite{Harris-Roberts17}), we have 
    \begin{align}\label{eq:W=Xi}
        \bE_{x,i} [W_{\theta}(t)] =  \EE_{x,i} \left[ e^{-\theta \chi_t-\lambda(\theta)t + \int_0^t (\beta(m-1))(\Theta_s) \dd s} V_{\Theta_t}(\theta)   \right] = \EE_{x,i} [\Xi_{\theta}(t)].
    \end{align}
    Note that $\bE_{x,i} [W_{\theta}(t)] = e^{-\theta x}V_i(\theta)$, hence we have $\EE_{x,i} [\Xi_{\theta}(t) ]= \Xi_{\theta}(0)$. Combining this with the Markov property of a MAP, we deduce that  $(\Xi_{\theta}(t), t\geq 0)$ is a $\PP_{x,i}$-martingale.  
    Since we read from \eqref{def_tildeP} that the law of $(X_{\xi}, J_{\xi})$ under $\widetilde{\bP}_{x,i}$ is $\PP_{x,i}$, the desired result follows.  
    \end{proof}   
    
    The next lemma follows from \cite[Theorem 5.5]{Hardy09}.  
    \begin{lemma}\label{lemma_martingale_zeta3}
        The process
        \begin{equation}
            \zeta_t^{(3)} := \prod_{v\prec  \xi_t} \frac{A_v}{m(J_{\xi}(d_v))}, \qquad t\ge 0,
        \end{equation}
        is a $\widetilde{\bP}_{x,i}(\cdot\mid\widehat{\cG})$-martingale with respect to $\{\widetilde{\cF}_t, t\geq 0\}$.
    \end{lemma}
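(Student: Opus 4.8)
The plan is to read the conditional law of the spine's offspring numbers directly off the construction \eqref{def_tildeP} and then verify the three requirements of Definition~\ref{defn:mart-cond} by an elementary conditioning argument; this is exactly the content of \cite[Theorem~5.5]{Hardy09}, adapted to our type-dependent setting, and together with Lemmas~\ref{lemma_martingale_zeta1} and \ref{lemma_martingale_zeta2} it produces the factorisation $\zeta_t=\zeta_t^{(1)}\zeta_t^{(2)}\zeta_t^{(3)}$ needed for Lemma~\ref{lemma_martingale_zeta}.

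The first step is to record the key distributional fact. Under $\widetilde{\bP}_{x,i}$, conditionally on $\widehat{\cG}$ — which by \eqref{eqn:Sigma_fields_G} records the spine path $(X_\xi,J_\xi)$, the genealogical line $\{\xi_s\}$, and the lifetimes $\{\eta_u:u\prec\xi_t\}$ of the spine's ancestors, hence the fission times $d_v$ and the types $J_\xi(d_v)$ at which the spine branches — the offspring numbers $(A_v)_{v\prec\xi_t}$ form an independent family with $A_v\sim\mu_{J_\xi(d_v)}$. Indeed, in \eqref{def_tildeP} the dependence on $(A_v)_{v\prec\xi_t}$ enters only through $\prod_{v\prec\xi_t}\mu_{J_\xi(d_v)}(A_v)$, through the uniform spine-selection factors $\prod_{v\prec\xi_t}\frac{1}{A_v}$, and through the laws of the non-spine subtrees; summing the latter (each of total mass $1$) and the $A_v$ possible choices of the spine-child at $v$ returns the marginal weight $A_v\cdot\mu_j(A_v)\cdot\frac{1}{A_v}=\mu_j(A_v)$ with $j=J_\xi(d_v)$, and the product structure over $v$ gives the conditional independence. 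Since the fission times along the spine form a Cox process with rate $\beta(J_\xi(\cdot))\le\max_{i\in\cI}\beta_i$, there are $\widetilde{\bP}_{x,i}$-a.s.\ only finitely many of them in $[0,t]$, so $\zeta_t^{(3)}$ is an a.s.\ finite product.

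Next I would check the three conditions. Adaptedness to $\{\widetilde{\cF}_t\vee\widehat{\cG},\,t\ge0\}$ is immediate, as $\zeta_t^{(3)}$ depends only on $\{A_v,J_\xi(d_v),d_v:v\prec\xi_t\}$, all of which are $\widetilde{\cF}_t$-measurable. For integrability, conditioning on $\widehat{\cG}$ and using the distributional fact above together with $\widetilde{\bE}_{x,i}[A_v\mid\widehat{\cG}]=m_{J_\xi(d_v)}$, finite by \eqref{eq:mu}, yields $\widetilde{\bE}_{x,i}[\zeta_t^{(3)}\mid\widehat{\cG}]=\prod_{v\prec\xi_t}\frac{m(J_\xi(d_v))}{m(J_\xi(d_v))}=1$, hence $\widetilde{\bE}_{x,i}[\zeta_t^{(3)}]=1<\infty$; the interchange of expectation with the random but a.s.-finite product is justified by first conditioning on the number of spine fissions in $[0,t]$. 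Finally, for $t>s$ write $\zeta_t^{(3)}=\zeta_s^{(3)}\prod_{v\prec\xi_t,\,s<d_v\le t}\frac{A_v}{m(J_\xi(d_v))}$, where $\zeta_s^{(3)}$ is $\widetilde{\cF}_s\vee\widehat{\cG}$-measurable, while conditionally on $\widetilde{\cF}_s\vee\widehat{\cG}$ the variables $A_v$ with $s<d_v\le t$ remain independent with $A_v\sim\mu_{J_\xi(d_v)}$ (the offspring numbers along the spine are an independent sequence once the spine is conditioned on, so conditioning additionally on the earlier ones does not affect the later ones). Therefore
\[
\widetilde{\bE}_{x,i}\bigl[\zeta_t^{(3)}\mid\widetilde{\cF}_s\vee\widehat{\cG}\bigr]
=\zeta_s^{(3)}\prod_{v\prec\xi_t,\,s<d_v\le t}\frac{\widetilde{\bE}_{x,i}[A_v\mid\widetilde{\cF}_s\vee\widehat{\cG}]}{m(J_\xi(d_v))}=\zeta_s^{(3)},
\]
which is condition (iii) of Definition~\ref{defn:mart-cond}. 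The only genuinely delicate point is the first step: extracting from the mixed construction \eqref{def_tildeP} that the marginal law of each $A_v$ given $\widehat{\cG}$ is the \emph{un-biased} $\mu_{J_\xi(d_v)}$ — the size-biasing is produced precisely by the change of measure $\zeta_t^{(3)}$ and is not already present — and the accompanying conditional independence; everything after that is a routine computation with conditional expectations.
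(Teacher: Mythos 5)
The delicate point you flag at the end is exactly where your argument breaks. You condition on $\widehat{\cG}$, which by \eqref{eqn:Sigma_fields_G} contains the genealogical line $\{\xi_s\}$ — that is, the Ulam--Harris labels, hence the \emph{index} of the spine child at every spine fission — but your derivation of the conditional law of $A_v$ then ``sums the $A_v$ possible choices of the spine-child at $v$''. You cannot do both: summing over the spine-child choice is precisely integrating out a variable you have just conditioned on. From \eqref{def_tildeP} (equivalently \eqref{eq:spine-unif}), the joint weight of $\{A_v=k,\ \text{spine-child index}=\iota\}$ is $\mu_j(k)\cdot\frac1k\,\ind{\iota\le k}$ with $j=J_\xi(d_v)$, so conditioning on the label gives the size-\emph{debiased} law proportional to $\mu_j(k)/k$ on $\{k\ge\iota\}$, not the unbiased $\mu_j$. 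Consequently $\widetilde{\bE}_{x,i}[A_v\mid\widetilde{\cF}_s\vee\widehat{\cG}]\ne m_{J_\xi(d_v)}$ in general, and condition (iii) of Definition~\ref{defn:mart-cond} genuinely fails on positive-probability events whenever the offspring numbers are non-degenerate: for a single type with $\mu(1)=\mu(2)=\tfrac12$ (so $m=\tfrac32$), on the $\widehat{\cG}$-measurable event that the spine child at the first fission carries index $2$ one has $A_\varnothing=2$ a.s., hence $\widetilde{\bE}[\zeta_t^{(3)}\mid\widetilde{\cF}_s\vee\widehat{\cG}]=\tfrac{2}{3/2}=\tfrac43\ne 1=\zeta_s^{(3)}$ for $s<d_\varnothing<t$ there.

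Your computation (and the lemma) is correct once the conditioning field is the one actually used in the cited source and needed downstream: the $\sigma$-field generated by the spine's motion, its type process and its fission times only — e.g.\ $\sigma\big(\widetilde{\cG}_t,\,(n_s)_{s\le t}:\,t\ge 0\big)$ — with the labels left out. Then summing the uniform choice legitimately returns the unbiased $\mu_{J_\xi(d_v)}$, the $(A_v)$ with $d_v>s$ are conditionally independent of $\widetilde{\cF}_s$, and your verification of (i)--(iii) goes through; moreover this smaller field still makes $\zeta_t^{(1)}$ measurable, which is all that the factorization argument for Lemma~\ref{lemma_martingale_zeta} via \cite[Lemma 2.3]{Liu11} requires. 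Note that the paper itself does not compute anything here: it simply invokes \cite[Theorem 5.5]{Hardy09}, where the relevant spine filtration is label-free; by taking the paper's literal $\widehat{\cG}_t$ (which includes $\{\xi_s\}$) and then arguing as if the labels were absent, your proof inherits this mismatch and the central distributional claim, as justified, does not stand. Either restate and prove the claim with the label-free field, or re-derive the conditional law of $A_v$ given the labels — in which case $\zeta^{(3)}$ is not a conditional martingale and the route must change.
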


    Summarizing, we check straightforwardly the identity 
    \begin{equation}
        \zeta_t =  \sum_{u\in\cN_t} \left(\prod_{v\prec u} A_v\right) e^{-\theta X_u(t) - \lambda(\theta) t} V_{J_u(t)}(\theta) \ind{\xi_t = u} = \zeta_t^{(1)} \zeta_t^{(2)} \zeta_t^{(3)}, \qquad t\ge 0.
    \end{equation}
    We are now ready to prove that $(\zeta_t, t\ge 0)$ is a martingale. 
    \begin{proof}[Proof of Lemma~\ref{lemma_martingale_zeta}]
        The proof is similar to that in  \cite[Lemma 2.7] {RS20}.
        $(\zeta_t^{(1)}, t\geq 0)$ is a $\widetilde{\bP}_{x,i}(\cdot|\widetilde{\cG})$-martingale with respect to $\{\widetilde{\cF}_t, t\geq 0\}$, and $(\zeta_t^{(3)}, t\geq 0)$ is a $\widetilde{\bP}_{x,i}(\cdot|\widehat{\cG})$-martingale with respect to $\{\widetilde{\cF}_t, t\geq 0\}$. Note that $\widetilde{\cG} \subset \widehat{\cG}$, and $\zeta_t^{(1)} \in \widehat{\cG}$, $\zeta_t^{(3)} \in \widetilde{\cF}_t$. By \cite[Lemma 2.3]{Liu11}, we have $(\zeta_t^{(1)} \zeta_t^{(3)}, t\geq 0)$ is a $\widetilde{\bP}_{x,i}(\cdot|\widetilde{\cG})$-martingale with respect to $\{\widetilde{\cF}_t, t\geq 0\}$. Note that $\zeta_t^{(2)} \in \widetilde{\cG}$, $\zeta_t^{(1)}\zeta_t^{(3)} \in \widetilde{\cF}_t$. Using \cite[Lemma 2.3]{Liu11} again, we get that $(\zeta_t, t\geq 0)$ is a $\widetilde{\bP}_{x,i}$-martingale with respect to $\{\widetilde{\cF}_t, t\geq 0\}$.
    \end{proof}

    \begin{lemma}
        $W_{\theta}(t)$ is the projection of $\zeta_t$ onto $\cF_t$, that is,
        \begin{equation}
            W_{\theta}(t) = \widetilde{\bP}_{x,i}(\zeta_t \mid \cF_t).
        \end{equation}
    \end{lemma}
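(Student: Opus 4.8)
The statement is an immediate consequence of the uniform spine-choice rule \eqref{eq:spine-unif}, which gives the conditional law of the spine node $\xi_t$ given the underlying marked tree. The key observation is that, for each fixed $u\in\mathbb{U}$, the quantity
\[
g_u(t) := \Big(\prod_{v\prec u} A_v\Big)\, e^{-\theta X_u(t)-\lambda(\theta)t}\, V_{J_u(t)}(\theta)\,\ind{u\in\cN_t}
\]
is $\cF_t$-measurable (it only involves the branching structure along the ancestral line of $u$ and the position/type of $u$ at time $t$), whereas the indicator $\ind{\xi_t=u}$ carries the only spine-dependent randomness. Hence $\zeta_t=\sum_{u}g_u(t)\ind{\xi_t=u}$, where the sum is over the a.s.\ finite set $\cN_t$.

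The steps are then: first, pull the $\cF_t$-measurable factor out of the conditional expectation to obtain
\[
\widetilde{\mathbf{E}}_{x,i}\big[\zeta_t\mid\cF_t\big]
=\sum_{u\in\cN_t} g_u(t)\,\widetilde{\mathbf{P}}_{x,i}(\xi_t=u\mid\cF_t);
\]
since $\cN_t$ is finite $\widetilde{\mathbf{P}}_{x,i}$-a.s.\ (each generation has finitely many particles with finite lifetimes), the interchange of sum and conditional expectation needs no extra justification beyond linearity, or alternatively Tonelli's theorem applies as all terms are nonnegative. Second, substitute \eqref{eq:spine-unif}, namely $\widetilde{\mathbf{P}}_{x,i}(\xi_t=u\mid\cF_t)=\ind{u\in\cN_t}\prod_{v\prec u}A_v^{-1}$, so that the factor $\prod_{v\prec u}A_v$ in $g_u(t)$ cancels exactly against $\prod_{v\prec u}A_v^{-1}$. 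Third, read off
\[
\widetilde{\mathbf{E}}_{x,i}\big[\zeta_t\mid\cF_t\big]
=\sum_{u\in\cN_t} e^{-\theta X_u(t)-\lambda(\theta)t}\,V_{J_u(t)}(\theta)
= W_\theta(t),
\]
which is exactly the claim. (This is the same computation already recorded just below \eqref{def_tildeQ}; here it is stated as a standalone lemma.)

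\textbf{Main point of care.} There is no substantial obstacle; the only thing to be careful about is that \eqref{eq:spine-unif} is indeed the definition of the extended measure $\widetilde{\mathbf{P}}_{x,i}$ on $(\widetilde{\mathcal{T}},\widetilde{\cF})$ — that is, that $\widetilde{\mathbf{P}}_{x,i}$ is constructed so as to make the uniform choice at each fission time along the spine, consistently in $t$ — and that the product $\prod_{v\prec u}$ ranges over strict ancestors of $u$, so the cancellation of the $A_v$ factors is exact. One should also note that $g_u(t)$ vanishes off $\{u\in\cN_t\}$, so the sum over all of $\mathbb{U}$ reduces to the sum over $\cN_t$ and matches the form of $W_\theta(t)$.
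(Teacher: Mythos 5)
Your proof is correct and coincides with the paper's argument: both pull the $\cF_t$-measurable factor $\left(\prod_{v\prec u}A_v\right)e^{-\theta X_u(t)-\lambda(\theta)t}V_{J_u(t)}(\theta)$ out of the conditional expectation and then invoke \eqref{eq:spine-unif} to cancel the $\prod_{v\prec u}A_v$ against $\prod_{v\prec u}A_v^{-1}$. The additional remarks on measurability and finiteness of $\cN_t$ are sound but not essential.
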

    \begin{proof}
        Note that 
        \begin{equation}
            \zeta_t = \sum_{u\in\cN_t} \prod_{v\prec u} A_v e^{-\theta X_u(t) - \lambda(\theta) t} V_{J_u(t)}(\theta) \ind{\xi_t = u}.
        \end{equation}
        Therefore,
        \begin{align}
            \widetilde{\bP}_{x,i}(\zeta_t\mid \cF_t) &= \sum_{u\in\cN_t} \prod_{v\prec u} A_v e^{-\theta X_u(t) - \lambda(\theta) t} V_{J_u(t)}(\theta)  \widetilde{\bP}_{x,i}(  \ind{\xi_t = u} \mid \cF_t)\\
            &= \sum_{u\in\cN_t}  e^{-\theta X_u(t) - \lambda(\theta) t} V_{J_u(t)}(\theta),
        \end{align}
        where we used $\widetilde{\bP}_{x,i}(  \ind{\xi_t = u} \mid \cF_t) = \prod_{v\prec u} \frac{1}{A_v}$. This completes the proof.
    \end{proof}

    \begin{proof}[Proof of Theorem \ref{thm_spine_decomposition}]
    Recall that the probability measure $\widetilde{\bP}^{\theta}_{x,i}$ is defined by
     \begin{equation}
        \frac{\dd \widetilde{\bP}^{\theta}_{x,i}}{\dd \widetilde{\bP}_{x,i}} \bigg{|}_{\widetilde{\cF}_t} = \frac{\zeta_t}{\zeta_0}.
    \end{equation}   
    Then by \eqref{def_tildeP} we have 
    \begin{align}
        \dd \widetilde{\bP}^{\theta}_{x,i}(\tau, M, \xi)|_{\widetilde{\cF}_t} 
        =&\, \frac{\zeta_t^{(2)}}{\zeta_0} \dd\PP_{x,i}\left((X_{\xi}, J_{\xi})_t \right) \zeta_t^{(1)} \dd L^{\beta(J_{\xi})} (\mathbf{n}_t) \zeta_t^{(3)} \prod_{v\prec \xi_t} \mu_{J_{\xi}(d_v)}(A_v) \\
        &\,\prod_{v\prec \xi_t} \Bigg[ \frac{1}{A_v} \prod_{j:vj\in O_v} \dd \bP_{X_{\xi}(d_v), J_{\xi}(d_v)} ((\tau,M)_{t-d_v}^{v,j}) \Bigg]\\
        =&\, \dd\PP_{x,i}^{\theta}\left((X_{\xi}, J_{\xi})_t \right) \dd L^{(m\beta)(J_{\xi})} (\mathbf{n}_t)  \prod_{v\prec \xi_t} \frac{\mu_{J_{\xi}(d_v)}(A_v) A_v}{m(J_{\xi}(d_v))}\\
        &\,\prod_{v\prec \xi_t} \Bigg[ \frac{1}{A_v} \prod_{j:vj\in O_v} \dd \bP_{X_{\xi}(d_v), J_{\xi}(d_v)}((\tau,M)_{t-d_v}^{v,j}) \Bigg].  \label{eqn:decomposition of zeta}
    \end{align}
    Here we also use the  probability measure $\PP_{x,i}^{\theta}$ given by Lemma~\ref{lemma_spine_MAP}, whose proof is postponed to Section~\ref{sec:lem2.10};  in particular, under $\PP_{x,i}^{\theta}$ a MAP has  characteristics given by \eqref{MAP_new_characher}. 
    Then we read from \eqref{eqn:decomposition of zeta} the description of the particle system stated in the theorem.  
    \end{proof}

    Now we prove Theorem \ref{thm_spine_decomposition}, allowing the possibility of no offspring when a particle dies. Our proof follows the construction of the spine decomposition for branching Markov processes given in \cite{RS20}.
    
    We now require a slight modification to the definition of a marked tree with a distinguished spine. 
    Let $\dagger$ be a fictitious node not in $\tau$. Following the construction in \cite[Page 6]{RS20}, a spine $\xi$ on a marked tree $(\tau,M)$ is a subset of $\tau\cup\{\dagger\}$ such that
    \begin{itemize}
        \item $\varnothing\in\xi$ and $|\xi\cap(N_t\cup\{\dagger\})|=1$ for all $t\geq 0$.
        \item If $u\in\xi$ and $v\prec u$, then $v\in\xi$.
        \item If $u\in\xi$ and $A_u>0$, then there exists a unique $j=1,\cdots,A_u$ with $uj\in\xi$. If $u\in\xi$ and $A_u=0$, then $\xi\cap \cN_t$ is empty for all $t\geq d_u$. In this case, we will write $u=\dagger-1$.
    \end{itemize}
    Then we call $d_{\dagger-1}$ the ``lifetime" of the spine. Let $\xi_t := u$ be the unique element $u\in\xi\cap (N_t\cup\{\dagger\})$. Define $D_t:= \{u\in\tau: d_u\leq t, A_u = 0 \}$ be the set of particles that died, before or at time $t$, with no offspring. For a particle $u\in D_t$, two distinct cases arise, each requiring a separate treatment: either $m(J_u(d_u)) = 0$, or $m(J_u(d_u)) > 0$ but $A_u = 0$. Moreover, if $\xi_t = \dagger$, then there is a unique $u\in D_t$ such that $\dagger-1=u$. Similar to \cite[Equation (2.2)]{RS20}, we have
    \begin{align}
        \dd &\widetilde{\bP}_{x,i}(\tau, M, \xi) \big{|}_{\widetilde{\cF}_t}
        = \ind{\xi_t\in\tau} \dd\PP_{x,i}\left((X_{\xi}, J_{\xi})_t \right) \dd L^{\beta(J_{\xi})} (\mathbf{n}_t) \prod_{v\prec \xi_t} \mu_{J_{\xi}(d_v)}(A_v)\\
        &\quad\times\prod_{v\prec \xi_t} \Bigg[ \frac{1}{A_v} \prod_{j:vj\in O_v} \dd \bP_{X_{\xi}(d_v), J_{\xi}(d_v)}((\tau,M)_{t-d_v}^{v,j}) \Bigg] + \ind{\xi_t=\dagger} \dd\PP_{x,i}\left((X_{\xi}, J_{\xi})_t \right) \dd L^{\beta(J_{\xi})} (\mathbf{n}_t) \\
        &\quad\times \prod_{v\prec \dagger-1} \mu_{J_{\xi}(d_v)}(A_v) \prod_{v\prec \dagger-1} \Bigg[ \frac{1}{A_v} \prod_{j:vj\in O_v} \dd \bP_{X_{\xi}(d_v), J_{\xi}(d_v)}((\tau,M)_{t-d_v}^{v,j}) \Bigg].
    \end{align}
    In this case, we also define
    \begin{equation}
        \zeta_t := \sum_{u\in\cN_t} \bigg(\prod_{v\prec u} A_v\bigg) e^{-\theta X_u(t) - \lambda(\theta) t} V_{J_u(t)}(\theta) \ind{\xi_t = u}.
    \end{equation}
    Define
    \begin{align}
        \zeta_t^{(1)} :&= \prod_{v \prec \xi_t} m(J_{\xi}(d_v)) \cdot \exp\left\{ -\int_0^{t\wedge d_{\dagger-1}}((m-1)\beta)(J_{\xi}(s)) \dd s \right\}, \\
        \zeta_t^{(2)} :&= e^{ -\theta X_{\xi}(t\wedge d_{\dagger-1}) - \lambda(\theta) (t\wedge d_{\dagger-1}) + \int_0^{t\wedge d_{\dagger-1}} (\beta(m-1))(J_{\xi}(s)) \dd s } V_{\Theta_{t\wedge d_{\dagger-1}}}(\theta),\\
        \zeta_t^{(3)} :&= \prod_{v\prec  \xi_t} \frac{A_v}{m(J_{\xi}(d_v))}.
    \end{align}
   When $m(J_{\xi}(d_v)) = 0$, it holds that $A_v = 0$ a.s.\ and we use the convention $\frac{0}{0}=1$ such that $\frac{A_v}{m(J_{\xi}(d_v))} \ind{m(J_{\xi}(d_v))= 0} = \ind{m(J_{\xi}(d_v))= 0}$ a.s. In other words, we have
    \begin{align}
         \zeta_t^{(3)} &= \prod_{v\prec  \xi_t} \bigg(\frac{A_v}{m(J_{\xi}(d_v))}\ind{m(J_{\xi}(d_v))\ne 0} + \ind{m(J_{\xi}(d_v))= 0}\bigg) \\
         &  = \ind{\xi_t\in \cN_t} \prod_{v\prec  \xi_t} \frac{A_v}{m(J_{\xi}(d_v))}+ \ind{\xi_t = \dagger}\ind{m(J_{\xi}(d_{\dagger-1}))= 0} \prod_{v\prec  \dagger-1} \frac{A_v}{m(J_{\xi}(d_v))},
    \end{align}
    where the last equality follows from the fact that $A_{\dagger-1} = 0$ when $\xi_t = \dagger$ and $m(J_{\xi}(d_{\dagger-1})) > 0$.
    Therefore, if $\xi_t = \dagger$, then either $m(J_{\xi}(d_{\dagger-1})) = 0$ such that $\zeta^{(1)}_t =0$, or $m(J_{\xi}(d_{\dagger-1})) > 0$ such that $\zeta^{(3)}_t =0$. This yields that $\zeta_t= 0$ on $\{\xi_t = \dagger\}$.
    Then we have the identity
    \begin{equation}
        \zeta_t =  \zeta_t^{(1)} \zeta_t^{(2)} \zeta_t^{(3)}, \qquad t\ge 0. 
    \end{equation} 
    From the discussion about $\zeta_t^{(3)}$ above we deduce Lemma~\ref{lemma_martingale_zeta3} in this case. 
    According to \cite[Example 5.5.5]{Bremaud},  Lemma \ref{lemma_martingale_zeta1} still holds even when $m_j= 0$ for some $j\in\cI$. 
    By Lemma~\ref{lemma_spine_MAP}, it is immediate that $\{\zeta_t^{(2)},t\geq 0\}$ is a $\bP_{x,i}$-martingale with respect to $\widetilde{\cF}_t$.    
   We conclude that Lemma~\ref{lemma_martingale_zeta} holds. The definition of $\zeta_t$ yields that $\widetilde{\bP}_{x,i}^{\theta}(\xi_t\in \cN_t) = 1$ for any $t\geq 0$, which implies that $\widetilde{\bP}_{x,i}^{\theta}(\xi_t\in \cN_t, \forall t\geq 0) = 1$. Therefore,  
    \begin{align}
         \dd \widetilde{\bP}^{\theta}_{x,i}(\tau, M, \xi)|_{\widetilde{\cF}_t} 
        =&\, \dd\PP_{x,i}^{\theta}\left((X_{\xi}, J_{\xi})_t\right) \dd L^{(m\beta)(J_{\xi})} (\mathbf{n}_t)  \prod_{v\prec \xi_t} \frac{\mu_{J_{\xi}(d_v)}(A_v) A_v}{m(J_{\xi}(d_v))}\\
        &\,\prod_{v\prec \xi_t} \Bigg[ \frac{1}{A_v} \prod_{j:vj\in O_v} \dd \bP_{(X_{\xi}(d_v), J_{\xi}(d_v))}((\tau,M)_{t-d_v}^{v,j}) \Bigg].
    \end{align}
    Theorems \ref{thm_spine_decomposition} still holds.

    \subsection{Proof of Lemma \ref{lemma_spine_MAP}}\label{sec:lem2.10}
    \begin{proof}[Proof of Lemma \ref{lemma_spine_MAP}]
        We use the method of extended generator in \cite{PalRol2002}.
        Following \cite{PalRol2002} and the notations in Proposition \ref{prop:rep of MAP}, we first decompose $\chi_t = \chi_t^{(1)}+\chi^{(2)}_t$, where $\{\chi_t^{(1)}, t\ge 0\}$ and $\{\chi^{(2)}_t, t\ge 0\}$ are two independent processes, \[\chi_t^{(1)} := \sum_{n\ge 1}U^{n}_{\Theta(T_n^{-}), \Theta(T_n)}\ind{T_n\le t}\]
        is a pure jump continuous-time Markov process, and $\chi_t^{(2)}$ behaves in law as a L\'evy process with Laplace exponent $\phi_i$, when $\Theta(t)=i$. Thus, the process $(\chi_t^{(1)}, \Theta(t))$ has extended generator 
        \begin{align}
            \mathbf{A}^{(1)}f(x,i) &=\sum_{k\neq i}q_{ik}\int_{-\infty}^{\infty}(f(x+y,k)-f(x,i))\PP(U_{ik}\in \dd y)\\
            &= \sum_{k\in \cI} q_{ik}\int_{-\infty}^{\infty}f(x+y,k)\PP(U_{ik}\in \dd y),
        \end{align}
        with domain $\mathcal{D}(\mathbf{A}^{(1)})$ consisting of absolutely continuous functions for which the above integrals are finite. Notice that we have $q_{ii} = -q_i=-\sum_{k\neq i}q_{ik}$.
        For $i\in\mathcal{I}$, the L\'evy process with Laplace exponent $\phi_i$ has extended generator
        \[\mathbf{A}^{i}g(x) = \ta_ig'(x)+\frac{\sigma^2_i}{2}g''(x)+\int_{-\infty}^{\infty}\left(g(x+y)-g(x)-\ind{|y|<1} yg'(x)\right)\Lambda_i(\dd y),\]
        with domain $\mathcal{C}^2(\mathbb{R})\subset\mathcal{D}(\mathbf{A}^{i})$.
        
        Then the process $\left(\chi^{(2)}_t, \chi^{(1)}_t, \Theta_t \right)$ has extended generator $\mathbf{A}$, such that 
        $\forall g\in \mathcal{C}^2(\mathbb{R})\subset\mathcal{D}(\mathbf{A}^{i})$ and $\forall f\in \mathcal{D}(\mathbf{A}^{(1)})$, 
        \[\mathbf{A}(gf)(x,y,i) = g(x)(\mathbf{A}^{(1)}f)(y,i)+ f(y,i)(\mathbf{A}^{i}g)(x).\]

        Take $\widetilde{g}(x) := e^{-\theta x}, \widetilde{f}(y,i) := e^{-\theta y}V_i(\theta)$ and $\widetilde{h}(x,y,i) := \widetilde{g}(x)\widetilde{f}(y,i) = e^{-\theta x}e^{-\theta y}V_i(\theta)$, we have 
        \begin{align}
             (\mathbf{A}\widetilde{h})(x,y,i) &= e^{-\theta x}\sum_{k\in \mathcal{I}}q_{ik}\int_{-\infty}^{\infty}e^{-\theta(y+s)}V_k(\theta)\PP(U_{ik}\in \dd s)+ e^{-\theta y}V_i(\theta)\phi_i(\theta)e^{-\theta x}\\
             &=e^{-\theta(x+y)}\left([Q\circ G(\theta)\vec{V}(\theta)]_i+\phi_i(\theta)V_i(\theta)\right)\\
             &=e^{-\theta(x+y)}\big(\lambda(\theta)V_i(\theta)-\beta_i(m_i-1)V_i(\theta)\big)\\
             &=\left(\lambda(\theta)-\beta_i(m_i-1)\right)\widetilde{h}(x,y,i).
        \end{align}
        For $t\ge 0$, define
        \begin{align*}
         E^{\widetilde{h}}(t)&:=\frac{\widetilde{h}(\chi^{(2)}(t),\chi^{(1)}(t),\Theta(t))}{\widetilde{h}(\chi^{(2)}(0),\chi^{(1)}(0),\Theta(0))}\exp{\left(-\int_{0}^{t}\frac{(\mathbf{A}\widetilde{h}(\chi^{(2)}(s),\chi^{(1)}(s),\Theta(s))}{\widetilde{h}(\chi^{(2)}(s),\chi^{(1)}(s),\Theta(s))}\dd s\right)}  \\
        & = \frac{e^{-\theta \chi_t}V_{\Theta_t}(\theta)}{e^{-\theta x}V_i(\theta)}\exp{\left(-\int_0^t(\lambda(\theta)-\beta(m-1)(\Theta_s))\dd s\right)}=\frac{\Xi_{\theta}(t)}{\Xi_{\theta}(0)} .
        \end{align*}
        By \cite[Lemma 3.1]{PalRol2002}, $(E^{\widetilde{h}}(t), t\ge 0)$ is a $\mathbb{P}_{x,i}$-local martingale. Since $\mathbb{E}_{x,i}[E^{\widetilde{h}}(t)] = \mathbb{E}_{x,i}[\frac{\Xi_{\theta}(t)}{\Xi_{\theta}(0)}]=1$ by \eqref{eq:W=Xi}, then $E^{\widetilde{h}}$ is a true martingale.
        According to \cite[Lemma~4.1 and Theorem~4.2]{PalRol2002}, define the probability change
        \begin{equation}
        \frac{\dd \PP_{x,i}^{\theta}}{\dd \PP_{x,i}} \bigg{|}_{\mathcal{F}_t^{(\chi,\Theta)}} := \frac{\Xi_{\theta}(t)}{\Xi_{\theta}(0)},
    \end{equation}
        then under $\mathbb{P}^{\theta}_{x,i}$, $(\chi^{(2)},\chi^{(1)},\Theta)$ has extended generator
        \[\widetilde{\mathbf{A}}F := \frac{1}{\widetilde{h}}[\mathbf{A}(F\widetilde{h})-F\mathbf{A}\widetilde{h}],\]
        where $F\in \mathcal{D}(\widetilde{\mathbf{A}}) = \mathcal{D}(\mathbf{A})$.
        Take $F(x,y,i) = g(x)f(y,i)$ for any $g\in \mathcal{D}(\mathbf{A}^{i}), f\in \mathcal{D}(\mathbf{A}^{(1)})$. Recall that  $\widetilde{h}(x,y,i) =\widetilde{g}(x)\widetilde{f}(y,i)$,
        $\widetilde{g}(x)=e^{-\theta x}$ and
        $\widetilde{f}(y,i) = e^{-\theta y}V_i(\theta)$.
        Then 
        \begin{align}
            \widetilde{\mathbf{A}}F(x,y,i) &= \frac{1}{\widetilde{h}}\mathbf{A}(F\widetilde{h})(x,y,i)- F(x,y,i)\cdot(\lambda(\theta)-\beta_i(m_i-1))\\
            &= \frac{1}{\widetilde{f}\cdot\widetilde{g}} \left(g\widetilde{g}(\mathbf{A}^{(1)}(f\widetilde{f}))+ f\widetilde{f}(\mathbf{A}^{i}(g\widetilde{g}))\right) - fg(\lambda(\theta)-\beta_i(m_i-1)) \\
            &= g(x)\cdot I_1(y,i) + f(y,i)\cdot I_2(x,i)- f(y,i)g(x)(\lambda(\theta)-\beta_i(m_i-1)),
        \end{align}
        where
        \begin{equation}
            I_1(y,i) = \sum_{k\in\mathcal{I}}\frac{q_{ik}V_k(\theta)}{V_i(\theta)}\int_{-\infty}^{\infty}f(y+s,k)e^{-\theta s}\PP(U_{ik}\in \dd s)
        \end{equation}
        and 
        \begin{align}
            I_2(x,i) &= \ta_i(g'(x)-\theta g(x))+\frac{\sigma_i^2}{2}(g''(x)-2\theta g'(x)+\theta^2 g(x))\\
            &\quad+\int_{-\infty}^{\infty}\left(g(x+s)e^{-\theta s}-g(x)-\ind{|s|<1}s(g'(x)-\theta g(x)\right)\Lambda_i(\dd s).
        \end{align}
        Moreover, with notations in \eqref{MAP_new_characher}, we define $\widetilde{\mathbf{A}}_i$ and $\widetilde{\mathbf{A}}^{(1)}$ as follows: for any $g\in \mathcal{D}(\widetilde{\mathbf{A}}^{i})=\mathcal{D}(\mathbf{A}^{i}) $ and $ f\in \mathcal{D}(\widetilde{\mathbf{A}}^{(1)}) = \mathcal{D}(\mathbf{A}^{(1)})$,
        \begin{align*}          
        \widetilde{\mathbf{A}}^{i}g(x) &:= \widetilde{\ta}_ig'(x)+\frac{\widetilde{\sigma}^2_i}{2}g''(x)+\int_{-\infty}^{\infty}\left(g(x+y)-g(x)-\ind{|y|<1} yg'(x)\right)\widetilde{\Lambda}_i(\dd y),\\
        \widetilde{\mathbf{A}}^{(1)}f(x,i) &:=\sum_{k\neq i}\widetilde{q}_{ik}\int_{-\infty}^{\infty}(f(x+y,k)-f(x,i))\PP(\widetilde{U}_{ik}\in \dd y)\\
        &= \sum_{k\in \cI} \widetilde{q}_{ik}\int_{-\infty}^{\infty}f(x+y,k)\PP(\widetilde{U}_{ik}\in \dd y).
        \end{align*}
        Then by straightforward computation, we have $I_1(y,i) = \widetilde{\mathbf{A}}^{(1)}f(y,i)+(q_{ii}-\widetilde{q}_{ii})f(y,i)$, and $I_2(x,i) = \widetilde{\mathbf{A}}_i g(x)$.
        Therefore,
        \begin{align}
            &\hspace{1.5em}\widetilde{\mathbf{A}}F(x,y,i) \\
            &=g(x)(\widetilde{\mathbf{A}}^{(1)}f)(y,i)+f(y,i)(\widetilde{\mathbf{A}}^{i}g)(x)+f(y,i)g(x)((q_{ii}-\widetilde{q}_{ii})+\phi_i(\theta)-\lambda(\theta)+\beta_i(m_i-1))\\
            &=g(x)(\widetilde{\mathbf{A}}^{(1)}f)(y,i)+f(y,i)(\widetilde{\mathbf{A}}^{i}g)(x), 
        \end{align}
        where the last equality is deduced from the definition of PF eigenvector $\vec{V}(\theta)$ of $\mathcal{M}(\theta)$:
        \[
        \Big((q_{ii}-\widetilde{q}_{ii})+\phi_i(\theta)-\lambda(\theta)+\beta_i(m_i-1)\Big) V_i(\theta)= [\mathcal{M}(\theta)\vec{V}(\theta)]_i-\lambda(\theta)V_i(\theta)=0.
        \] 
        This implies that, under $\PP_{x,i}^{\theta}$, $(\chi,\Theta)$ is a MAP with characteristics given in the lemma.
    \end{proof}

\subsection{The spine decomposition with respect to the truncated derivative martingale}\label{sec:spine-2}
    We now prove Proposition~\ref{prop:spine-decomp-2}, the spine decomposition used in Section~\ref{subsec: L1_conv_Zb}. The proof is analogous to that for the additive martingale, but with the spine's movement now governed by a MAP conditioned to stay positive (non-negative). For simplicity, we assume that each particle has at least one child; the extension to allow extinction can be treated in a similar way as what we have done in the additive martingale case. 
    Here we fix $\theta = \theta^*$ and let $\widehat{X}_{\xi}(t) = \theta^* X_{\xi}(t)+\lambda(\theta^*)t$, $\widehat{\chi}_t = \theta^*\chi_t+\lambda(\theta^*)t$. 

    Recall the change of measure defined in \eqref{eqn:branching-Conditioned}. We have already explained that the spine $(\widehat{X}_{\xi}, J_{\xi})$ has the law of a MAP conditioned to stay positive given by \eqref{eq:MAP-cond}.

    We still define $\zeta^{(1)}$ as in Lemma~\ref{lemma_martingale_zeta1} and $\zeta^{(3)}$ as in Lemma~\ref{lemma_martingale_zeta3}.  Analogously to $\zeta^{(2)}$ in Lemma~\ref{lemma_martingale_zeta2}, we define 
    \begin{equation}
        \zeta_t^{(2)\uparrow} := R_{J_{\xi}(t)}(\widehat{X}_{\xi}(t)+b)\ind{\inf_{0\le s \le t}\widehat{X}_{\xi}(s) \ge -b}\zeta_t^{(2)}.
    \end{equation}
    where $\left(R_j(x), j\in \mathcal{I}, x\in\RR_+\right)$ is defined in \eqref{eq:R-mart}, as the renewal function of $(\widehat{X}_{\xi}, J_{\xi})$ under $\widetilde{\bP}_{x,i}^{\theta^*}$. 
    By Lemma~\ref{lemma_martingale_zeta2} and the definition of $\PP^{\uparrow}_{x,i}$ in \eqref{eq:MAP-cond}, the process $(\widehat{X}_\xi,J_\xi)$ under $\widetilde{\bP}^{\uparrow}_{x,i}$ has the same law as $(\chi, \Theta)$ under  $\PP^\uparrow_{x,i}$, i.e.\ it is a MAP conditioned to stay above $-b$. By Lemma~\ref{lemma_martingale_zeta2} again, $(\zeta_t^{(2)}, t\geq 0)$ is a $\widetilde{\bP}_{x,i}$-martingale with respect to $(\widetilde{\mathcal{G}}_t, t\geq 0)$, therefore $(\zeta_t^{(2)\uparrow}, t\geq 0)$ is a $\widetilde{\bP}_{x,i}^{\uparrow}$-martingale with respect to $(\widetilde{\mathcal{G}}_t, t\geq 0)$.

    Then we set  
    \begin{equation}
        \begin{aligned}
            \zeta_t^{\uparrow} &:= \zeta^{(1)}_t\zeta_t^{(2)\uparrow}\zeta^{(3)}_t\\
            &= \sum_{u\in\cN_t} \bigg(\prod_{v\prec u} A_v\bigg) R_{J_u(t)}(\widehat{X}_{\xi}(t)+b)\ind{\inf_{0\le s \le t}(\widehat{X}_{\xi}(s))\ge -b}e^{-\widehat{X}
            u(t)} V_{J_u(t)}(\theta^*) \ind{\xi_t = u}.
        \end{aligned}
    \end{equation}
    Then $(\zeta_t^{\uparrow}, t\geq 0, \widetilde{\bP}_{x,i})$ is a martingale with respect to $\{\widetilde{\cF}_t, t\geq 0\}$.
    So we can define
    \begin{equation}
        \left.\frac{\dd \widetilde{\bP}^{\uparrow}_{x,i}}{\dd \widetilde{\bP}_{x,i}}\right|_{\widetilde{\mathcal{F}}_t} := \frac{\zeta_t^{\uparrow}}{\zeta_0^{\uparrow}}.  
    \end{equation}
    Similarly as \eqref{eqn:decomposition of zeta}, we can decompose 
    \begin{align*}
        \dd \widetilde{\bP}^{\uparrow}_{x,i}(\tau, M, \xi)|_{\widetilde{\cF}_t}
        =&\, \zeta_t^{(2)\uparrow} \dd\PP_{x,i}\left((X_{\xi}, J_{\xi})_t \right) \zeta_t^{(1)} \dd L^{\beta(J_{\xi})} (\mathbf{n}_t) \zeta_t^{(3)} \prod_{v\prec \xi_t} \mu_{J_{\xi}(d_v)}(A_v) \\
        &\, \prod_{v\prec \xi_t} \bigg[ \frac{1}{A_v} \prod_{j:vj\in O_v} \dd \bP_{X_{\xi}(d_v), J_{\xi}(d_v)} ((\tau,M)_{t-d_v}^{v,j}) \bigg]\\
        =&\, \dd\PP_{x,i}^{\uparrow}\left((X_{\xi}, J_{\xi})_t \right) \dd L^{(m\beta)(J_{\xi})} (\mathbf{n}_t)  \prod_{v\prec \xi_t} \frac{\mu_{J_{\xi}(d_v)}(A_v) A_v}{m(J_{\xi}(d_v))}\\
        &\,\prod_{v\prec \xi_t} \bigg[ \frac{1}{A_v} \prod_{j:vj\in O_v} \dd \bP_{X_{\xi}(d_v), J_{\xi}(d_v)}((\tau,M)_{t-d_v}^{v,j}) \bigg].
    \end{align*}
    This matches the description the branching MAP under $\widetilde{\bP}^\uparrow_{x,i}$ in Proposition~\ref{prop:spine-decomp-2}. 

\bibliographystyle{abbrv}
\bibliography{MBAP}

\section*{Acknowledgement}
    This work is partially supported by 
   the National Key R\&D Program of China (grant 2022YFA1006500) and 
    National Natural Science Foundation of China (Grant Nos.\ 12288201 and 12301169). 
\end{document}